\newtheorem{theorem}{Theorem}[section]
\newtheorem{lemma}[theorem]{Lemma} 
\newtheorem{remark}[theorem]{Remark} 
\newtheorem{definition}[theorem]{Definition} 
\newtheorem{corollary}[theorem]{Corollary}
\def\da{\downarrow}
\def\A{\mathcal{A}}
\def\K{\mathcal{K}}
\def\M{\mathbb{M}}
\def\a{\alpha}
\def\o{\omega}
\title{On model theory of covers of algebraically closed fields}
\begin{document}
  
\thanks{Research of the first author was partially supported by grant 40734 of the Academy of Finland}
\thanks{Research of the second author was supported by Finnish National Doctoral Programme in Mathematics and its Applications}

 \maketitle

\begin{center}
Tapani Hyttinen \\
Department of Mathematics, University of Helsinki \\
P.O. Box 68, 00014, Finland \\
tapani.hyttinen@helsinki.fi\\
\vspace{5 mm}
 Kaisa Kangas \\
Department of Mathematics, University of Helsinki \\
P.O. Box 68, 00014, Finland \\ 
kaisa.kangas@helsinki.fi \\
\vspace{5 mm}
\end{center}

\begin{abstract}
We study covers of the multiplicative group of an algebraically closed field as quasiminimal pregeometry structures and prove that they satisfy the axioms for Zariski-like structures presented in  \cite{lisuriart}, section 4. 
These axioms are intended to generalize the concept of a Zariski geometry into a non-elementary context.
In the axiomatization, it is required that for a structure $\M$, there is, for each $n$, a collection of subsets of $\M^n$, that we call the \emph{irreducible sets}, satisfying certain properties.
These conditions are generalizations of some qualities of irreducible closed sets in the Zariski geometry context.
They state that some basic properties of closed sets (in the Zariski geometry context) are satisfied and that specializations behave nicely enough. They also ensure that there are some traces of Compactness even though we are working in a non-elementary context. 
\end{abstract}

\vspace{5 mm}
\begin{center}
Mathematics Subject Classification: 12L12, 03C60  \\
Key words: model-theoretic algebra, covers of algebraically closed fields
\end{center}  
  
\section{Introduction}

We will be studying covers of the multiplicative group of an algebraically closed field, which are algebraic structures defined as follows.
 
\begin{definition}
Let $V$ be a vector space over $\mathbb{Q}$ and let $F$ be an algebraically closed field of characteristic $0$.
A \emph{cover} of the multiplicative group of $F$ is a structure represented by an exact sequence 
\begin{displaymath}
0 \to K \to V \to F^{*} \to 1,
\end{displaymath}
where the map $V \to F^{*}$ is a surjective group homomorphism from $(V,+)$ onto $(F^{*},\cdot)$ with kernel $K$.
We will call this map exp.
\end{definition}

We will consider a cover as a structure $V$ in the language $\mathcal{L}=\{0, +, f_q, R_{+}, R_0\}_{q \in \mathbb{Q}}$, where $V$ consists of the elements in the vector space, $0$ is a constant symbol denoting the zero element of the vector space $V$, $+$ is a binary function symbol denoting addition on $V$, and for each $q \in \mathbb{Q}$, $f_q$ is a unary function symbol denoting scalar multiplication by the number $q$.
The symbol $R_+$ is a ternary relation symbol interpreted so that $R_+(v_1, v_2, v_3)$ if and only if $\textrm{exp}(v_1)+\textrm{exp}(v_2)=\textrm{exp}(v_3)$, and $R_0$ is a binary relation symbol interpreted so that $R_0(v_1, v_2)$ if and only if $\textrm{exp}(v_1)+\textrm{exp}(v_2)=0$.
Note that field multiplication is definable using vector space addition.
 
However, for the sake of readability, we will be using the concepts of a vector space $V$ (called the \emph{cover}) and a field $F$ together with the usual algebraic notation when expressing statements about the structure.
If $v=(v_1, \ldots, v_n) \in V^n$, we write $\textrm{exp}(v)$ for $(\textrm{exp}(v_1), \ldots, \textrm{exp}(v_n)) \in F^n$.
For a set $X \subseteq F$, we write $\textrm{log }X$ for the set $\{v \in V \, | \, \textrm{exp}(v) \in X\}$.

Covers have been previously studied in the context of model theory by B. Zilber (\cite{Zilber}) and by L. Smith in her Ph.D. thesis (\cite{Lucy}).
The first-order theory of cover structures is complete, submodel complete, superstable and admits elimination of quantifiers (\cite{ZilUusi}, \cite{Zilber}).
Moreover, with an additional axiom (in $L_{\omega_1 \omega}$) stating $K \cong \mathbb{Z}$, the class is categorical in uncountable cardinalities. 
This was originally proved in \cite{Zilber} but an error was later found in the proof and corrected in \cite{bays}.
Throughout this article, we will make the assumption $K \cong \mathbb{Z}$.

We continue the work done in \cite{Lucy} and study the cover as a quasiminimal class. Introduction of these cover structures to model theory
was the starting point of the study of quasiminimal pregeometries,
and to understand these, it is important to understand the geometries in examples,
and cover structures provide a most natural starting point for this.
As our main result, we prove that smooth curves on the cover satisfy the axioms for a Zariski-like structure, which we presented in \cite{lisuriart} (see also \cite{lisuri}).  
However, to make notation simpler and the proofs more readable, we first give the result for the cover itself (which can be seen as a very simple curve) and then point out that the same arguments hold in the more general case.

Our study of Zariski-like structures was inspired by the concept of \emph{Zariski geometry}, introduced by E. Hrushovski and B. Zilber in \cite{HrZi} as a generalization of the Zariski topology of an algebraically closed field.
One of the results in \cite{HrZi} is that in a non locally modular, strongly minimal set in a Zariski geometry, an algebraically closed field can be interpreted. 
This result plays an important role in Hrushovski's proof of the geometric Mordell-Lang Conjecture (\cite{Hr}), where model-theoretic ideas were applied to solve a problem from arithmetic geometry.

In \cite{lisuriart} (see also \cite{lisuri}), we presented \emph{Zariski-like} quasiminimal structures as a generalization of Zariski geometries to a non-elementary context. The Zariski geometry axioms are given in terms of closed sets, but we decided to formulate the axioms for Zariski-like structures using properties of irreducible closed sets (which, for simplicity, we call just irreducible sets).
In the axiomatization, it is required that for a quasiminimal pregeometry structure (in the sense of \cite{monet}) $\M$, there is, for each $n$, a collection of subsets of $\M^n$, that we call the \emph{irreducible sets}, having certain qualities that generalize some properties of irreducible closed sets in a Zariski geometry.
There are nine axioms altogether, and they are listed in the beginning of section 5.
 
In \cite{HrZi}, it is pointed out that for finding a field, the full axiomatization for a Zariski geometry is not needed but it suffices that the structure satisfies certain weaker assumptions (Assumptions 6.6. in \cite{HrZi}).
Some of our axioms for a Zariski-like structure come from these assumptions rather than from the axiomatization (Z0)-(Z3) for Zariski geometries given in the very beginning of \cite{HrZi}.
In our axiomatization, axioms (1)-(6) are to give meaning to the key axioms  (7)-(9).
They list some very basic properties that irreducible closed sets are expected to satisfy in a context resembling Zariski geometry. The axioms (7) and (8) come from Assumptions 6.6. in \cite{HrZi} and state that specializations behave nicely enough. Axiom (9) brings back some traces of Compactness to the non-elementary context.

In \cite{lisuriart} (see also \cite{lisuri}), we showed that a group  can be interpreted in a Zariski-like structure given that the canonical pregeometry obtained from the bounded closure operator is non-trivial.
The axiomatization comes into play in the non locally modular case, as in non-trivial locally modular pregeometries, groups can be always found
as shown in \cite{lisuriart} (see also \cite{lisuri}). 
In a forthcoming paper, the second author will show that in the non
locally modular case an algebraically closed field can be interpreted
assuming that the structures do not interpret so called non-classical groups
(see \cite{HLS}). Of course,
in the case of the cover, it is clear that there always is a group
and a field present, but we recover them from the geometry alone, which is important in itself.
The properties of the groups that can be interpreted
in non-elementary cases depend on 
whether non-classical groups (see \cite{HLS}) exist, which is an open question.
This is discussed in more detail in \cite{HLS}.

The main reason for presenting the concept of Zariski-like
quasiminimal structures was to provide a context where we hope to be able to classify non-elementary geometries.
We introduced these structures as generalizations of Zariski geometries, and indeed, all Zariski geometries are Zariski-like (see Example 4.7 in \cite{lisuriart} or Example 4.7 in \cite{lisuri}).
It is also easy to construct artificial Zariski-like structures as in e.g. Example 4.8 in \cite{lisuriart} (Example 4.8 in \cite{lisuri}).
 Moreover, we would expect that structures similar to the cover presented in this article would be Zariski-like.
 Zilber's pseudo-exponentiation (see \cite{pseudo}), in particular, is an interesting candidate.
 In \cite{quantum}, Zilber discusses non-commutative quantum tori.
These structures arise from an algebra generated by the ``operators" $U$, $V$, $U^{-1}$ and $V^{-1}$, satisfying the relation $VU=qUV$ for some $q \in \mathbb{C}$, and are inspired by the quantum harmonic oscillator.
The structure is a Zariski geometry in case $q$ is a root of unity.
In the case that $q$ is of infinite order, it might be a Zariski-like structure.
   
Zilber himself suggest analytic Zariski structures as the right
level of generality in which the analysis can be made but that
appeared too difficult for us and thus our approach is more modest one.   
   
To be able to formulate the axioms of a Zariski-like structure for the cover, we need a concept of an irreducible set.
A suitable notion can be found utilizing the PQF-topology, originally presented in \cite{Lucy}.
It is obtained by taking as basic closed sets the sets definable by positive, quantifier-free formulae.
The desired properties are then expressed in terms of the irreducible PQF-closed sets definable over $\emptyset$ (after adding countably many symbols to the language).
The main result, i.e. that the cover is Zariski-like, is proved in section 5.
We then go on pointing out that all the arguments presented also apply for any curve $C$ on the cover as long as $\textrm{exp}(C)$ is a smooth algebraic curve.

In the arguments in section 5, we utilize our previous result from \cite{lisuriart}, namely that an independence calculus can be developed for abstract elementary classes (AECs) satisfying certain conditions.
Thus, we need to be able to construct for the theory of covers a model class that can be viewed as such an AEC.
Moreover, in applying the independence calculus, it is important that dimensions in the model theoretic sense can be obtained from the topological properties of the cover. 
In the earlier sections of this paper, we provide all the results enabling this. 
In section 2, we introduce the PQF-topology and present the properties of PQF-closed sets needed in the latter sections. These results have been previously given in \cite{Lucy}. 
 In section 3, we define a concept of dimension for the PQF-closed sets using their topological properties.
We then show that to be able to determine the dimension of a PQF-closed set $C$, it suffices to look at its image under the map exp.
In \cite{lisuriart}, we gave quasiminimal classes (in the sense of \cite{monet}) as an example of AECs satisfying the conditions required for our independence calculus.
Thus, in section 4, we view the cover as a quasiminimal pregeometry structure.
Moreover, we show that the pregeometry involved can be obtained from the bounded closure operator (bcl).
This plays an important role when applying the results from \cite{lisuriart} in section 5.
At the end of section 4, we also show that the dimension obtained topologically in section 3 agrees with the model-theoretic one obtained from bcl.
All the arguments  in this article can be found in more detail in Chapter 5 in \cite{lisuri}.
  
When using the word variety, we always mean a Zariski closed subset of $F^n$ for some $n$, defined as the zero locus of some set of polynomials.
That is, we only consider affine varieties, and we don't require them to necessarily be irreducible. 
Until the middle of section 4, the field can be an arbitrary algebraically closed field, but when we move to the monster, it is important that $F$ is large enough (at least uncountable).
From thereon, the reader may make the assumption $F=\mathbb{C}$.
Morley rank is denoted by $MR$.
We follow the usual convention in model theory and write (e.g.) $a \in V$ for ``$a \in V^n$ for some $n$".
 
We thank Jonathan Kirby for suggesting that a torus could be transformed into a canonical form by a change of coordinates.
  
\section{PQF-topology}

To be able to formulate the axioms of a Zariski-like structure for the cover, we need the concept of an irreducible set.
A natural notion is provided by the PQF-topology, introduced in \cite{Lucy}, section 3.2.1.
 
\begin{definition}
For each $n$, define a topology on $V^n$ by taking the sets definable by positive quantifier-free first-order formulae (using any parameters) as the basic closed sets.
Call this the \emph{PQF}-topology.
\end{definition}
 
We define the notion of an irreducible set in the usual way.

\begin{definition}
We say a nonempty closed set is \emph{irreducible} if it cannot be written as the union of two proper closed subsets.
\end{definition}

Above, we have allowed any parameters for defining closed sets.
However, when we formulate the axioms of a Zariski-like structure for the cover in section 5, we will take only the irreducible sets definable by a positive, quantifier-free formula with using only elements of $\textrm{log } \overline{\mathbb{Q}}$ (here $\overline{\mathbb{Q}}$ denotes the field of algebraic numbers) as parameters, to be the irreducible sets in the sense of the definition of a Zariski-like structure.
 
Since the PQF-topology is not Noetherian (indeed, $(C_i)_{i<\o}$, where $C_i=\left\{ u \in V \, | \, \textrm{exp}\left(\frac{u}{2^i} \right)=1 \right\}$ forms an infinite descending chain of PQF-closed sets), we cannot speak about irreducible components in the classical sense.
However, we give a more general definition of irreducible components that makes sense in the context of PQF-closed sets.
 
\begin{definition}
If $C$ is a PQF-closed set, we say that the \emph{irreducible components} of $C$ are the maximal irreducible subsets of $C$.
\end{definition}

Later in this section, we will see that this definition actually makes sense, i.e. that all PQF-closed sets have irreducible components (and actually only a countable number of them).
On the way, we will also see that all irreducible PQF-closed sets are actually basic closed sets, i.e. definable by positive, quantifier free formulae (note that from the definition of PQF-closed sets, it does not directly follow that all PQF-closed sets are definable, since infinite intersections of basic closed sets are closed). 
 
We will eventually prove that for any irreducible $PQF$-closed $C$ and any variety $W$ on the field, it holds that $C$ is an irreducible component of $\textrm{log}(W)$ if and only if $\textrm{exp}(C)$ is an irreducible component of $W$ (in the classical sense):

\begin{corollary} 
If $W\subseteq \mathbb{C}^n$ is a variety and $C\subseteq V^n$ is an irreducible subset of the cover, then $C$ is an irreducible component of $\textrm{log }W$ if and only if $\textrm{exp}(C)$ is an irreducible component of $W$.
\end{corollary}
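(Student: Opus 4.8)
The plan is to establish the basic transfer properties of $\exp$ and $\log$, reduce the statement to the case of an irreducible variety $W$, and then treat that case using the structure of irreducible PQF-closed sets. Since $+$ and $\cdot$ on the field are encoded by $R_+$ and $R_0$, for every variety $W$ the set $\log W=\exp^{-1}(W)$ is PQF-closed and $\exp\colon V^n\to(\mathbb{C}^{*})^n$ is continuous from the PQF-topology to the Zariski topology; hence for any irreducible PQF-closed $A$ the Zariski closure $\overline{\exp(A)}$ is an irreducible variety. Since $\log W=\log(W\cap(\mathbb{C}^{*})^n)$, only the torus part of $W$ matters, and ``$\exp(C)$ is an irreducible component of $W$'' is to be read as ``$\overline{\exp(C)}$ is an irreducible component of $W$'' (equivalently, one may take $W\subseteq(\mathbb{C}^{*})^n$).

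Write $W=W_1\cup\dots\cup W_m$ for the decomposition of $W$ into irreducible components, so $\log W=\bigcup_i\log W_i$. If $C$ is a component of $\log W$ then, being irreducible and the union of the relatively closed sets $C\cap\log W_i$, it lies inside some $\log W_{i_0}$, and by maximality it is also a component of $\log W_{i_0}$. Conversely, if $\overline{\exp(C)}$ equals some component $W_i$, then $C\subseteq\log W_i$, and any irreducible closed $C'$ with $C\subseteq C'\subseteq\log W$ has $\overline{\exp(C')}$ an irreducible subvariety of $W$ containing $W_i$, hence equal to $W_i$, so $C'\subseteq\log W_i$. Thus, in both directions, the corollary reduces to the following special case for irreducible $W$: for irreducible $W$ and irreducible PQF-closed $C$, $C$ is a component of $\log W$ if and only if $\overline{\exp(C)}=W$. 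This in turn splits into (i) every component of $\log W$ has $\exp$-image Zariski-dense in $W$, and (ii) every irreducible PQF-closed $C$ is a component of $\log\overline{\exp(C)}$, i.e.\ $C$ admits no proper irreducible PQF-closed extension inside $\exp^{-1}(\overline{\exp(C)})$.

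For (i), the idea is that each fibre $\exp^{-1}(x)$, $x\in(\mathbb{C}^{*})^n$, is a coset of $K^n\cong\mathbb{Z}^n$ along which $R_+$ and $R_0$ take constant truth values and the PQF-closed subsets are just finite unions of intersections with affine $\mathbb{Q}$-subspaces; since $\mathbb{Z}^n$ cannot be covered by finitely many proper affine $\mathbb{Q}$-subspaces, $\exp^{-1}(x)$ is PQF-irreducible and hence contained in a single irreducible component of $\log W$. Combining this with the fact, from Section 2, that $\log W$ is the union of its countably many irreducible components, with the density of $\exp(\log W)=W\cap(\mathbb{C}^{*})^n$ in $W$, with the impossibility of covering an irreducible complex variety by countably many proper subvarieties, and with the translation action of $K^n$ on $\log W$ (which permutes the components without altering $\exp$-images), one should obtain that every component has dense image; assertion (ii) rests on the same analysis. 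The genuinely non-routine point — needed for both (i) and (ii), and in particular to see that $K^n$ acts transitively on the components of $\log W$ — is a structural description of irreducible PQF-closed sets: after a $\mathbb{Q}$-linear change of coordinates on $V^n$, dual to an algebraic change of coordinates putting the relevant coset of a subtorus into canonical form (the change of coordinates for tori suggested by Kirby, recorded in the introduction), such a set should be of the shape $\log(W'')\times(\textrm{an affine }\mathbb{Q}\textrm{-subspace})$ with $W''$ a ``free'' irreducible variety and $\log(W'')$ itself irreducible, and the arrangement of the components of $\log W$ and the transitivity of $K^n$ can then be read off. I expect the bulk of the work to lie in this structural analysis together with the bookkeeping forced by the non-Noetherianity of the PQF-topology, where already ``$\log W$ equals the union of its irreducible components'' is a theorem of Section 2 rather than the classical Noetherian fact and must be invoked at each step.
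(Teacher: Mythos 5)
Your reduction to irreducible $W$ is exactly the paper's first move, and your decomposition of the irreducible case into (i) ``every component of $\textrm{log }W$ has $\textrm{exp}$-image equal to $W$'' and (ii) ``every irreducible PQF-closed $C$ is maximal in $\textrm{log}(\textrm{exp}(C))$'' is a correct reformulation of what has to be shown. But you have flagged precisely the point at which your argument stalls, and that is precisely the point the paper settles by citation: Corollary~\ref{olennainen} comes \emph{after} Lemma~\ref{irredform} (every irreducible PQF-closed set has the canonical form $L\cap m\,\textrm{log }W$ with $L$ linear and $W$ non-branching) and Theorem~\ref{komponentit} (for irreducible $W=T\cap W'$, $T$ the minimal torus and $W'$ not contained in a proper torus, the components of $\textrm{log }W$ are exactly $(L+k)\cap m\,\textrm{log }W'^{\frac{1}{m}}_{(i)}$, $k\in K^n$), and the corollary follows simply by reading off $\textrm{exp}$ of these canonical forms: each listed component maps onto $T\cap W'=W$, and conversely any irreducible $C$ with $\textrm{exp}(C)=W$ is of the form in Lemma~\ref{irredform}, sits inside one of the listed components, and equals it by comparing canonical forms. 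So there is no independent argument left to make once the structural analysis is in hand.

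The more elementary route you sketch for (i) --- countably many components, density of $\textrm{exp}(\textrm{log }W)$ in $W$, a Baire-type argument against covering an irreducible complex variety by countably many proper subvarieties, then $K^n$-translation --- is a nice alternative framing, but it does not close without the same structural input. Transitivity of the $K^n$-action on components is not obvious: in Theorem~\ref{komponentit} the components are indexed both by $k\in K^n$ and by a root index $i$, and seeing that translation by $k'$ also permutes the $i$'s uses the identity $m\,\textrm{log }W'^{\frac{1}{m}}_{(i)}+k'=m\bigl(\textrm{log }W'^{\frac{1}{m}}_{(i)}+\frac{k'}{m}\bigr)$ together with the fact that $\textrm{exp}(\frac{k'}{m})$ is an $m$:th root of unity permuting the $m$:th roots of $W'$ --- again output of the structure theory. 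Your fibre-irreducibility claim also wants more care, since basic PQF-closed sets involve the operation $m\cdot(\cdot)$, which does not preserve fibres of $\textrm{exp}$, so the induced topology on a coset of $K^n$ is not as simple as ``finite unions of affine $\mathbb{Q}$-subspaces.'' In short: the plan is in the right spirit and the reduction is the paper's, but the proposal becomes a proof only after you reconstruct Lemma~\ref{irredform} and Theorem~\ref{komponentit}, which the paper derives via the torus normal form of Lemma~\ref{torusbirat} (the Kirby coordinate change you mention) and the finite-branching result of Theorem~\ref{ZilberTheorem}.
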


This result is given in Corollary \ref{olennainen} at the end of this section.
It will be used when proving the results on dimension theory that are needed for our main result.
We will obtain it as a corollary for a theorem that states that for an irreducible variety $W$, the irreducible components of $\textrm{log}(W)$ can be presented in a canonical form (Theorem \ref{komponentit}).
The theorem in itself is also important when analyzing the PQF-topology on the cover.
 
We will now give some results about the cover as a topological structure that will be needed in the proof (or in later sections of this paper).
Everything has been previously presented in \cite{Lucy}, chapter 3, and we refer the reader to \cite{lisuri}, chapter 5, for omitted details.
  
When looking at the first-order type of some tuple $v$ on the cover, we usually start by looking at the smallest variety containing $\textrm{exp}(v)$.
But in addition to this, we need to know the smallest variety containing $\textrm{exp}(\frac{v}{n})$ for each natural number $n$. 
This leads us to consider roots of varieties.
  
\begin{definition}
Let $W$ be an irreducible variety.
For any natural number $n$, we say that an irreducible variety $X$ is an $n$:th root of $W$ if 
$$X^n=\{x^n \, | \, x \in X\}=W.$$ 
 
Suppose now $W$ is an arbitrary variety with a decomposition $W=W_1 \cup \ldots \cup W_r$ into irreducible components.
Then, we define the $n$:th roots of $W$ to be all the unions of the form $\bigcup_{i=1}^r {{W_i}^{\frac{1}{n}}}_{(j)}$, where each ${{W_i}^{\frac{1}{n}}}_{(j)}$ is an $n$:th root of $W_i$.
\end{definition}
 
We note that all varieties have $n$:th roots for every $n$.
Indeed, suppose $W$ is irreducible, and let $Y_1, \ldots, Y_n$ be the irreducible components of the set $Y=\{x \, | \, x^n \in W\}$. 
We may write
$$W=X^n=Y_1^n \cup \ldots \cup Y_r^n.$$
As $W$ is irreducible, we have $Y_i^n=W$ for at least one $i$.

We also note that every $n$:th root of $W$ must be an irreducible component of $Y$, and thus, in particular, every variety has only finitely many $n$:th roots.
Indeed, let $Y'$ be some $n$:th root of $W$.
Then, $Y'$ is irreducible, and thus $Y' \subseteq Y_i$ for some $i$.
If $Y' \subsetneq Y_i$, then $dim(Y')<dim(Y)=dim(W)$ ($dim$ denotes the algebraic dimension) which contradicts the assumption that $Y'$ is a $n$:th root of $W$ as finite-to-one maps preserve the dimension.
   .
Also, if $W$ is any variety and $X$ is a $n$:th root of $W$, then clearly $X^n=W$. 

Let $W\subset F^n$ be an irreducible variety with $m$:th roots $W^{\frac{1}{m}}_{(i)}$. 
We say that an element $x \in F^n$ is an $m$:th \emph{root of unity} if each of its coordinates is an $m$:th root of unity in $F$.
It is easy to see that multiplication by $m$:th roots of unity permutes the $m$:th roots of $W$.
  
One easily sees that the following lemma holds. 
 
\begin{lemma}\label{tyypit}[\cite{ZilUusi}]  
Let $V$ be a cover and $A \subset V$.
Let $v \in V^n$ with linearly independent coordinates.
Then, the quantifier free type of $v$ over $A$ is determined by the formulae
\begin{eqnarray*}
\textrm{exp}\left(\frac{v}{l}\right) &\in& W^{\frac{1}{l}} \quad l \in \mathbb{N}, \\
\textrm{exp}(v) &\notin& Y \quad Y \subset W, \textrm{dim}(Y)<\textrm{dim}(W), \\
mv &\neq& 0 \quad m \in \mathbb{Z}^n, m \neq 0,
\end{eqnarray*}
where $W$ is the locus of $\textrm{exp}(v)$ over $\mathbb{Q}(\textrm{exp}(A))$ (the smallest field containing $\textrm{exp}(A))$ and each $W^{\frac{1}{l}}$ is an $l$:th root of $W$.
\end{lemma}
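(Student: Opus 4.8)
The plan is to determine what an atomic $\mathcal{L}$-formula over $A$ can possibly assert and then to show that the data displayed in the lemma is both an invariant of $\textrm{tp}_{qf}(v/A)$ and enough to recover all of it; morally this is the computation underlying the quantifier-elimination result for covers. Since every $\mathcal{L}$-term in $x=(x_1,\dots,x_n)$ over $A$ is a $\mathbb{Q}$-linear combination $\sum_i q_i x_i+\sum_j r_j a_j$, an atomic formula over $A$ is, up to $\mathcal{L}$-equivalence, either an integral linear equation $mx=\sum_j n_j a_j$ with $m\in\mathbb{Z}^n$, or an instance of $R_+$ or $R_0$ applied to such terms. Passing to a common denominator $l$ of the rationals occurring, an $R_+$- or $R_0$-instance becomes a Laurent-monomial, hence (after clearing denominators) a polynomial, relation among the coordinates of $\textrm{exp}(x/l)$ with coefficients among the $\textrm{exp}(a_j/l)$.

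That the displayed data lies in (is determined by) $\textrm{tp}_{qf}(v/A)$ is immediate: $\textrm{exp}(v/l)$ lies in the irreducible component of $\{x:x^l\in W\}$ whose $l$:th power contains the generic point $\textrm{exp}(v)$ of $W$, and that component is by definition an $l$:th root of $W$; the statement ``$\textrm{exp}(v)\notin Y$ for $Y\subsetneq W$ of smaller dimension'' is exactly the assertion that $W$ is the locus of $\textrm{exp}(v)$ over $\mathbb{Q}(\textrm{exp}(A))$; and $mv\neq0$ for $m\neq0$ is the $\mathbb{Q}$-linear independence hypothesis. (One may harmlessly enlarge $A$ by finitely many elements of $\textrm{log }\overline{\mathbb{Q}}$ so that the $l$:th roots in question are defined over $\mathbb{Q}(\textrm{exp}(A))$; this changes neither the hypotheses nor $\textrm{tp}_{qf}(v/A)$.)

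For the converse, let $v'$ carry the same data. The crux is that $\textrm{exp}(v/l)$ is a \emph{generic} point of the irreducible variety $W^{\frac1l}$ over any algebraic extension of $\mathbb{Q}(\textrm{exp}(A))$, in particular over the field generated by the finitely many $\textrm{exp}(a_j/l)$ relevant to a given atomic formula: each coordinate of $\textrm{exp}(v/l)$ is algebraic over $\mathbb{Q}(\textrm{exp}(v))$, so $\textrm{exp}(v/l)$ has the same transcendence degree over any base as $\textrm{exp}(v)$, namely $\textrm{dim}(W)=\textrm{dim}(W^{\frac1l})$ (finite-to-one maps preserve dimension); as $\textrm{exp}(v/l)\in W^{\frac1l}$ and an irreducible variety has no proper subvariety of full dimension, its locus over such a field is all of $W^{\frac1l}$. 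The same applies to $v'$. Hence $\textrm{exp}(v/l)$ and $\textrm{exp}(v'/l)$ satisfy exactly the same polynomial relations with coefficients among the $\textrm{exp}(a_j/l)$, which by the first paragraph means $v$ and $v'$ satisfy the same $R_+$- and $R_0$-atomic formulas over $A$; and an integral equation $mv=\sum_j n_j a_j$ transfers to $v'$ because it forces $\textrm{exp}(m(v/l))=\textrm{exp}(\sum_j n_j(a_j/l))$ for every $l$ -- a relation valid on $W^{\frac1l}$, hence at $\textrm{exp}(v'/l)$ -- so that $mv'-\sum_j n_j a_j\in\bigcap_l lK=\{0\}$ using $K\cong\mathbb{Z}$, and symmetrically. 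Therefore $\textrm{tp}_{qf}(v/A)=\textrm{tp}_{qf}(v'/A)$; equivalently, the $\mathbb{Q}$-linear map over $A$ sending $v_i\mapsto v'_i$ is a partial $\mathcal{L}$-isomorphism.

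I expect the main obstacle to be the bookkeeping around ``roots of the parameters'': making sure the field over which $\textrm{exp}(v/l)$ is generic is genuinely algebraic over $\mathbb{Q}(\textrm{exp}(A))$ and identical for $v$ and $v'$, and that an arbitrary $R_+$- or $R_0$-atomic formula really does collapse to a single polynomial condition at one finite level $l$. The step that $x\mapsto x^l$ sends generic points to generic points -- resting on the recorded fact that finite-to-one maps preserve dimension, together with $\bigcap_l lK=\{0\}$ for handling the $K$-ambiguity in linear equations -- is the technical heart of the argument.
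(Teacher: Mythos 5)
The paper itself gives no proof here --- it prefaces the lemma with ``One easily sees'' and defers to \cite{ZilUusi} --- so there is no in-text argument to compare against; I can only assess your write-up on its own. Your argument is correct and isolates exactly the right ingredients: every atomic $\mathcal{L}$-formula over $A$ reduces, at a common denominator level $l$, either to an integral linear equation or to a polynomial relation among the coordinates of $\textrm{exp}(v/l)$ with coefficients among the $\textrm{exp}(a_j/l)$; the listed data forces $\textrm{exp}(v/l)$ to be a generic point of the irreducible variety $W^{\frac{1}{l}}$ over $\overline{\mathbb{Q}(\textrm{exp}(A))}$ for every $l$, so two tuples carrying the same data realize the same field type there; and $\bigcap_l lK=0$ (from $K\cong\mathbb{Z}$) pins down the integer ambiguity left over in linear relations after applying $\textrm{exp}$.

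Two points of hygiene. The parenthetical claim that enlarging $A$ by elements of $\textrm{log }\overline{\mathbb{Q}}$ ``changes neither the hypotheses nor $\textrm{tp}_{qf}(v/A)$'' is not literally true: passing to a larger parameter set can shrink the locus $W$ and strictly enlarge the quantifier-free type. Since your dimension count already works directly over the algebraic closure $\overline{\mathbb{Q}(\textrm{exp}(A))}$, that aside is unnecessary and should simply be dropped. Relatedly, the phrase ``its locus over such a field is all of $W^{\frac{1}{l}}$'' is slightly off, because $W^{\frac{1}{l}}$ need not be definable over the subfield generated by the finitely many $\textrm{exp}(a_j/l)$; what you in fact use, and what is correct, is that $\textrm{exp}(v/l)$ and $\textrm{exp}(v'/l)$ are both generic in the \emph{same} irreducible $W^{\frac{1}{l}}$ over $\overline{\mathbb{Q}(\textrm{exp}(A))}$, hence satisfy the same polynomial relations over any subfield of that closure, including the one carrying the coefficients of the given atomic formula.
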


To determine quantifier-free types on the cover, we need to be able to take account of all the possible roots that a given variety has.
Thus, we introduce the concept of branching.

\begin{definition}
Let $W$ be a variety.
If $W$ has distinct $n$:th roots for some natural number $n$, we say that $W$ \emph{branches}.

We say that $W$ \emph{stops branching at the finite level} if there is a natural number $l$ such that the $l$:th roots $W^{\frac{1}{l}}$ no longer branch.

We say $W$ \emph{branches infinitely} if it does not stop branching at the finite level.
\end{definition}

When studying branching more closely, we will see that it makes a great difference whether a variety is contained in a torus or not.
A torus can be seem as a coset of a subgroup of $({F^*})^n$ equipped with a group operation given by coordinate-wise multiplication.
This gives the following definition.

\begin{definition}
 Call a set $T \subseteq (F^*)^m$ a \emph{torus} if it can be defined using equations of the form
\begin{displaymath}
\prod_i {x_i}^{z_i}=c \qquad z_i \in \mathbb{Z} \quad c \in F^{*} 
\end{displaymath}

If $T \subseteq (F^*)^m$ is a torus such that $T \neq (F^*)^m$, we say that $T$ is a \emph{proper torus}.
\end{definition}

We will sometimes view a torus as a variety.
Then, we mean the Zariski closure of a set that is defined as above.
We will say that a torus $T$ is \emph{irreducible}, if the Zariski closure of $T$ is irreducible as a variety (in the usual sense).
 
Jonathan Kirby pointed out to us that irreducible tori can be transformed in a canonical form by a birational coordinate change on $(F^{*})^n$, as stated in the following lemma.
 
\begin{lemma}\label{torusbirat}
Let $T \subset( {F^{*}})^m$ be an irreducible torus given in the coordinates $x_1, \ldots, x_m$.
Then, there is a birational coordinate change given by 
\begin{eqnarray}\label{chaange}
y_i=\prod_{j=1}^m x_j^{z_{ij}}, \quad x_i=\prod_{j=1}^m y_j^{z_{ij}'}, \quad 1 \le i \le m, \quad z_{ij}, z_{ij}' \in \mathbb{Z},
\end{eqnarray}
such that in the new coordinates, $T$ is of the form 
$$y_i=c_i, \quad 1 \le i \le k,$$
where $1 \le k \le m$, and $c_i \in F^*$ for each $i$.
\end{lemma}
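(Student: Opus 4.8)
The statement is essentially linear algebra over $\mathbb{Z}$ dressed up as algebraic geometry, so the plan is to reduce it to the classification of finitely generated subgroups of $\mathbb{Z}^m$ (or equivalently to the Smith normal form of an integer matrix). First I would encode the torus $T$ by the subgroup $\Lambda \subseteq \mathbb{Z}^m$ of all exponent vectors $z=(z_1,\ldots,z_m)$ such that the monomial $\prod_i x_i^{z_i}$ is constant on $T$; that is, $\Lambda$ is the set of $z$ for which $\prod_i x_i^{z_i}=c_z$ holds identically on $T$ for some $c_z\in F^*$. The map $z\mapsto c_z$ is a homomorphism $\Lambda\to F^*$. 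The key observation is that irreducibility of $T$ forces $\Lambda$ to be a \emph{saturated} (pure) subgroup of $\mathbb{Z}^m$: if some $nz\in\Lambda$ with $z\notin\Lambda$, then $\prod x_i^{nz_i}$ is constant but $\prod x_i^{z_i}$ is not, and the fibers of the latter over its (finitely many) values cut $T$ into a union of proper closed subsets, contradicting irreducibility. Dually — and this is what I would actually use — irreducibility guarantees that the homomorphism $\Lambda\to F^*$ extends/behaves well enough that we can realize the constants $c_i$ below.

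Next I would invoke the structure theorem for subgroups of free abelian groups: since $\Lambda$ is pure of some rank $k$ ($1\le k\le m$ because $T\ne(F^*)^m$ by... well, if $k=0$ then $T=(F^*)^m$ and there is nothing to prove, so assume $k\ge 1$; if $k=m$ then $T$ is finite, but an irreducible torus of dimension $0$ is a point, still fine), there is a basis $e_1,\ldots,e_m$ of $\mathbb{Z}^m$ such that $e_1,\ldots,e_k$ is a basis of $\Lambda$. Writing the change of basis as an invertible integer matrix $(z_{ij})$ with inverse $(z_{ij}')$ (both integer matrices, since $\mathrm{GL}_m(\mathbb{Z})$ is closed under inverses), define $y_i=\prod_j x_j^{z_{ij}}$ and note $x_i=\prod_j y_j^{z_{ij}'}$, which is exactly the birational coordinate change (\ref{chaange}). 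Under this substitution, the defining relations of $T$ become precisely $y_i=c_i$ for $1\le i\le k$ (where $c_i$ is the value of the corresponding monomial, which is well-defined and nonzero because $e_i\in\Lambda$), and there are no further constraints because $e_{k+1},\ldots,e_m$ generate a complement on which no monomial is forced constant. I would check that this substitution is genuinely birational on $(F^*)^m$: it is a monomial map with monomial inverse, hence an automorphism of the torus $(F^*)^m$ as an algebraic group, in particular birational.

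The one point requiring genuine care — and I expect it to be the main obstacle — is pinning down why \emph{irreducibility} is the hypothesis that makes this work, i.e. verifying that for an irreducible $T$ the group $\Lambda$ is pure and that the quotient description is faithful. The subtlety is that a priori $T$ is defined by finitely many equations $\prod_i x_i^{z_i^{(\ell)}}=c_\ell$, and the subgroup these generate inside $\mathbb{Z}^m$ need not be saturated; taking its saturation corresponds geometrically to passing to one irreducible component, and one must argue that an \emph{irreducible} torus already equals one such component — i.e. that it is not a proper union. I would handle this by the argument sketched above: if $z\in\mathbb{Z}^m$ has $nz$ in the generated subgroup but $z$ is not, then $T\subseteq\{x:(\prod x_i^{z_i})^n=c\}=\bigcup_{j}\{x:\prod x_i^{z_i}=\zeta_j\}$ over the $n$ roots $\zeta_j$ of $c$, and each piece meets $T$ in a proper closed subset (proper because the full set of defining relations of $T$ does not force $\prod x_i^{z_i}$ constant, or else $z$ would be in the generated subgroup after all — here one uses that over an algebraically closed field the only monomial relations holding on $T$ are those in the $\mathbb{Q}$-span, a standard fact about tori), contradicting irreducibility. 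Once purity is established, everything else is bookkeeping with Smith normal form.
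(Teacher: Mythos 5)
Your argument is correct, and it takes a genuinely different (more structural) route than the paper's. You encode $T$ once and for all by the subgroup $\Lambda\subseteq\mathbb{Z}^m$ of all exponent vectors whose monomial is constant on $T$, prove that $\Lambda$ is pure by a single appeal to irreducibility (a regular function with finitely many values on an irreducible variety is constant), observe that $T$ is exactly the variety cut out by the $\Lambda$-relations (one inclusion is the definition of $\Lambda$, the other holds because the original defining exponent vectors lie in $\Lambda$, and $z\mapsto c_z$ is a homomorphism), and then invoke the structure theorem for finitely generated free abelian groups to extend a basis of the pure subgroup $\Lambda$ to a basis of $\mathbb{Z}^m$; the corresponding matrix in $\mathrm{GL}_m(\mathbb{Z})$ and its integer inverse give the monomial change of coordinates in one step. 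The paper instead works directly with the exponent matrix of a chosen presentation of $T$ and performs an iterated row-reduction over $\mathbb{Z}$: at each stage it normalizes a row to gcd $1$ (or extracts a $d$-th root of the right-hand constant, splitting into finitely many cases), obtaining a finite union of tori in canonical form, and only at the very end invokes irreducibility again to select the unique nonempty piece. Your version is more conceptually economical — irreducibility is used exactly once, the elementary-divisor theory is quoted rather than re-derived, and there is no bookkeeping of multiple branches — whereas the paper's proof is more explicit and self-contained. Both correctly identify the same phenomenon, namely that irreducibility forces the relevant sublattice to be saturated, equivalently that the row-reduction can be carried out unimodularly.
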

 
\begin{proof} 
Suppose the torus $T$ is given by the equations 
\begin{eqnarray}\label{tallaa}
\prod_{i=1}^m x_i^{n_{ji}}=c_j, 
\end{eqnarray}
where $1\le j \le k$ for some $k$, $n_{ij} \in \mathbb{Z}$ and $c_j \in F^*$.
 
To prove the lemma, we will view the multiplicative group $(F^{*})^m$ as a $\mathbb{Z}$-module where $\mathbb{Z}$ acts by exponentiation. 
Then we  look for invertible endomorphims that would give a suitable coordinate change. 
 
We start looking at the first one of the equations (\ref{tallaa}). 
Since $T$ is irreducible, $1$ is the 
greatest integer that divides each one of the $n_{1i}$ for $1 \le i \le m$.
Thus, there exists an integer matrix $A$ such that the first row of $A$ is $(n_{11}, \ldots, n_{1m})$ and $\textrm{det}(A)=1$ (for details, see Lemma 5.8 in \cite{lisuri}).
Then, the coordinate change given by $A$ is of the form (\ref{chaange}) and transforms our equation into $y_1=c_1$.
Using Cramer's rule, we see that  $A^{-1}$ is also an integer matrix, 
and thus the reverse coordinate change is also given in the form (\ref{chaange}).
 
Since the coordinate change we have done is given by equations of the form (\ref{chaange}), 
all the equations in (\ref{tallaa}) are still in the torus form after the transformation.
Consider the second equation.
After substituting $y_1=c_1$, it will be given by 
\begin{eqnarray*} 
y_2^{z_2} \cdots y_m^{z_m}=c_2'
\end{eqnarray*}
for some $z_2, \ldots, z_m \in \mathbb{Z}$, $c_2' \in F^*$. 
Let $d$ be the greatest integer dividing each one of the numbers $z_2, \ldots, z_m$.
Then, an integer matrix with determinant $ \pm 1$ and first row $(\frac{z_2}{d}, \ldots, \frac{z_m}{d})$ transforms the equation into
$$y_2^{d}=c_2',$$
which gives us 
$$y_2=\zeta^{i} a, \quad i=0, \ldots, d-1,$$
where $a$ is a number such that $a^{d}=c_2'$ and $\zeta$ is a primitive $d$:th root of unity.

We substitute these values to the third equation to get at most $d$ distinct equations.
Then, we deal with each one of them as we did with the second equation above.
Proceeding this way and going through all the equations, we will get $T$ in the new coordinates as a union of smaller tori, each given by equations of the form
$$y_i=c_i, \quad 1 \le i \le k$$
for $c_i \in F^*$ and some $k \le n$.
Since our coordinate change and its inverse are both given by rational functions, it is a homeomorphism in the Zariski topology, and thus maps irreducible sets to irreducible sets.
Since we assumed $T$ to be irreducible, only one of the components listed is nonempty. 
This proves the lemma.
\end{proof}

Now it is easy to prove the following.  

\begin{lemma}\label{torusom}
The following hold:
\begin{enumerate}[(a)]
\item If $T_1$, $T_2$ are tori, then $T_1 \cap T_2$ is a torus.
\item If $T$ is a torus, then every irreducible component of $T$ is a torus.
\item If $T$ is a torus, then $T$ has distinct $m$:th roots for any $m$.
Moreover, any $m$:th root of $T$ is a torus.
\item If $T$ is an irreducible torus, then $T^m$ is a torus for every natural number $m$.
\end{enumerate}
\end{lemma}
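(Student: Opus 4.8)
The plan is to derive all four parts from the definition of a torus together with Lemma \ref{torusbirat} and the monomial–coordinate–change machinery used in its proof. The one fact I use repeatedly is that a coordinate change of the form (\ref{chaange}), together with its inverse, is a homeomorphism of $(F^*)^m$ in the Zariski topology which sends a set defined by monomial equations $\prod_i x_i^{z_i}=c$ to another such set (substitute and collect exponents); in particular it maps tori to tori, maps irreducible components to irreducible components, and commutes with the coordinatewise power map $x\mapsto x^m$.

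Part (a) is immediate: if $T_1$ and $T_2$ are cut out by families of monomial equations, then $T_1\cap T_2$ is cut out by the union of the two families, which is again such a family. For (b) I would run the argument in the proof of Lemma \ref{torusbirat} without assuming irreducibility (equivalently, apply Smith normal form to the lattice of exponent vectors of the defining equations of $T$): after one coordinate change of the form (\ref{chaange}), $T$ becomes a finite union of ``point-tori'' $\{y_1=a_1,\ldots,y_k=a_k\}$, which are pairwise disjoint and each isomorphic to $(F^*)^{m-k}$, hence irreducible; so these are exactly the irreducible components of $T$ in the new coordinates, and each is visibly a torus. Pulling back along the inverse coordinate change, which is a homeomorphism preserving torus-ness, gives (b).

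For the first part of (c), reduce to the irreducible case using (b) together with the componentwise definition of $n$-th roots. For an irreducible torus $T$, Lemma \ref{torusbirat} puts it in the form $\{y_i=c_i : i\le k\}$; then $Y=\{y : y^m\in T\}=\{y : y_i^m=c_i,\ i\le k\}$ has as its irreducible components the tori $Z_{(a)}=\{y_i=a_i : i\le k\}$ indexed by the tuples with $a_i^m=c_i$, and each satisfies $Z_{(a)}^m=T$. Since $x\mapsto x^m$ is finite-to-one, hence dimension preserving, any irreducible variety $X$ with $X^m=T$ lies in some component $Z_{(a)}$ of $Y$ and, on comparing dimensions, fills it out; so the $m$-th roots of $T$ are precisely the tori $Z_{(a)}$, of which there are $m^k$ because every $c_i$ has $m$ distinct $m$-th roots in an algebraically closed field of characteristic $0$. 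Transporting back gives the statement for an arbitrary irreducible torus. Part (d) is similar: write an irreducible $T$ as $\{y_i=c_i : i\le k\}$ via Lemma \ref{torusbirat}, and since the coordinate change commutes with $x\mapsto x^m$, the image of $T^m$ is $\{z : z_i=c_i^m,\ i\le k\}$ (the remaining coordinates of $y^m$ sweep out all of $F^*$ because $y_{k+1},\ldots,y_m$ do), which is a point-torus; hence $T^m$ is a torus.

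The step I expect to be the main obstacle is the reducible case of (c): one must check that a union of componentwise $m$-th roots is again defined by monomial equations. The clean way to handle it is through the reduction above — after Lemma \ref{torusbirat} the irreducible components of a torus are translates of one another by roots of unity, and multiplication by roots of unity permutes the roots of each component (as noted in the text just before Lemma \ref{torusbirat}), so the relevant family of componentwise roots can be collected into the defining equations of a single torus. Keeping track of these root-of-unity factors is the only real bookkeeping in the lemma.
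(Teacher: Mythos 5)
The paper gives no argument here --- the lemma is introduced with ``Now it is easy to prove the following'' and left to the reader --- so there is no in-text route to compare yours against; the only question is whether your argument is sound. Your proofs of (a), (b), (d), and of the irreducible case of (c) are correct and use exactly the available tools: the monomial coordinate change of Lemma~\ref{torusbirat}, the fact that such a change commutes with the coordinatewise $m$-th power map, and (for (b)) the observation that the proof of Lemma~\ref{torusbirat} already presents an arbitrary torus as a disjoint finite union of point-tori before irreducibility is invoked, so those unions are precisely the irreducible components.

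Where you flag the main obstacle --- the reducible case of the ``moreover'' in (c) --- no amount of root-of-unity bookkeeping will close the gap, because the claim is false as literally stated under the paper's componentwise definition of an $m$-th root of a reducible variety. Take $T=\{x\in F^* : x^2=1\}=\{1,-1\}$, a torus in $F^*$ with irreducible components $\{1\}$ and $\{-1\}$. The $2$-nd roots of $\{1\}$ are $\{1\},\{-1\}$ and those of $\{-1\}$ are $\{i\},\{-i\}$, so by the paper's definition a $2$-nd root of $T$ is, for instance, $\{1,i\}$. But every nonempty torus in $F^*$ is a coset of a subgroup of $F^*$, hence a two-element torus would have to be a coset of $\{1,-1\}$, i.e.\ of the form $\{a,-a\}$; $\{1,i\}$ is not of this form (nor are the other three $2$-nd roots of this $T$). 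So the ``moreover'' in (c) can only be read as applying to irreducible $T$ --- which is, as far as I can see, the only case that is used later --- or ``$m$-th root'' must be reinterpreted as ``irreducible component of $\{x : x^m\in T\}$'', for which the claim does hold by your argument. Either way, your write-up should state the restriction explicitly rather than attempt to assemble the componentwise unions into a single torus, since in general they are not one.
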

  
It is a consequence of the following theorem that varieties do not branch infinitely as long as they are not contained in a torus.  

 \begin{theorem}\label{ZilberTheorem}  
Let $V$ be a cover with ($K=\mathbb{Z}$).
Let $G$ be a countable submodel such that $G=\textrm{log}(\textrm{exp}(G))$ and $\textrm{exp}(G)$ is an algebraically closed field.
Let $h \in V^m$.
Let $W$ be the locus of $\textrm{exp}(h)$ over $\textrm{exp}(G)$.
Suppose $W$ is not contained in a torus. 
Then the subtype of $h$ over $G$ consisting of formulae $\textrm{exp}(\frac{h}{l}) \in W^\frac{1}{l}$ is determined by a finite number of formulae.
 \end{theorem}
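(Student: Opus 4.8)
The plan is to reduce the statement to a uniform bound on the number of $l$:th roots of $W$ and then to deduce that bound from Kummer theory, in the spirit of Zilber's Thumbtack Lemma. Write $A=\exp(G)$ (an algebraically closed field) and $a=\exp(h)\in(F^{*})^{m}$, so that $W=\mathrm{loc}(a/A)$. The hypothesis that $W$ is not contained in a (proper) torus means precisely that no monomial $\prod_{i}a_{i}^{k_{i}}$ with $(k_{i})\neq 0$ lies in $A^{*}$: a containment $W\subseteq\{x:\prod_{i}x_{i}^{k_{i}}=c\}$ forces $\prod_{i}x_{i}^{k_{i}}$ to be constant on the irreducible variety $W$, so its value $\prod_{i}a_{i}^{k_{i}}$ lies in $A$, and conversely. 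In particular the coordinates of $h$ are $\mathbb{Q}$-linearly independent and Lemma~\ref{tyypit} applies. For $l\ge 1$ let $r(l)$ be the number of $l$:th roots of $W$. The first step is to note that it suffices to show that $(r(l))_{l}$ is bounded. Indeed, if $l\mid l'$ then coordinate-wise powering $X\mapsto X^{l'/l}$ maps the $l'$:th roots of $W$ \emph{onto} the $l$:th roots of $W$, so $r$ is nondecreasing along divisibility; a bounded such sequence attains its maximum at some $l_{0}$, and then for every $l$ the surjection from the $\mathrm{lcm}(l,l_{0})$:th roots of $W$ onto the $l_{0}$:th roots is a bijection. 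Hence, once the particular root $W^{1/l_{0}}$ containing $\exp(h/l_{0})$ is fixed, the $\mathrm{lcm}(l,l_{0})$:th root through $\exp(h/\mathrm{lcm}(l,l_{0}))$ is the \emph{unique} one lying over $W^{1/l_{0}}$, and therefore so is $W^{1/l}$, since $\exp(h/l)$ is a fixed power of $\exp(h/\mathrm{lcm}(l,l_{0}))$. Thus the single formula $\exp(h/l_{0})\in W^{1/l_{0}}$, together with the remaining formulae of $\mathrm{tp}(h/G)$ (in particular those saying that $\exp(h)$ is generic on $W$ over $G$), entails the whole subtype.

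To bound $r(l)$ I would compute it by Galois theory. The $l$:th power map $(F^{*})^{m}\to(F^{*})^{m}$ is finite \'etale and Galois with group $\mu_{l}^{m}$, hence so is its base change $\{x:x^{l}\in W\}\to W$; its irreducible components are exactly the $l$:th roots of $W$, the one through $\exp(h/l)$ having function field $A(\exp(h/l))$. Since $A$ contains all $l$:th roots of unity, Kummer theory identifies $\mathrm{Gal}(A(\exp(h/l))/A(\exp(h)))$ with the subgroup of $A(a)^{*}/(A(a)^{*})^{l}$ generated by $a_{1},\dots,a_{m}$, and counting the components of this Galois cover gives
\[
r(l)=\bigl|\{\,\bar k\in(\mathbb{Z}/l)^{m}\ :\ \textstyle\prod_{i}a_{i}^{k_{i}}\in(A(a)^{*})^{l}\,\}\bigr|.
\]

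It remains to bound the right-hand side independently of $l$, which is the heart of the matter. Fix a projective normal model $\tilde W$ with function field $A(a)$ and let $\delta\colon\mathbb{Z}^{m}\to\mathrm{Div}(\tilde W)$, $k\mapsto\mathrm{div}(\prod_{i}x_{i}^{k_{i}})$. A nonzero rational function on $\tilde W$ with trivial divisor is a global regular function, hence a constant in $A^{*}$; by the first paragraph no monomial in the $a_{i}$ is such a constant, so $\delta$ is injective. As $\mathrm{Div}(\tilde W)$ is free abelian, $\Lambda:=\mathrm{im}(\delta)\cong\mathbb{Z}^{m}$ has a saturation $\Lambda^{\mathrm{sat}}$ in $\mathrm{Div}(\tilde W)$ of some finite index $c$. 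If $\prod_{i}a_{i}^{k_{i}}=\beta^{l}$ with $\beta\in A(a)^{*}$, then $\delta(k)=l\,\mathrm{div}(\beta)\in l\,\mathrm{Div}(\tilde W)\cap\Lambda\subseteq l\,\Lambda^{\mathrm{sat}}$, so the set counted above is contained in the kernel of the induced map $(\mathbb{Z}/l)^{m}\to\Lambda^{\mathrm{sat}}/l\Lambda^{\mathrm{sat}}$, whose size equals $|(\Lambda^{\mathrm{sat}}/\Lambda)/l(\Lambda^{\mathrm{sat}}/\Lambda)|\le c$. Hence $r(l)\le c$ for all $l$, which by the reduction above completes the proof.

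I expect the last step to be the real obstacle: one must pass from the group-theoretic count of roots to the geometry of $W$ and extract a bound that does not depend on $l$, the role of ``$W$ is not contained in a torus'' being exactly to ensure there are no multiplicative relations among $a_{1},\dots,a_{m}$ over $A$ for Kummer theory to exploit. An alternative, closer to the framework of Section~2, is to invoke Zilber's result directly (\cite{Zilber}, \cite{bays}) and run only the reduction of the first paragraph.
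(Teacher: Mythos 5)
Your proposal is correct, but it takes a genuinely different route from the paper's. The paper proves this theorem by direct citation: it invokes Theorem 3 of \cite{bays} (Zilber's ``Thumbtack Lemma'') to obtain an $m$ such that the level-$m$ root of $\exp(h)$ determines the field type of all its roots over $\exp(G)$, and concludes immediately since $W$ has finitely many $m$:th roots. You instead give a self-contained proof: you first reduce the statement to a uniform bound on $r(l)$, the number of $l$:th roots of $W$ (the reduction is sound --- powering maps are surjective, so $r$ is monotone along divisibility, and once $r$ stabilises at $l_0$ the single formula $\exp(h/l_0)\in W^{1/l_0}$ pins down the whole subtype) --- and then bound $r(l)$ by combining Kummer theory over $A(a)=\exp(G)(\exp(h))$ with a divisor-theoretic argument on a normal projective model $\tilde W$. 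The hypothesis ``$W$ not contained in a torus'' is used exactly where it should be: it makes the monomial-divisor map $\delta:\mathbb{Z}^m\to\mathrm{Div}(\tilde W)$ injective (no nontrivial monomial in the $a_i$ is a constant in $A^{*}$), so that $\Lambda=\mathrm{im}(\delta)$ has finite index $c$ in its saturation, giving $r(l)\le c$ for all $l$. In effect you have reproved the relevant special case of the Thumbtack Lemma (over an algebraically closed base, for a multiplicatively independent tuple), which has the merit of making the role of the torus hypothesis completely transparent, whereas the paper's argument is shorter and fits its pervasive reliance on \cite{bays}. One small point worth spelling out: the finiteness of $[\Lambda^{\mathrm{sat}}:\Lambda]$ is easiest to see by noting that $\Lambda^{\mathrm{sat}}$ is supported on the finitely many prime divisors appearing in $\delta(e_1),\dots,\delta(e_m)$, hence is free of rank $m$; the bare statement that $\mathrm{Div}(\tilde W)$ is free abelian is not quite a one-step justification since it has infinite rank.
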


\begin{proof}
We may without loss assume $h \notin G$ (otherwise it is trivial to prove the statement).
By Theorem 3 in \cite{bays}, there exists some $m \in \mathbb{N}$ such that if $(c_1^{\frac{1}{n}})_n$ and $(c_2^{\frac{1}{n}})_n$ are division systems below $\textrm{exp}(h)$ (see \cite{bays}, Definition 2) with $c_1^{\frac{1}{m}}=c_2^{\frac{1}{m}}$, then, for all $l \in \mathbb{N}$, we have that $c_1^{\frac{1}{l}}$ and $c_2^{\frac{1}{l}}$ have the same field type over $\textrm{exp}(G)$.
Since $W$ has only finitely many $m$:th roots, the formula $\textrm{exp}(\frac{h}{m}) \in W^{\frac{1}{m}}$ determines the whole subtype. 
 \end{proof}

Suppose $W$ is a variety, $v \in \textrm{log }W$.
 For any $l$, denote by $W^{\frac{1}{l}}_{(v)}$ the $l$:th root of $W$ such that $\textrm{exp}(\frac{v}{l}) \in W^{\frac{1}{l}}_{(v)}$.
If $W$ is a variety not contained in any torus, then, Theorem \ref{ZilberTheorem} implies that there is some number $m$ such that for any $m'>m$, the $m'$:th root $W^{\frac{1}{m'}}_{(v)}$ is determined by the $m$:th root $W^{\frac{1}{m}}_{(v)}$.

On the other hand, if $W$ is contained in a torus $T$, taking account of roots becomes more complicated since tori branch infinitely.
But since we are looking at types on the cover $V$, we can solve this problem by finding a nice enough set $L \subseteq V$ 
such that $\textrm{exp}(L)=T$. 
 
\begin{definition}
A set $L \subseteq V^n$ is called \emph{linear} if it is a definable affine subspace of $V^n$. 
\end{definition}

\begin{remark}\label{linearintersect}  
For any linear set $L \subset V^n$ and any $k=(k_1, \ldots, k_n) \in K^n$, it holds that $L \cap (L+k) \neq \emptyset$ if and only if $L+k=L$.
\end{remark}
 
\begin{remark}  
We note that if $L \subset V^n$ is linear, then $\textrm{exp}(L)\subset F^n$ is a torus.
Also, using Lemma \ref{torusbirat}, it is easy to see that 
any irreducible torus $T\subset F^n$ can be written as $T=\textrm{exp}(L)$ for some linear set $L\subset V^n$. 
\end{remark}

\subsection{Irreducible sets}

We will now start looking more closely at the properties of PQF-closed sets that will be needed when analyzing irreducible components.   
 
\begin{lemma}[\cite{Lucy}, Lemma 3.2.1]\label{PQFbasicform}
Any set definable by a positive quantifier free formula is a finite union of sets of the form
\begin{displaymath}
m \cdot (L \cap \textrm{log } W)
\end{displaymath}
for some linear set $L$, a variety $W$ and some $m \in \mathbb{N}$.
\end{lemma}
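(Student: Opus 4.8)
The plan is to reduce an arbitrary positive quantifier-free $\mathcal{L}$-formula to the claimed normal form by induction on the structure of the formula, analysing atomic formulas first and then checking that the claimed class of sets is closed under finite unions and finite intersections. First I would recall that the basic closed sets of the PQF-topology are exactly the sets defined by positive quantifier-free formulas, and that such a formula is built from atomic formulas using $\wedge$ and $\vee$ (no negation, since it is positive). So it suffices to (i) show every atomic formula defines a set of the required shape, and (ii) show the family of finite unions of sets $m\cdot(L\cap\textrm{log }W)$ is closed under $\cap$ (closure under $\cup$ is immediate from the ``finite union'' in the statement).

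For step (i), the atomic formulas of $\mathcal{L}$ come in a few types. Equalities $t_1=t_2$ between terms in $\{0,+,f_q\}$ define $\mathbb{Q}$-affine subspaces of $V^n$, i.e. linear sets $L$; one recovers the required form by writing $L = 1\cdot(L\cap\textrm{log }F^n)$ with $W=F^n$ (the whole space). The relation $R_0(v_1,v_2)$ says $\textrm{exp}(v_1)\textrm{exp}(v_2)=1$ in $F^*$, which is $\textrm{exp}((v_1,v_2))\in W$ for the subvariety $W=\{(x,y)\,:\,xy=1\}$, so the set is $\textrm{log }W$, again of the form $1\cdot(V^2\cap\textrm{log }W)$ — but one must compose with the coordinate maps coming from the term arguments of $R_0$, which introduces a linear set $L$ cutting out the affine relation among the ambient variables and possibly a scaling $m$ to clear denominators in the $f_q$'s. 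Similarly $R_+(v_1,v_2,v_3)$ translates to $\textrm{exp}((v_1,v_2,v_3))\in W$ for the variety $W=\{(x,y,z):x+y=z\}$. In general an atomic formula in variables $\bar v$ with terms plugged into a relation symbol defines a set of the form $\{\bar v : A\bar v \text{ satisfies the linear constraints}\ \wedge\ \textrm{exp}(B\bar v)\in W_0\}$ for integer/rational matrices $A,B$ and a fixed variety $W_0$; pulling this back through $\textrm{exp}$ and clearing denominators gives exactly $m\cdot(L\cap\textrm{log }W)$ for a suitable $L$, $W$, $m$. The scalar $m$ appears precisely because $f_q$ with $q=a/b$ forces us to pass to $b$-divisible elements, and the dilation $u\mapsto mu$ is the clean way to encode ``$u$ is $b$-divisible with $u/b$ lying in such-and-such set''.

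For step (ii) — closure under intersection — I would use $m_1\cdot(L_1\cap\textrm{log }W_1)\cap m_2\cdot(L_2\cap\textrm{log }W_2)$ and first pass to a common dilation factor $m=\mathrm{lcm}(m_1,m_2)$, rewriting each $m_i\cdot(L_i\cap\textrm{log }W_i)$ as a finite union of sets of the form $m\cdot(L_i'\cap\textrm{log }W_i')$ (here one needs: a dilation of a linear set is a finite union of linear sets modulo the kernel $K$, and $\textrm{log}$ of a variety pulled along $x\mapsto x^{m/m_i}$ is $\textrm{log}$ of an $(m/m_i)$:th root, of which there are finitely many — this is where Lemma \ref{torusom}(c) and the finiteness of $n$:th roots noted after the root definition get used). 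Once both sides are dilated by the same $m$, the intersection becomes $m\cdot\big((L_1'\cap L_2')\cap\textrm{log}(W_1'\cap W_2')\big)$, distributed over the finitely many choices; $L_1'\cap L_2'$ is again linear and $W_1'\cap W_2'$ is again a variety, so we land back in the required family. Distributing over the finite unions keeps everything a finite union.

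I expect the main obstacle to be the bookkeeping in step (ii): handling the dilation maps $u\mapsto mu$ cleanly, since $m\cdot X$ for $X$ a subset of $V^n$ is \emph{not} literally $\{mu:u\in X\}$-closed under the operations in an obvious way — one has to keep track of the kernel $K\cong\mathbb{Z}$ and the fact that ``$u$ divisible by $m$ with $u/m\in L\cap\textrm{log }W$'' spreads into several translates once one also knows $u/m'$ for a different $m'$. Getting the reconciliation of two different scaling factors right, and verifying that only finitely many linear pieces and finitely many root-varieties arise, is the technical heart; the atomic-formula analysis in step (i) and closure under $\cup$ are routine by comparison. (This lemma is Lemma 3.2.1 of \cite{Lucy}, so I would also simply cite that source for the detailed verification.)
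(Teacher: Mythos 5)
Your overall strategy --- induction on the structure of a positive quantifier-free formula, handling atomic formulas and then closure under $\cup$ and $\cap$ --- is the natural route and is almost certainly what the cited source \cite{Lucy} does; the paper itself gives no proof, just the citation, so there is no in-paper argument to compare against. Two corrections, one cosmetic and one substantive.

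Cosmetic: $R_0(v_1,v_2)$ is interpreted as $\textrm{exp}(v_1)+\textrm{exp}(v_2)=0$, not $\textrm{exp}(v_1)\textrm{exp}(v_2)=1$; the corresponding variety is $\{(x,y):x+y=0\}$. This does not affect the shape of the argument.

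Substantive: the ``technical heart'' you anticipate in step (ii) does not exist, and the worry about the kernel $K$ and about finitely many translates or roots is a red herring. The map $u\mapsto mu$ on $V^n$ is a $\mathbb{Q}$-linear \emph{bijection}, because $V$ is a $\mathbb{Q}$-vector space; $K$ is the kernel of $\textrm{exp}$, not of scalar multiplication. Consequently $m\cdot A\cap m\cdot B=m\cdot(A\cap B)$ with no correction terms, and each factor can be rewritten with a common $m=\textrm{lcm}(m_1,m_2)=m_ik_i$ as a \emph{single} set of the required form: writing $v/m_i=k_i(v/m)$, the condition $v/m_i\in L_i$ becomes $v/m\in \tfrac{1}{k_i}L_i$, which is a single linear set (dilation by a rational is a definable $\mathbb{Q}$-linear bijection), and $\textrm{exp}(v/m_i)\in W_i$ becomes $\textrm{exp}(v/m)\in Y_i$ where $Y_i=\{x:x^{k_i}\in W_i\}$ is a single variety (substitute $x_j^{k_i}$ into the defining polynomials of $W_i$). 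Note that $Y_i$ is the full preimage under the $k_i$-th power map, not a ``$k_i$-th root'' in the paper's sense (those are the irreducible components $Z$ of $Y_i$ with $Z^{k_i}=W_i$), so no case split over finitely many roots is needed. One then gets $m_1\cdot(L_1\cap\textrm{log }W_1)\cap m_2\cdot(L_2\cap\textrm{log }W_2)=m\cdot\bigl((\tfrac{1}{k_1}L_1\cap\tfrac{1}{k_2}L_2)\cap\textrm{log}(Y_1\cap Y_2)\bigr)$ in one step. With this simplification your proof closes cleanly; the real content of the lemma is concentrated in your step (i), where the $f_q$'s force the dilation factor $m$ to appear, and that part of your analysis is essentially correct.
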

 
\begin{corollary}\label{variety}[\cite{Lucy}, Corollary 3.2.1]
Let $C$ be a set on the cover, definable by a positive, quantifier-free formula (i.e. a basic closed set in the PQF-topology).
Then $\textrm{exp}(C)$ is a Zariski closed set on the field.
\end{corollary}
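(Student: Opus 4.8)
The plan is to reduce to a single ``building block'' via Lemma~\ref{PQFbasicform}, push it through $\textrm{exp}$, and recognise the result as the image of a Zariski closed set under a finite morphism. By Lemma~\ref{PQFbasicform}, $C$ is a finite union of sets of the form $m\cdot(L\cap\textrm{log }W)$ with $L$ linear, $W$ a variety and $m\in\mathbb{N}$. Since $\textrm{exp}$ commutes with finite unions and a finite union of Zariski closed sets is Zariski closed, it is enough to handle one block.

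First I would compute $\textrm{exp}(m\cdot(L\cap\textrm{log }W))$. As $\textrm{exp}\colon(V,+)\to(F^{*},\cdot)$ is a homomorphism, multiplication by $m$ on $V$ turns into the coordinatewise $m$-th power map $p_m$ on $(F^{*})^{n}$, so
\[
\textrm{exp}\big(m\cdot(L\cap\textrm{log }W)\big)=p_m\big(\textrm{exp}(L)\cap W\big),
\]
using that $\textrm{exp}(L\cap\textrm{log }W)=\textrm{exp}(L)\cap W$ (the inclusion ``$\subseteq$'' is clear; conversely, if $y=\textrm{exp}(v)\in W$ with $v\in L$, then $v\in L\cap\textrm{log }W$). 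Thus it remains to show that $p_m(\textrm{exp}(L)\cap W)$ is Zariski closed. Here I use that $\textrm{exp}(L)$ is a torus (remark after Lemma~\ref{torusom}), and that a torus $T\subseteq(F^{*})^{n}$ is closed in $(F^{*})^{n}$: clearing negative exponents rewrites each relation $\prod_i x_i^{z_i}=c$ as a polynomial equation, so $T=\overline{T}\cap(F^{*})^{n}$ with $\overline{T}$ a variety. Hence $\textrm{exp}(L)\cap W=\overline{\textrm{exp}(L)}\cap W\cap(F^{*})^{n}$ is closed in $(F^{*})^{n}$, i.e.\ of the form (variety)$\,\cap\,(F^{*})^{n}$; this is the sense in which $\textrm{exp}$ of a PQF-closed set is ``Zariski closed'', since $\textrm{exp}$ takes values in $(F^{*})^{n}$.

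The crux is the final step: that $p_m$ sends closed subsets of $(F^{*})^{n}$ to closed subsets. For this I would note that $p_m\colon(F^{*})^{n}\to(F^{*})^{n}$ is a finite morphism (the ring $F[x_1^{\pm1},\dots,x_n^{\pm1}]$ is free of rank $m^{n}$ over the subring generated by $x_1^{\pm m},\dots,x_n^{\pm m}$), hence a closed map; so $p_m(\textrm{exp}(L)\cap W)$ is closed. Taking the union over the finitely many blocks completes the proof. Apart from this finiteness input and the observation that $\textrm{exp}(L)$, being a torus, is already Zariski closed in $(F^{*})^{n}$, the argument is pure bookkeeping around Lemma~\ref{PQFbasicform}.
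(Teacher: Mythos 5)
The paper does not actually give a proof of this corollary; it is imported verbatim from Burton's thesis (\cite{Lucy}, Corollary 3.2.1), so there is no in-paper argument to compare against. Your proof is correct, and its structure is the natural one: reduce via Lemma~\ref{PQFbasicform} to a single block $m\cdot(L\cap\textrm{log }W)$, observe that $\textrm{exp}$ turns this into $p_m(\textrm{exp}(L)\cap W)$, and then push closedness through $p_m$.

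Two remarks. First, you are right to pause on what ``Zariski closed on the field'' should mean: since $\textrm{exp}$ lands in $(F^*)^n$, the honest statement is that $\textrm{exp}(C)$ is closed in $(F^*)^n$, equivalently of the form $(\text{variety})\cap(F^*)^n$; taking $C=V^n$ already shows literal closedness in $F^n$ can fail. The paper (and subsequent uses in Remarks~\ref{ranksama} and~\ref{kuntapqf}) implicitly works in $(F^*)^n$, and your reading is the correct one. Second, the crux step — that $p_m$ is a closed map — is exactly where one needs a genuine input; your appeal to finiteness of $p_m\colon(F^*)^n\to(F^*)^n$ (the Laurent ring being free of rank $m^n$ over the image subring) is the standard and clean way to do it, and finite morphisms of affine varieties being closed is a well-known consequence of going-up. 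An alternative with the same content is to view $p_m$ as the quotient by the finite group $\mu_m^n$ acting by coordinatewise multiplication, so that the image of a closed set is closed; either route works, and nothing is missing.
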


We will show that all irreducible sets are actually definable by positive quantifier-free formulae.
First, we give a canonical way to write any irreducible  variety $W$ as $W=T \cap W'$, where $T$ is a torus and $W'$ is not contained in any proper torus.
This will allow us to deal with roots nicely - using a suitable linear set $L$ when dealing with $T$ and 
Theorem \ref{ZilberTheorem} when dealing with $W'$.

\begin{lemma}\label{leikkaus}
Any irreducible variety $W \subset F^n$ can be written as $W' \cap T$ where $T$ is the minimal torus containing $W$ (note that this could be $F^n$) and $W'$ is an irreducible  variety not contained in any proper torus.
\end{lemma}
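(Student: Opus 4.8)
The plan is: first extract the minimal torus $T$ containing $W$, then use Lemma~\ref{torusbirat} to put $T$ into coordinate form, and finally obtain $W'$ as the cylinder over the projection of $W$ along the ``free'' directions of $T$. If $W$ is contained in no proper torus there is nothing to do — take $T = F^n$ (the improper torus) and $W' = W$ — so assume from now on that $W$ lies in some proper torus. Throughout I work with $W\subseteq (F^*)^n$, which is the only case used later since $\log$ only sees $(F^*)^n$; one passes to Zariski closures in $F^n$ at the end, which is harmless here because in the coordinates provided by Lemma~\ref{torusbirat} every set involved is a product with a full coordinate factor.

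First I would show there is a unique minimal torus $T$ containing $W$, and that it is proper and irreducible. Existence: the honest subtori of $(F^*)^n$ that contain $W$ form a nonempty family of closed subsets of the Noetherian space $(F^*)^n$, so their intersection equals a finite subintersection, which is again a torus by Lemma~\ref{torusom}(a); its Zariski closure $T$ in $F^n$ is then a torus contained in every torus containing $W$, hence minimal, and it is proper by our running assumption. Irreducibility of $T$: by Lemma~\ref{torusom}(b) each irreducible component of $T$ is a torus, and since $W$ is irreducible it lies in one such component, which is then a torus containing $W$ and therefore contains $T$ by minimality; so $T$ coincides with that component.

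Next I would apply Lemma~\ref{torusbirat} to the irreducible torus $T$: it gives a monomial coordinate change $\phi$ of $(F^*)^n$ — a biregular isomorphism whose inverse is again monomial, so both $\phi$ and $\phi^{-1}$ carry tori to tori — under which $T$ becomes $\{y : y_i = c_i,\ 1\le i\le k\} = \{(c_1,\dots,c_k)\}\times (F^*)^{n-k}$. Put $\tilde{W} = \phi(W)\subseteq \phi(T)$, so $\tilde{W} = \{(c_1,\dots,c_k)\}\times W''$ where $W''$ is the image of $\tilde{W}$ under projection to the last $n-k$ coordinates; this projection restricts to an isomorphism $\phi(T)\to (F^*)^{n-k}$, so $W''$ is irreducible. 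The key claim is that $W''$ is contained in no proper torus of $(F^*)^{n-k}$: if $W''\subseteq S'\subsetneq (F^*)^{n-k}$, then $\{(c_1,\dots,c_k)\}\times S'$ is a torus (defined by the torus equations $y_i=c_i$ together with those of $S'$) that contains $\tilde{W}$ and is properly contained in $\phi(T)$; pulling back by $\phi^{-1}$ produces a torus containing $W$ and strictly smaller than $T$, contradicting minimality. Granting this, set $\tilde{W}' = (F^*)^k\times W''$: it is irreducible, and it lies in no proper torus, for if $\tilde{W}'\subseteq S$ then the first $k$ coordinates being unconstrained on $\tilde{W}'$ forces the defining equations of $S$ to avoid $y_1,\dots,y_k$, so $S = (F^*)^k\times S''$ with $W''\subseteq S''$, whence $S'' = (F^*)^{n-k}$ and $S$ is improper. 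Since $\tilde{W}'\cap\phi(T) = \{(c_1,\dots,c_k)\}\times W'' = \tilde{W}$, the variety $W' := \phi^{-1}(\tilde{W}')$ (taken as its Zariski closure in $F^n$) is irreducible, contained in no proper torus, and satisfies $W'\cap T = \phi^{-1}(\tilde{W}')\cap \phi^{-1}(\phi(T)) = \phi^{-1}(\tilde{W}) = W$, giving the desired decomposition.

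I expect the main obstacle to be the key claim of the last paragraph — that factoring out the minimal torus leaves a variety $W''$ with no residual toric part; this is precisely the step where minimality of $T$ is used essentially, and everything else is bookkeeping. A secondary technical point is that the coordinate change of Lemma~\ref{torusbirat} is only birational as a map $F^n\dashrightarrow F^n$, so one must make sure that irreducibility, containment in tori, and the intersection $W'\cap T$ are transported faithfully; this causes no trouble because $\phi$ is an honest isomorphism on $(F^*)^n$ and, in the new coordinates, all sets in play are products with a full factor, so forming Zariski closures in $F^n$ commutes with the operations performed.
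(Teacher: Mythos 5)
Your proposal is correct and follows essentially the same route as the paper: apply Lemma~\ref{torusbirat} to put $T$ in the normal form $\{x_i = c_i : 1 \le i \le k\}$, then take $W'$ to be the cylinder over the projection of $W$ onto the remaining coordinates, using minimality of $T$ to see that $W'$ sits in no proper torus. You fill in a few details the paper leaves terse (existence and irreducibility of the minimal torus, the argument that any torus containing $(F^*)^k \times W''$ must split off the full $(F^*)^k$ factor), but the decomposition and the key use of minimality are the same.
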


\begin{proof}  
By Lemma \ref{torusbirat}, we may assume $T$ is (without loss of generality) given by equations $x_i=c_i$ for $1\le i \le k$ where $k \le n$.
Let $a$ be a generic point of $W$ and let $I \subseteq F[x_{k+1}, \ldots, x_{k_n}]$ be the ideal consisting of all polynomials $f$ such that $f(a)=0$.
Let $J=\langle I \rangle \subseteq F[x_1, \ldots, x_n]$, the ideal generated by $I$ in $F[x_1, \ldots, x_n]$.
Let $W'$ be the variety associated to $J$.
Since the ideal $J$ does not contain any of the polynomials $x_i-c_i$ for $1 \le i \le k$ and since $T$ is the minimal torus containing $W$, the variety $W'$ is not contained in any torus.

It remains to show that $W'$ is irreducible. Indeed, since $W$ is irreducible, also the variety $V(I)$ given by the ideal $I$ is irreducible.
Now, $W'=F^k \times V(I)$ which is irreducible. 
\end{proof}
  
We still need two lemmas before being able to show that irreducible sets are definable by positive, quantifier-free formulae.
 
\begin{lemma}\label{singlemth}
 Let $T$ be an irreducible torus and let $L\subset V^n$ be a linear set such that $\textrm{exp}(L)=T$.
Then, for each $m$, $\textrm{exp}(\frac{L}{m})=X$, where $X$ is an $m$:th root of $T$.
\end{lemma}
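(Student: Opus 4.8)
\textbf{Proof plan for Lemma \ref{singlemth}.} The plan is to reduce to the canonical form provided by Lemma \ref{torusbirat} and then verify the statement by a direct computation. First I would note that $\frac{L}{m}=\{\frac{v}{m}\,:\,v\in L\}$ is again a linear set: scaling an affine $\mathbb{Q}$-subspace of $V^n$ by $\frac{1}{m}$ yields an affine $\mathbb{Q}$-subspace, and $f_{1/m}$ belongs to the language, so $\textrm{exp}(\frac{L}{m})$ is a torus by the remark that the $\textrm{exp}$-image of a linear set is a torus. The equality $\textrm{exp}(\frac{L}{m})^m=T$ is immediate and needs nothing special: if $w\in\textrm{exp}(\frac{L}{m})$, then $w=\textrm{exp}(\frac{v}{m})$ for some $v\in L$, so $w^m=\textrm{exp}(v)\in T$; conversely any $t\in T=\textrm{exp}(L)$ equals $\textrm{exp}(v)$ for some $v\in L$, hence $t=(\textrm{exp}(\frac{v}{m}))^m$. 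Thus the only real content of the lemma is that $\textrm{exp}(\frac{L}{m})$ is \emph{irreducible}.

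To see this I would pass to canonical coordinates. The monomial coordinate change $\Psi$ on $(F^{*})^n$ supplied by Lemma \ref{torusbirat} is given by integer matrices $Z,Z^{-1}$ of determinant $\pm 1$; it lifts to the $\mathbb{Z}$-linear automorphism $\Phi$ of $V^n$ with matrix $Z$, and one checks $\textrm{exp}\circ\Phi=\Psi\circ\textrm{exp}$, that $\Phi$ carries linear sets to linear sets, and that $\Phi$ commutes with multiplication by $\frac{1}{m}$. Since $\Psi$ is a group automorphism and a homeomorphism of $(F^{*})^n$ in the Zariski topology, both $\Psi$ and $\Psi^{-1}$ preserve irreducibility of tori and commute with $m$-th powers, so it is enough to prove the lemma with $L$ replaced by $\Phi(L)$ and $T$ replaced by $\Psi(T)$; in other words we may assume $T$ is defined by $y_i=c_i$ for $1\le i\le k$, with $c_i\in F^{*}$. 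Now I would argue that $\textrm{exp}(L)=T$ forces $L=\{v\in V^n\,:\,v_i=w_i,\ 1\le i\le k\}$ for some $w_i\in V$ with $\textrm{exp}(w_i)=c_i$. Indeed $L\subseteq\textrm{log}\,T$, so for $i\le k$ the $i$-th coordinates of any two points of $L$ differ by an element of $K$; since the projection of $L$ to the $i$-th coordinate is an affine $\mathbb{Q}$-subspace of $V$ contained in a single coset of $K\cong\mathbb{Z}$, it must be a single point $w_i$. The projection $U''$ of $L$ to the last $n-k$ coordinates is then an affine $\mathbb{Q}$-subspace of $V^{n-k}$ with $\textrm{exp}(U'')=(F^{*})^{n-k}$, which forces $U''=V^{n-k}$: otherwise $V^{n-k}/U''$ would be a nonzero divisible group generated by the image of the finitely generated group $K^{n-k}$, which is impossible.

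With $L$ in this shape the computation is immediate: $\frac{L}{m}=\{v\,:\,v_i=\frac{w_i}{m},\ 1\le i\le k\}$, so $\textrm{exp}(\frac{L}{m})=\{d_1\}\times\cdots\times\{d_k\}\times(F^{*})^{n-k}$, where $d_i:=\textrm{exp}(\frac{w_i}{m})$ satisfies $d_i^m=c_i$. Its Zariski closure is the affine-linear subspace $\{d_1\}\times\cdots\times\{d_k\}\times F^{n-k}$, which is irreducible, so $\textrm{exp}(\frac{L}{m})$ is an irreducible torus, and $\textrm{exp}(\frac{L}{m})^m=\{c_1\}\times\cdots\times\{c_k\}\times(F^{*})^{n-k}=T$; hence it is an $m$-th root of $T$. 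Transporting back through $\Psi^{-1}$ gives the statement in the original coordinates. I expect the one delicate point to be this identification of $L$ in canonical coordinates — specifically the remark that $\textrm{exp}(L)=T$ forces the ``free part'' of $L$ to be the whole space $V^{n-k}$ rather than merely a proper $\mathbb{Q}$-subspace with the same image under $\textrm{exp}$; the rest is bookkeeping about the monomial coordinate change together with the direct computation above.
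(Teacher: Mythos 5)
Your proof is correct and takes the same route as the paper: reduce to canonical coordinates via Lemma \ref{torusbirat} and then observe that, since $\textrm{exp}(L)=T$ forces the first $k$ coordinates of $L$ to be single points, $\frac{L}{m}$ has those coordinates pinned as well, so $\textrm{exp}$ sends it to a single $m$:th root. The paper's proof is terser and leaves several of your intermediate checks implicit (that the monomial change of variables lifts to a $\mathbb{Z}$-linear automorphism of $V^n$ commuting with $\textrm{exp}$ and with scaling by $\frac{1}{m}$, that the first $k$ coordinates of $L$ really are constant because an affine $\mathbb{Q}$-subspace inside a coset of $K\cong\mathbb{Z}$ must be a point, and that the ``free part'' of $L$ is all of $V^{n-k}$), so your write-up is essentially a more detailed version of the same argument.
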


\begin{proof}
By Lemma \ref{torusbirat}, we may assume $T$ is given by a finite set of equations of the form
\begin{eqnarray*}  
x_i-c_i=0,
\end{eqnarray*}
$1 \le i \le k$, where $1 \le k \le n$ and $c_i \in F^*$ for each $i$.
Let $\zeta_{ij}$ ($j=1, \ldots, m$) be the $m$:th roots of $c_i$.
An $m$:th root of $T$ then satisfies, for each $i$, exactly one of the equations
\begin{eqnarray*} 
x_i-\zeta_{ij}=0.
\end{eqnarray*}
 If $\frac{L}{m}$ were to contain some elements $a=(a_1, \ldots, a_n)$ and $b=(b_1, \ldots, b_n)$ that would map into distinct roots under exp, then  
$\textrm{exp}(a_i) \neq \textrm{exp}(b_i)$ for some $1 \le i \le k$.
This is impossible since both $a$ and $b$ satisfy the set of linear equations giving $\frac{L}{m}$.
\end{proof}
 
\begin{lemma}\label{linearirre}
Let $C \subset V^n$ be PQF-closed and irreducible, and let $L \subset V^n$ be linear.
Suppose $C \subset \bigcup_{k \in K^n} L+k$.
Then, there exists some $k \in K^n$ such that $C \subset L+k$.
 \end{lemma}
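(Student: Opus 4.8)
The plan is to exploit irreducibility of $C$ together with Remark \ref{linearintersect}, which says that two distinct $K^n$-translates of a linear set are disjoint. First I would observe that the family $\{L+k : k \in K^n\}$ is countable (since $K \cong \mathbb{Z}$), and that by Remark \ref{linearintersect} the sets $C \cap (L+k)$, as $k$ ranges over $K^n$, are pairwise disjoint: if $C \cap (L+k)$ and $C \cap (L+k')$ both meet, then $(L+k) \cap (L+k') \neq \emptyset$, hence $L+k = L+k'$. So the hypothesis $C \subseteq \bigcup_{k \in K^n}(L+k)$ exhibits $C$ as a countable disjoint union of the (relatively closed) sets $C_k := C \cap (L+k)$. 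Each $C_k$ is PQF-closed in $C$, being the intersection of $C$ with the closed set $L+k$.

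The key step is then to upgrade "irreducible cannot be a union of two proper closed subsets" to "irreducible cannot be a countable disjoint union of proper closed subsets, unless one of them is everything." For a finite disjoint union this is immediate by induction from the definition of irreducibility. For the countable case, suppose for contradiction that every $C_k$ is a proper subset of $C$. Group the union as $C = C_{k_0} \cup \bigl(\bigcup_{k \neq k_0} C_k\bigr)$ for a fixed $k_0$; the first piece is closed and proper, so irreducibility forces $C = \bigcup_{k \neq k_0} C_k$, i.e. $C_{k_0} = \emptyset$. Since $k_0$ was arbitrary, every $C_k$ is empty, contradicting $C \neq \emptyset$. Hence some $C_{k}$ equals $C$, which is exactly the conclusion $C \subseteq L + k$.

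The one point that needs care — and which I expect to be the main obstacle — is the claim that $\bigcup_{k \neq k_0} C_k$ is PQF-closed: a countable union of closed sets need not be closed, and the PQF-topology is explicitly non-Noetherian. This is where the disjointness from Remark \ref{linearintersect} does real work. Because the $C_k$ are pairwise disjoint and $C \subseteq \bigcup_k (L+k)$, we have $\bigcup_{k \neq k_0} C_k = C \setminus C_{k_0} = C \cap \bigl(V^n \setminus (L+k_0)\bigr)$ only if $L + k_0$ is also \emph{open} in the relevant sense, which it is not in general. The correct route is instead: $\bigcup_{k\neq k_0} C_k = C \cap \bigcap_{k_0}\bigl(\text{complement of }\dots\bigr)$ — so rather than fight this, I would argue directly that $C \setminus (L+k_0)$ is closed in $C$ by noting $C \setminus (L+k_0) = C \cap \overline{C \setminus (L + k_0)}$, using that $C \setminus (L+k_0) = \bigcup_{k \neq k_0}(C \cap (L+k))$ is \emph{already} contained in the closed set $\bigcup_{k\neq k_0}(L+k)$'s closure — but the cleanest fix is to replace the "group into two pieces" step with the following: since $C$ is irreducible and $C = C_{k_0} \cup (C\setminus C_{k_0})$ with $C_{k_0}$ closed, it suffices to know $C \setminus C_{k_0}$ is closed, and $C\setminus C_{k_0}$ equals the closure (inside $C$) of $\bigcup_{k\neq k_0} C_k$ precisely because the $C_k$ are relatively clopen — each $C_k = C \cap (L+k)$ is open in $C$ as well, since its complement in $C$ is $\bigcup_{k' \neq k} C_{k'}$, which is covered by $\bigcup_{k'\neq k}(L+k')$, a set disjoint from the (relatively open) $L+k$. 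I would spell this clopen-decomposition argument out carefully, as it is the crux; everything else is the routine finite-irreducibility induction.
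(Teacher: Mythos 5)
Your proposal correctly identifies the sticking point — that $C = \bigcup_{k \in K^n} C_k$ is a \emph{countable} union of closed sets, and irreducibility only forbids \emph{finite} non-trivial decompositions into closed sets — but the fix you propose does not work, and the argument you give for it is circular. You assert that each $C_k = C \cap (L+k)$ is relatively \emph{open} in $C$ ``since its complement in $C$ is $\bigcup_{k'\neq k} C_{k'}$, which is covered by $\bigcup_{k'\neq k}(L+k')$, a set disjoint from the (relatively open) $L+k$.'' But the relative openness of $L+k$ is precisely what you are trying to establish; disjointness of $\bigcup_{k'\neq k}(L+k')$ from $L+k$ does not make $\bigcup_{k'\neq k}(L+k')$ PQF-closed, and in fact it typically is not. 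In the PQF-topology, closed sets are arbitrary intersections of finite unions of positive-quantifier-free-definable sets, and removing a single translate from the countable union $\textrm{log}(\textrm{exp}(L)) = \bigcup_{k\in K^n}(L+k)$ is generally not achievable this way (consider already $L=\{0\}$, $n=1$: the set $K\setminus\{0\}$ is not PQF-closed). So the clopen-decomposition claim has a genuine gap, not a fixable technicality, and your argument halts at exactly the point you flagged as ``the crux.''

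The paper's proof goes around the infinite-union problem by converting it into a \emph{finite} decomposition, using the algebra of roots of a torus. After translating, it assumes for contradiction that $C$ meets $L$ at some point $a$ and meets $L+k$ at some point $b$ with $k \in K^n\setminus L$. Writing $T=\textrm{exp}(L)$ and passing to the irreducible component $T'$ of $T$ that $\exp(a)$ and $\exp(b)$ both lie on (they lie on the same one by irreducibility of $C$), one lets $L'\subseteq L$ be linear with $\textrm{exp}(L')=T'$. Then by Lemma~\ref{singlemth}, each of $\frac{L'}{M}$ and $\frac{L'+k}{M}$ maps under $\exp$ onto a \emph{single} $M$:th root of $T'$, and for $M$ large enough these two roots are \emph{distinct}. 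But a torus has only finitely many $M$:th roots $T_1^{1/M},\ldots,T_r^{1/M}$, so the sets $\{v\in C : \exp(v/M)\in T_i^{1/M}\}$ for $i=1,\ldots,r$ give a \emph{finite} cover of $C$ by PQF-closed sets; $a$ and $b$ lie in different pieces, so it is a non-trivial decomposition, contradicting irreducibility directly. The idea you are missing is this passage to $M$:th roots: it replaces the intractable countable decomposition by translates with a finite decomposition by root membership, which irreducibility can see.
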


\begin{proof}
Translating $L$ and $C$ if needed, we assume towards a contradiction that $L$ is a $\mathbb{Q}$- vector space and there exist $a \in C \cap L$ and $b \in C \cap (L+k)$ for some $k \in K^n \setminus L$.
Denote $T=\textrm{exp}(L)$.
Irreducibility of $C$ implies that $a$ and $b$ map under exp to the same irreducible component of $T$, say, $T'$.   
Let $L' \subseteq L$ be a linear set such that $\textrm{exp}(L')=T'$ and $a \in L'$.
We may replace $L$ with $L'$.
By Lemma \ref{singlemth}, $\frac{L'}{M}$ is mapped onto a single $M$:th root of $T'$ under the map exp for each $M$, and so is $\frac{L'+k}{M}$.
Choosing $M$ large enough, these roots will be distinct.
Thus, if $T_1^{\frac{1}{M}}, \ldots, T_r^{\frac{1}{M}}$ are the $M$:th roots of $T$, then $\bigcup_{i=1}^r (\textrm{log}(T_i^{\frac{1}{M}} \cap C)$ is a non-trivial decomposition of $C$.
\end{proof}

The following lemma gives a canonical form for the irreducible sets.
It also implies that in particular, they are definable.
 
\begin{lemma}\label{irredform}[\cite{Lucy}, Lemma 3.2.3]
An irreducible PQF-closed set $C \subseteq V^n$ has the form 
\begin{displaymath}
C=L \cap m \cdot \textrm{log } W,
\end{displaymath}
where $L$ is a linear set, $W$ is a variety that does not branch, and $m \in \mathbb{N}$. 
\end{lemma}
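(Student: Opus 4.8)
The plan is to start from the structural description of basic PQF-closed sets (Lemma \ref{PQFbasicform}) and push it through the irreducibility hypothesis, using the torus/non-torus decomposition (Lemma \ref{leikkaus}) together with the linearity lemmas just proved. First I would observe that a general PQF-closed set $C$ is an intersection of basic closed sets, each of which, by Lemma \ref{PQFbasicform}, is a finite union of sets of the form $m\cdot(L\cap\textrm{log }W)$. Applying $\textrm{exp}$ and using Corollary \ref{variety}, the Zariski closure of $\textrm{exp}(C)$ is a variety; since $C$ is irreducible and $\textrm{exp}$ is (coordinatewise) a homomorphism with good topological behaviour, $\overline{\textrm{exp}(C)}$ is an irreducible variety, call it $W_0$. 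So $C\subseteq \textrm{log }W_0$, and I want to replace $W_0$ by a non-branching $W$ and intersect with a suitable linear $L$.

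The next step is to invoke Lemma \ref{leikkaus} to write $W_0 = W' \cap T$, where $T$ is the minimal torus containing $W_0$ and $W'$ is an irreducible variety contained in no proper torus. Pick a linear set $L_T$ with $\textrm{exp}(L_T)=T$ (possible by the remark after Lemma \ref{torusbirat}, passing to the relevant irreducible component of the torus as in the proof of Lemma \ref{linearirre}). Since $C\subseteq\textrm{log }T = \bigcup_{k\in K^n}(L_T+k)$ up to the component analysis, Lemma \ref{linearirre} gives a single coset $L:=L_T+k$ with $C\subseteq L$. Now $C\subseteq L\cap\textrm{log }W_0$, and on this set the torus part $T$ is "absorbed" into $L$: concretely, by Lemma \ref{singlemth}, each $\frac{L}{M}$ maps onto a single $M$:th root of $T$, so tracking roots of $W_0$ along elements of $C$ reduces to tracking roots of $W'$. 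Because $W'$ lies in no torus, Theorem \ref{ZilberTheorem} (in the form stated right after its proof) says the roots $W'^{\frac{1}{M}}_{(v)}$ stabilize: there is a finite level $m_0$ beyond which the $m_0$:th root determines all higher ones, i.e. the relevant root data is captured by finitely many formulae. Then I would set $W$ to be the appropriate fixed root $W'^{\frac{1}{m_0}}$ (or rather the variety whose $\textrm{log}$, intersected with the right scaled linear set, recovers $C$), scaled back by $m:=m_0$, so that $C = L\cap m\cdot\textrm{log }W$ with $W$ non-branching. Definability of $C$ follows immediately, since linear sets and $m\cdot\textrm{log }W$ are each definable by positive quantifier-free formulae and the class of such is closed under finite intersection.

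The main obstacle, I expect, is the bookkeeping that glues the torus part to the non-torus part: one has to check that intersecting $\textrm{log }W_0$ with the coset $L$ genuinely kills the infinite branching coming from $T$ while leaving exactly the (finitely describable) branching of $W'$, and that the resulting $W$ can be chosen genuinely non-branching rather than merely "eventually non-branching". This is where Lemma \ref{singlemth} and Remark \ref{linearintersect} do the real work — the former ensures $\frac{L}{M}$ sees only one root of the torus, and the latter controls which cosets of $L$ can meet $C$ — so the argument is really about combining these with the finiteness from Theorem \ref{ZilberTheorem} to land on a single variety $W$ and a single scalar $m$. I would also need to be slightly careful that $C$ is irreducible (not just contained in an irreducible variety), which is what licenses picking a single component $T'$ of $T$ and a single coset of $L$ at each stage; irreducibility is used exactly at those branching points, and I would cite it explicitly each time a decomposition threatens to appear.
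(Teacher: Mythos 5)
Your outline constructs a candidate set $L\cap m\cdot\textrm{log }W$ containing $C$, but it never establishes the reverse inclusion, and that is where the real content of the lemma lies. The paper keeps equality automatic by starting from the representation $C=\bigcap_{i<\kappa}(L_i\cap m_i\textrm{log }W_i)$ over \emph{all} basic closed sets containing $C$ (irreducibility lets one take a single component of each), and then proves that this possibly uncountable intersection collapses to a finite one: the linear part $\bigcap L_i$ stabilizes by Noetherianity of the linear topology, and the variety part stabilizes via a descending Morley rank argument on the finite sub-intersections $W_I$. Your argument replaces all of this by passing to $W_0=\overline{\textrm{exp}(C)}$, decomposing via Lemma \ref{leikkaus}, and choosing $m$ so the relevant root no longer branches. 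That gets you $C\subseteq L\cap m\cdot\textrm{log }W$, but not equality.

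The missing half is genuinely nontrivial. To show $L\cap m\cdot\textrm{log }W\subseteq C$, you would need to verify that every basic closed set $B=m'\cdot(L'\cap\textrm{log }W'')$ containing $C$ also contains $L\cap m\cdot\textrm{log }W$. The linear part is manageable (minimality of $L$ carries over under scaling), but the variety part requires knowing how the $m'$:th roots of $W_0$ interact with the chosen non-branching root $W$: one has to see that $\overline{\textrm{exp}(C/m')}$ coincides with the full intersection $T^{1/m'}_{(i)}\cap W'^{1/m'}_{(j)}$ picked out by $L$ and $W$, which is essentially the content of Lemma \ref{rootsintersect} (proved later, but not relying on this lemma, so it could be imported). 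Without that step — or the paper's Morley-rank collapse — the assertion ``so that $C=L\cap m\cdot\textrm{log }W$'' is not justified. Two smaller points: Corollary \ref{variety} applies to basic closed sets, not a priori to a general PQF-closed $C$ (which is a possibly infinite intersection), so ``$\textrm{exp}(C)$ is Zariski closed'' needs a separate argument or should be weakened to the trivial statement about its closure; and you should justify that $L$ can be taken to be the \emph{minimal} linear set containing $C$ — this uses Noetherianity of the linear topology, which you don't invoke but the paper does explicitly.
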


\begin{proof}
Using Lemma \ref{PQFbasicform}, we may without loss assume
\begin{displaymath}
C=\bigcap_{i<\kappa} (L_i \cap m_i \textrm{log } W_i)
\end{displaymath}
for some cardinal $\kappa$.
By Noetherianity of the linear topology on $V^n$, the linear part stabilizes, so writing $L=\bigcap_{i < \kappa} L_i$, we get
\begin{eqnarray}\label{cee}
C= L \cap \bigcap_{i <\kappa} m_i \textrm{log } W_i.
\end{eqnarray}
Using Lemmas \ref{leikkaus} and \ref{linearirre}, we may further assume that each $W_i$ is an irreducible variety not contained in any torus and that it does not branch. 

Consider the intersections $W_I=\bigcap_{i \in I} \textrm{log } W_i^{\frac{m_i}{M_I}}$, where $M_I=\prod_{i \in I} m_i$ and $I$ ranges over the finite subsets of $\kappa$.
We show that we may choose (after modifying the intersects) $I$ so that $W_I$ is irreducible with minimal Morley rank.
Since $C$ is irreducible, we may without loss assume that $W_I$ is irreducible (if needed, replace it with the irreducible component containing $\textrm{exp}(\frac{C}{M_I})$).
If we wish to enlarge $I$ into $I \cup \{i\}$ for some $i<\kappa$, we first take $W_I^{\frac{1}{m_i}}$ (note that $W_I$ might branch, so this is a union of roots) and replace it by $W_I'$, the $m_i$:th root of $W_I$  
containing $\textrm{exp}(\frac{C}{m_iM_I})$. 
We have $MR(W_I')=MR(W_I)$, and since $W_I'$ is irreducible, $MR(W_I' \cap W_i^{\frac{1}{M_I}})<MR(W_I')$.
Proceeding this way and discarding redundant elements from (\ref{cee}), we will eventually find some finite $I$ such that the variety $W_I$ obtained in this process has minimal Morley rank and is irreducible.

Now, we have $\textrm{exp}(\frac{C}{M_I})=T \cap W_I$, where $T=\textrm{exp}(\frac{L}{M_I})$.
By Lemma \ref{leikkaus}, we may write $T \cap W_I =T' \cap W'$ for some torus $T' \subseteq T$ and some variety $W'$ not contained in any torus. 
Let $N$ be a number such that the $N$:th roots of $W'$ no longer branch, and let $L' \subseteq \frac{L}{M_I}$ be a linear set such that $\textrm{exp}(L')=T'$.
Then, $\frac{C}{M_I N}=\frac{L'}{N} \cap \textrm{log }W$, where $W$ is some $N$:th root of $W'$, so $C$ is of the desired form.
\end{proof}

\begin{corollary}
If $C$ is a closed irreducible subset of the cover, then $\textrm{exp}(C)$ is an irreducible variety.
\end{corollary}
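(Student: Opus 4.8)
The plan is to combine the structure theorem for irreducible PQF-closed sets (Lemma \ref{irredform}) with the elementary topological fact that a continuous image of an irreducible space is irreducible.

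First I would show that $\textrm{exp}(C)$ is Zariski closed. By Lemma \ref{irredform} we may write $C = L \cap m \cdot \textrm{log } W$ for a linear set $L$, a variety $W$ that does not branch, and some $m \in \mathbb{N}$; in particular $C$ is definable by a positive quantifier-free formula, so Corollary \ref{variety} applies and $\textrm{exp}(C)$ is a Zariski closed subset of $F^n$. This is the one place where Lemma \ref{irredform} is genuinely used: an irreducible PQF-closed set need not a priori be a \emph{basic} closed set, and Corollary \ref{variety} requires that.

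Next I would check that $\textrm{exp}\colon V^n \to F^n$ is continuous when $V^n$ carries the PQF-topology and $F^n$ the Zariski topology. It suffices to see that the preimage of each variety $W'$ is PQF-closed; but $\textrm{exp}^{-1}(W') = \textrm{log } W'$, which is a basic PQF-closed set (it is of the form appearing in Lemma \ref{PQFbasicform}, with $L = V^n$ and $m = 1$). Hence $\textrm{exp}$ is continuous. Then I invoke the standard fact that if $f\colon X \to Y$ is continuous and $X$ is irreducible, then $f(X)$ is irreducible in its subspace topology: applied with $X = C$ and $f = \textrm{exp}$, this gives that $\textrm{exp}(C)$ is irreducible as a topological subspace of $F^n$. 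Combining this with the first step, $\textrm{exp}(C)$ is an irreducible Zariski closed set, i.e. an irreducible variety.

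I do not expect a real obstacle; the argument is short once Lemma \ref{irredform} is in hand. The only subtlety worth flagging explicitly is the distinction just noted between ``irreducible PQF-closed'' and ``basic closed'', and the minor point that continuity of $\textrm{exp}$ need only be verified against the basic closed sets of the Zariski topology.
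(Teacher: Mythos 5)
Your argument is correct and fills in what the paper leaves implicit (the corollary is stated with no proof, immediately after Lemma \ref{irredform}): Lemma \ref{irredform} plus Corollary \ref{variety} give closedness, and continuity of $\textrm{exp}$ gives irreducibility. One small point to tighten: your justification that $\textrm{exp}^{-1}(W') = \textrm{log } W'$ is PQF-closed invokes Lemma \ref{PQFbasicform} in the wrong direction --- that lemma says every PQF-definable set is a finite union of sets of the form $m\cdot(L\cap\textrm{log }W)$, not that every set of that form is PQF-definable. The fact you actually need, that $\textrm{log } W'$ is a basic PQF-closed set, is part of the underlying setup of the PQF-topology (it is taken for granted throughout, e.g.\ when Theorem \ref{komponentit} speaks of the irreducible components of $\textrm{log }W$), and should be cited as such rather than deduced from Lemma \ref{PQFbasicform}. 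With that citation fixed, the proof is fine.
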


We are aiming prove that for any irreducible $PQF$-closed $C$ and any variety $W$, it holds that $C$ is an irreducible component of $\textrm{log}(W)$ if and only if $\textrm{exp}(C)$ is an irreducible component of $W$.
We first show that for an irreducible variety $W$, the irreducible components of $\textrm{log}(W)$ are of a certain form (Theorem \ref{komponentit}).
When we study a variety $W$, we will from now on write it as $W=T \cap W'$ where $T$ is the minimal torus containing $W$ and $W'$ is a variety not contained in a torus.
Moreover, we will always assume the variety $W'$ is obtained as in the proof of Lemma \ref{leikkaus}.

We will need the following auxiliary result in the proof of Theorem \ref{komponentit}, which will give us more information of the irreducible components of $\textrm{log } W$ for any variety $W$.
 
 \begin{lemma}\label{rootsintersect}
Let $W=T\cap W' \subset F^n$ be an irreducible variety, and let $m$ be a natural number.
Then, the $m$:th roots of $W$ are exactly the varieties $T^{\frac{1}{m}}_{(i)} \cap W'^{\frac{1}{m}}_{(j)}$, where
 $T^{\frac{1}{m}}_{(i)}$ goes through the $m$:th roots of $T$ and $W'^{\frac{1}{m}}_{(j)}$ goes through the distinct $m$:th roots of $W'$.
\end{lemma}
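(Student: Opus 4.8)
The plan is to reduce, via the birational coordinate change of Lemma~\ref{torusbirat}, to the case where $T$ and $W'$ are ``supported'' on complementary blocks of coordinates, and then to read off the $m$:th roots of $W$ from the irreducible components of the preimage of $W$ under the coordinate-wise $m$:th power map. Recall (from the remarks following the definition of $n$:th roots) that for any variety $Z$ the $m$:th roots of $Z$ are exactly those irreducible components $Y$ of $\{x \mid x^m \in Z\}$ for which $Y^m = Z$.

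First I would apply Lemma~\ref{torusbirat} to change coordinates so that $T$ is given by $x_i = c_i$ for $1 \le i \le k$; by the construction in the proof of Lemma~\ref{leikkaus} we then have $W' = F^k \times V(I)$ for an irreducible variety $V(I) \subseteq F^{n-k}$, so that $W = T \cap W' = \{(c_1,\ldots,c_k)\} \times V(I)$ as varieties (using $c_i \ne 0$). The key point about this coordinate change is that it is monomial, hence commutes with the coordinate-wise $m$:th power map and restricts to a homeomorphism of $(F^*)^n$; since $W$, each of its $m$:th roots, and every $m$:th root of $T$ and of $W'$ lies inside $(F^*)^n$, taking $m$:th roots is compatible with this change of coordinates, and it suffices to argue in the new coordinates.

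There the computation is direct. If $\zeta_{i1},\ldots,\zeta_{im}$ are the $m$:th roots of $c_i$, then $\{x \mid x^m \in W\}$ is the union, over tuples $(j_1,\ldots,j_k)$ with each $j_i \in \{1,\ldots,m\}$, of the pairwise disjoint closed ``slices'' $\{(\zeta_{1j_1},\ldots,\zeta_{kj_k})\} \times \{y \mid y^m \in V(I)\}$, so its irreducible components are precisely the sets $\{(\zeta_{1j_1},\ldots,\zeta_{kj_k})\} \times Z$ with $Z$ an irreducible component of $\{y \mid y^m \in V(I)\}$. Such a component has $m$:th power $\{(c_1,\ldots,c_k)\} \times Z^m$, which equals $W$ iff $Z^m = V(I)$, i.e.\ iff $Z$ is an $m$:th root of $V(I)$. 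Running the same analysis for $T$ and for $W'$ shows that the $m$:th roots of $T$ are the $\{(\zeta_{1j_1},\ldots,\zeta_{kj_k})\} \times F^{n-k}$ and those of $W'$ are the $F^k \times Z$ with $Z$ an $m$:th root of $V(I)$; since $\{(\zeta_{1j_1},\ldots,\zeta_{kj_k})\} \times Z = \bigl(\{(\zeta_{1j_1},\ldots,\zeta_{kj_k})\} \times F^{n-k}\bigr) \cap \bigl(F^k \times Z\bigr)$, this identifies the $m$:th roots of $W$ with the intersections $T^{\frac{1}{m}}_{(i)} \cap {W'}^{\frac{1}{m}}_{(j)}$, the distinctness being immediate.

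The main obstacle I anticipate is essentially bookkeeping: one must verify carefully that the birational coordinate change of Lemma~\ref{torusbirat} really is compatible with taking $m$:th roots — monomiality gives commutation with the $m$:th power map, being a homeomorphism of $(F^*)^n$ gives preservation of irreducibility, and one must check that every variety in sight lies in the torus $(F^*)^n$ where the map is a genuine bijection — together with the routine verification of the claims about irreducible components of a disjoint union of slices and of the products $F^k \times S$. After the reduction, nothing but straightforward computation remains.
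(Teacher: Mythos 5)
Your proof is correct, and it takes a genuinely different route from the paper's, although both ultimately lean on the coordinate change of Lemma~\ref{torusbirat}. The paper argues in two separate halves: given an $m$:th root $X$ of $W$, it applies the decomposition of Lemma~\ref{leikkaus} to $X$ itself, writes $X = T' \cap Y$, and shows separately that $T'$ is an $m$:th root of $T$ (via a Morley-rank/minimality argument) and that $Y$ is an $m$:th root of $W'$ (via an ideal-theoretic calculation carried out in the coordinates of Lemma~\ref{torusbirat}); for the converse it computes $(T^{\frac{1}{m}}_{(i)} \cap W'^{\frac{1}{m}}_{(j)})^m = W$, extracts a root $X$ inside the intersection, and rules out strict containment by multiplying through by a root of unity and cycling. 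You instead move the coordinate change to the very front, put $T$, $W'$, $W$ in block form, and read off \emph{all} $m$:th roots of all three varieties in one pass from the decomposition of the preimage under the $m$:th power map into disjoint slices; both inclusions then fall out of the same computation, and you bypass both the ideal-theoretic step and the cyclic roots-of-unity argument. The one place to be careful — which you flag yourself — is your parenthetical claim that ``every variety in sight lies inside $(F^*)^n$'': this is not literally true (the Zariski closure of a positive-dimensional torus such as $T$, or $W' = F^k \times V(I)$, will in general meet the coordinate hyperplanes). What is true, and what the argument actually needs, is that each of these irreducible varieties has dense intersection with $(F^*)^n$, and that the $m$:th power map is a finite morphism so images of closed sets are closed; together these make the passage between $(F^*)^n$ and closures in $F^n$ across the monomial homeomorphism harmless. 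Since the paper's own proof also invokes Lemma~\ref{torusbirat} (in the middle of verifying $Y^m = W'$), this bookkeeping is not overhead that your approach adds — it is already implicit in the paper.
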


\begin{proof} 
Let $X$ be a $m$:th root of $W$.
Write $X=T' \cap Y$, where $T'$ is the minimal torus containing $X$ and $Y$ is a variety not contained in any torus, obtained as in the proof of Lemma \ref{leikkaus}.
There is some $m$:th root $T^{\frac{1}{m}}_{(i)}$ of $T$ such that $X \cap T^{\frac{1}{m}}_{(i)} \neq \emptyset$.
Then,
$(T^{\frac{1}{m}}_{(i)} \cap X)^m =W.$
Hence, 
$\textrm{MR}(T^{\frac{1}{m}}_{(i)} \cap X)=\textrm{MR}(W)=\textrm{MR}(X),$  
so $X \cap T^{\frac{1}{m}}_{(i)}=X$ by irreducibility of $X$.
Thus $T' \subseteq T^{\frac{1}{m}}_{(i)}$ by the minimality of $T'$.
On the other hand, $W \subseteq T'^m$ and hence $T \subseteq T'^m$.
So, $T' = T^{\frac{1}{m}}_{(i)}$.

To prove that $Y$ is an $m$:th root of $W'$, it suffices, by irreducibility of $Y$, to show that $Y^m=W'$.
By Lemma \ref{torusbirat}, we may assume that $T$ is give by equations $x_i=c_i$, $1 \le i \le k$ for some $k \le n$, where $c_i \in F^{*}$, and that $W'$ is given by the ideal $J_{W'}=\langle I_{W'} \rangle$, where $I_{W'}$ consists of all the polynomials $f \in F[x_{k+1}, \ldots, x_n]$ such that $f(a)=0$ for a generic point $a \in W$.
Let $J_Y=\langle I_Y \rangle$ be the ideal corresponding to $Y$, obtained similarly.
For every $f \in I_{W'}$, the polynomial $f(x_{k+1}^m, \ldots, x_n^m)$ is in $I_Y$, so $Y^m \subseteq W'$.
To prove $W' \subseteq Y^m$, we show that $I(Y^m) \subseteq J_{W'}$ for $I(Y^m)$ the ideal corresponding to $Y^m$.
Let $g$ be a generator of $I(Y^m)$.
Then, $g \in F[x_{k+1}, \ldots, x_n]$ and $g(x_{k+1}^m, \ldots, x_n^m) \in I(Y) \subseteq I(X)$.
Since $W=X^m$, we have by construction of $W'$ that $g \in J_{W'}$.
Thus, $Y^m=W'$ and $Y$ is an $m$:th root of $W'$
This concludes the first part of the proof.
  
Let now $T^{\frac{1}{m}}_{(i)}$ be an $m$:th root of $T$ and $W'^{\frac{1}{m}}_{(j)}$ an $m$:th root of $W'$.
We show that $T^{\frac{1}{m}}_{(i)} \cap W'^{\frac{1}{m}}_{(j)}$ is an $m$:th root of $W$.
Since
$(T^{\frac{1}{m}}_{(i)} \cap W'^{\frac{1}{m}}_{(j)})^m=W,$
there is some $m$:th root $X$ of $W$ such that  $X \subseteq T^{\frac{1}{m}}_{(i)} \cap W'^{\frac{1}{m}}_{(j)}.$
We have already seen that $X=T^{\frac{1}{m}}_{(i')} \cap W'^{\frac{1}{m}}_{(j')}$ for some $m$:th roots $T^{\frac{1}{m}}_{(i')}$ and $W'^{\frac{1}{m}}_{(j')}$ of $T$ and $W'$, respectively.
Clearly, $T^{\frac{1}{m}}_{(i')} =T^{\frac{1}{m}}_{(i)}$.

Suppose for the sake of contradiction that $X \subsetneq T^{\frac{1}{m}}_{(i)} \cap W'^{\frac{1}{m}}_{(j)}.$ 
Let $\zeta=(1, \ldots, 1, \zeta_{k+1}, \ldots, \zeta_n)$ be an $m$:th root of unity such that $W'^{\frac{1}{m}}_{(j)}=\zeta W'^{\frac{1}{m}}_{(j')}$.
Since $\zeta T^{\frac{1}{m}}_{(i)}=T^{\frac{1}{m}}_{(i)}$, we get, multiplying
the equation
$$T^{\frac{1}{m}}_{(i)} \cap W'^{\frac{1}{m}}_{(j')} \subsetneq  T^{\frac{1}{m}}_{(i)} \cap \zeta W'^{\frac{1}{m}}_{(j')}$$
successively by powers of $\zeta$,  
\begin{eqnarray*}
T^{\frac{1}{m}}_{(i)} \cap W'^{\frac{1}{m}}_{(j')} \subsetneq  T^{\frac{1}{m}}_{(i)} \cap \zeta W'^{\frac{1}{m}}_{(j')}
\subsetneq T^{\frac{1}{m}}_{(i)} \cap \zeta^2 W'^{\frac{1}{m}}_{(j')} \subsetneq
\ldots \subsetneq T^{\frac{1}{m}}_{(i)} \cap \zeta^m W'^{\frac{1}{m}}_{(j')}= T^{\frac{1}{m}}_{(i)} \cap W'^{\frac{1}{m}}_{(j')},
\end{eqnarray*}
a contradiction.
\end{proof}

\begin{theorem}\label{komponentit}
Let $W$ be an irreducible variety on the field, and let $W=W' \cap T$ where $T$ is the minimal torus containing $W$, and $W'$ is contained in no torus.
Let $m$ be the level at which $W'$ stops branching and let $W'^{\frac{1}{m}}_{(i)}$ be the $m$:th roots of $W'$.
Let $L$ be linear such that $\textrm{exp}(L)=T$.
Then, the irreducible components of $\textrm{log } W \subset V^n$ are
\begin{displaymath}
(L+k) \cap m \cdot (\textrm{log } W'^{\frac{1}{m}}_{(i)}) \qquad k \in K^n.
\end{displaymath}
\end{theorem}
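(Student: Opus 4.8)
The plan is to prove three statements about the sets $P_{k,i} := (L+k)\cap m\cdot(\textrm{log }W'^{\frac{1}{m}}_{(i)})$ and then assemble them: (A) $\textrm{log }W=\bigcup_{k\in K^n,\,i}P_{k,i}$; (B) every nonempty $P_{k,i}$ is irreducible; (C) every irreducible PQF-closed subset of $\textrm{log }W$ lies inside some $P_{k,i}$. First I would note that each $P_{k,i}$ is a basic closed set, since ``$v\in L+k$'' is a conjunction of linear equations and ``$v\in m\cdot\textrm{log }W'^{\frac{1}{m}}_{(i)}$'' is the positive quantifier-free condition ``$\textrm{exp}(\frac{v}{m})\in W'^{\frac{1}{m}}_{(i)}$''. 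Granting (A)--(C): by (C) every irreducible closed subset of $\textrm{log }W$ lies in some $P_{k,i}$, and by (B) each $P_{k,i}$ is itself such a subset, so the maximal irreducible closed subsets of $\textrm{log }W$, i.e. the irreducible components, are precisely the maximal members of $\{P_{k,i}\}$; it then remains only to rule out proper inclusions $P_{k,i}\subsetneq P_{k',i'}$, after which every nonempty $P_{k,i}$ is a component and every component is some $P_{k,i}$.

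For (A): since $W=W'\cap T$ we get $\textrm{log }W=\textrm{log }W'\cap\textrm{log }T$, and $\textrm{log }T=\bigcup_{k\in K^n}(L+k)$ because $\textrm{exp}(L)=T$ and the fibres of $\textrm{exp}\raj L$ are $K^n$-cosets (Remark \ref{linearintersect}). It then remains to check $\textrm{log }W'=\bigcup_i m\cdot\textrm{log }W'^{\frac{1}{m}}_{(i)}$, whose nonobvious inclusion reduces to $\{x:x^m\in W'\}=\bigcup_i W'^{\frac{1}{m}}_{(i)}$; I would obtain this from the fact that the $m$-th power map on $F^n$ is finite and flat, so $\{x:x^m\in W'\}$ is equidimensional of dimension $\dim W'$, whence every irreducible component $Y$ of it has $\dim Y^m=\dim W'$ and therefore $Y^m=W'$, i.e. $Y$ is an $m$-th root of $W'$. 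For (C): if $D\subseteq\textrm{log }W$ is irreducible and PQF-closed then $D\subseteq\textrm{log }T$, so Lemma \ref{linearirre} gives a single $k$ with $D\subseteq L+k$; as $L+k\subseteq\textrm{log }T$, we get $D\subseteq(L+k)\cap\textrm{log }W=(L+k)\cap\textrm{log }W'=\bigcup_i P_{k,i}$, a \emph{finite} union of closed sets, so irreducibility of $D$ forces $D\subseteq P_{k,i}$ for some $i$.

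The real work is (B). Fix a nonempty $P_{k,i}$; translating by $-k$ (which fixes $\textrm{log }X$ for every $X$ and sends $W'^{\frac{1}{m}}_{(i)}$ to $\textrm{exp}(-\frac{k}{m})W'^{\frac{1}{m}}_{(i)}$, again a non-branching $m$-th root of $W'$) I may assume $k=0$, so $P:=L\cap m\cdot\textrm{log }X$ with $X:=W'^{\frac{1}{m}}_{(i)}$. Since the closure of a single point is always irreducible, it suffices to produce $c\in P$ with $P\subseteq\overline{\{c\}}$, i.e. such that every conjunctive positive quantifier-free condition $v\in m'(L'\cap\textrm{log }W'')$ (cf. Lemma \ref{PQFbasicform}) satisfied by $c$ holds for all $v\in P$. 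Using Lemma \ref{torusbirat} I would change coordinates on $(F^*)^n$ so that $T=\{y_1=c_1,\dots,y_\kappa=c_\kappa\}$ and (as in the proof of Lemma \ref{leikkaus}) $W'=F^\kappa\times V(I)$ with $V(I)$ irreducible and in no torus; then $P$ lies in the linear set $\{w_1=d_1,\dots,w_\kappa=d_\kappa\}$ and in the remaining coordinates is governed by $\textrm{log}(V(I))$ with one fixed $m$-th root. One computes $\textrm{exp}(\frac{P}{m})=\textrm{exp}(\frac{L}{m})\cap X$, which by Lemmas \ref{singlemth} and \ref{rootsintersect} is an $m$-th root of $W$, so $\textrm{exp}(P)=W$ and I may pick $c\in P$ with $\textrm{exp}(c)$ a generic point of $W$ over a countable algebraically closed submodel $G$ with $G=\textrm{log}(\textrm{exp}(G))$ containing the logs of the parameters of $L,W',X$. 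Genericity makes the last $n-\kappa$ coordinates of $c$ linearly independent over $\mathbb{Q}$ (else $V(I)$ would lie in a proper torus), puts $c$ in no proper linear subset of $L$, and $\textrm{exp}(c)$ in no proper subvariety of $W$. Then Lemma \ref{tyypit} applies to those last $n-\kappa$ coordinates: their quantifier-free type over $G$ is pinned down by the root-membership formulas $\textrm{exp}(\frac{c}{l})\in(\textrm{a root})$, and since $\textrm{exp}(\frac{L}{M})$ is a \emph{single} $M$-th root of $T$ for each $M$ (Lemma \ref{singlemth}) and $X$ no longer branches (by the choice of $m$), each such root is one fixed variety that every $v\in P$ also meets; hence the positive part of $\textrm{qftp}(c)$ over any parameters is implied by membership in $P$, giving $P\subseteq\overline{\{c\}}$.

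Finally, to rule out proper inclusions: if $\emptyset\neq P_{k,i}\subseteq P_{k',i'}$, a point of $P_{k,i}$ lies in $(L+k)\cap(L+k')$, so $L+k=L+k'$ by Remark \ref{linearintersect}; say $k=k'$. Applying $\textrm{exp}(\frac{\cdot}{m})$ and Lemma \ref{singlemth} gives $\textrm{exp}(\frac{P_{k,i}}{m})=S\cap W'^{\frac{1}{m}}_{(i)}\subseteq S\cap W'^{\frac{1}{m}}_{(i')}$ where $S=\textrm{exp}(\frac{L+k}{m})$ is a single $m$-th root of $T$; both sides are $m$-th roots of $W$ by Lemma \ref{rootsintersect}, hence irreducible of the same dimension and so equal, which forces $P_{k,i'}\subseteq P_{k,i}$, i.e. $P_{k,i}=P_{k,i'}$. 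The hard part is (B): one must simultaneously control the infinitely-branching torus factor $T$ (via confinement to the coset $L+k$ and Lemma \ref{singlemth}) and the finitely-branching factor $W'$ (via the fixed root $X$ and the fact that branching has ceased by level $m$), and verify that precisely this data is what Lemma \ref{tyypit} needs in order to certify that the positive quantifier-free type of the generic point of $W$ cuts out $P$.
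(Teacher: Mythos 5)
Your decomposition into (A) covering, (C) confinement, and (B) irreducibility is a clean reorganization, and (A), (C), and the ruling-out of proper inclusions are all essentially sound (and largely parallel the paper). The gap is in (B), and it is a structural one, not a missing detail.

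Your strategy for (B) is to exhibit $c\in P$ with $P\subseteq\overline{\{c\}}$, i.e. such that \emph{every} positive quantifier-free formula over \emph{any} parameters satisfied by $c$ holds for all of $P$. In the PQF-topology that cannot happen unless $P$ is a singleton: the formula $v=c$ is itself a positive quantifier-free formula (and $\{c\}$ is of the form $L'\cap\textrm{log }W''$ with $L'$ the $0$-dimensional affine subspace $\{c\}$ and $W''=F^n$), so $\{c\}$ is a basic closed set and $\overline{\{c\}}=\{c\}$ for every $c$. The topology is $T_1$, so irreducible closed sets of positive dimension are never closures of points, just as $\mathbb{A}^1$ over $F$ is not the Zariski closure over $F$ of any of its points yet is still irreducible. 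Your subsequent appeal to Lemma \ref{tyypit} pins down $\textrm{qftp}(c/G)$ for a fixed countable $G$, and it is correct that the positive part over $G$ is entailed by membership in $P$; but the leap from ``over $G$'' to ``over any parameters'' is exactly where the argument collapses.

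What actually needs to be shown (and what the paper shows) is the \emph{covering} statement: if $P\subseteq\bigcup_{j=1}^r m_j(L_j\cap\textrm{log }W_j)$ is any finite cover by pieces of the form of Lemma \ref{PQFbasicform}, then one piece already contains $P$. This is a genuinely weaker hypothesis than ``$c$ lies in a piece'' — the pieces together must account for \emph{all} of $P$, which is what prevents a degenerate piece like $\{c\}$ from causing trouble — and it is precisely what is needed by the definition of irreducibility. Your generic-point idea can be repaired along these lines: given a purported decomposition $P=Z_1\cup Z_2$ with parameters $d$, first enlarge to a countable $G'=\textrm{acl}(G\cup d)$ and \emph{then} choose $c\in P$ generic over $G'$; the argument you give (Lemma \ref{tyypit} together with Lemma \ref{singlemth} and the non-branching of $X$) then really does show that the smallest basic closed set over $G'$ containing $c$ is $P$, so whichever $Z_j$ contains $c$ must contain $P$. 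The point is that the generic witness has to be chosen \emph{after} the candidate decomposition, not once and for all; as written, you fix $c$ first and then quantify over all parameters, and that order of quantifiers is what breaks.
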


\begin{proof}  
We first prove that the components listed in the statement are irreducible. 
Suppose $C=L \cap m\textrm{log }W$, where $L$ is linear, $W$ does not branch, and $\textrm{exp}(C)=T \cap W^m$ is irreducible with $T=\textrm{exp}(L)$ the minimal torus containing $\textrm{exp}(C)$.
We wish to reduce to the case $C=L \cap \textrm{log }W$ by replacing $C$ with $\frac{C}{m}$.
To do this, we need to make sure that $\textrm{exp}(\frac{C}{m})$ is irreducible.
But $\textrm{exp}(\frac{C}{m})=T^{\frac{1}{m}} \cap W$, where $T^{\frac{1}{m}}$ is a single $m$:th root of $T$.
By Lemma \ref{rootsintersect}, this is an $m$:th root $T \cap W^m$ and thus irreducible. 
 
So we may suppose  $C=L \cap \textrm{log }W$.
Assume $C \subseteq \bigcup_{i=1}^r L_i \cap m_i \textrm{log }W_i$ for linear $L_i$ and varieties $W_i$.
We will show that one of the components of the union contains $C$.
Let $M=\prod_{i=1}^r m_i$ and $M_i=\frac{M}{m_i}$.
Since $W$ does not branch, $W^{\frac{1}{M}}$ is irreducible, so we may replace $C$ with $\frac{C}{M}$ as we did above. 
Moreover, $\textrm{exp}(\frac{1}{M_i} \textrm{log }W_i) =W_i^{\frac{1}{M_i}}$ (here, $W_i^{\frac{1}{M_i}}$ stands for the union of all roots), so we can reduce to the case where $L \cap \textrm{log }W \subseteq \bigcup_{i=1}^r L_i \cap \textrm{log }W_i$.
Intersecting the right hand side with $L$, we may assume that $L_i \subseteq L$ for each $i$.
Intersecting with $\textrm{log }W$ we may assume that $\textrm{log }W_i \subseteq \textrm{log }W$.
Thus, $\textrm{exp}(L_i \cap \textrm{log }W_i) \subseteq T \cap W$ for all $i$, and we have
$$T \cap W \subseteq \bigcup_{i=1}^n \textrm{exp}(L_i \cap \textrm{log }W_i) \subseteq T \cap W,$$
so $T \cap W =\bigcup_{i=1}^n \textrm{exp}(L_i \cap \textrm{log }W_i)$. 
Since it is irreducible, we must have $T \cap W=\textrm{exp}(L_i \cap \textrm{log }W_i)$ for some $i$.
From minimality of $T$, it follows that $L=L_i$.
Then, $W \cap T = W_i \cap T$, and we are done.
  
For maximality, let $L \cap m\textrm{log} W'^{\frac{1}{m}}_{(i)} \subsetneq I \subseteq \textrm{log } W$, where $I$ is a PQF-closed set.
We show that $I$ is reducible.
Either there is some $k \in K^n$ such that $L+k \neq L$ and $(L+k) \cap I \neq \emptyset$, and reducibility follows from Lemma \ref{linearirre}, or $I \cap (m\textrm{log }W'^{\frac{1}{m}}_{(j)} \setminus m \textrm{log }W'^{\frac{1}{m}}_{(i)}) \neq \emptyset$ for some $j \neq i$, in which case we can reduce $I=\bigcup (I \cap m \textrm{log }W'^{\frac{1}{m}}_{(i)})$.
\end{proof}
 
\begin{corollary}\label{olennainen}
If $W\subseteq F^n$ is a variety and $C\subseteq V^n$ is an irreducible subset of the cover, then $C$ is an irreducible component of $\textrm{log }W$ if and only if $\textrm{exp}(C)$ is an irreducible component of $W$.
\end{corollary}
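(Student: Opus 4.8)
The plan is to reduce to the case that $W$ is irreducible, prove the equivalence there, and bootstrap. When $W$ is irreducible the right-hand side says exactly $\textrm{exp}(C)=W$, so I write $W=W'\cap T$ with $T$ the minimal torus containing $W$ and $W'$ torus-free, as in Lemma \ref{leikkaus}. For the direction $(\Rightarrow)$, assume $C$ is an irreducible component of $\textrm{log }W$. By Theorem \ref{komponentit} it has the form $C=(L+k)\cap m\cdot\textrm{log }W'^{\frac{1}{m}}_{(i)}$ with $\textrm{exp}(L)=T$ and $m$ the level at which $W'$ stops branching, and since translation by $-k\in K^n$ permutes these components and preserves $\textrm{exp}$-images, I may take $k=0$. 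The inclusion $\textrm{exp}(C)\subseteq W$ is immediate, as $\textrm{exp}$ of an intersection lies in the intersection of the images, $\textrm{exp}(L)=T$, and $\textrm{exp}(m\cdot\textrm{log }W'^{\frac{1}{m}}_{(i)})=\big(W'^{\frac{1}{m}}_{(i)}\big)^m=W'$. I would get the reverse inclusion by induction on $m$: if $m=1$ then $W'^{\frac{1}{1}}_{(i)}=W'$, and for $y\in W=T\cap W'$ any $v\in L$ with $\textrm{exp}(v)=y$ (such $v$ exists since $y\in T=\textrm{exp}(L)$) automatically has $\textrm{exp}(v)\in W'$, so $v\in C$; if $m\ge 2$ then $\frac{C}{m}=\frac{L}{m}\cap\textrm{log }W'^{\frac{1}{m}}_{(i)}$, and by Lemmas \ref{singlemth} and \ref{rootsintersect} the variety $W^{*}:=\textrm{exp}(\frac{L}{m})\cap W'^{\frac{1}{m}}_{(i)}$ is an $m$-th root of $W$ whose minimal torus is $\textrm{exp}(\frac{L}{m})$ and whose torus-free part $W'^{\frac{1}{m}}_{(i)}$ no longer branches, so $\frac{C}{m}$ is one of the components of $\textrm{log }W^{*}$ listed in Theorem \ref{komponentit}; as $W^{*}$ stops branching at level $1$, the induction hypothesis yields $\textrm{exp}(\frac{C}{m})=W^{*}$, hence $\textrm{exp}(C)=(W^{*})^m=W$.

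For $(\Leftarrow)$, assume $\textrm{exp}(C)=W$ with $C$ irreducible PQF-closed. Then $C\subseteq\textrm{log}(\textrm{exp}(C))=\textrm{log }W$, so being irreducible $C$ lies in some component $D$ of $\textrm{log }W$, and by $(\Rightarrow)$ we have $\textrm{exp}(D)=W=\textrm{exp}(C)$. Suppose towards a contradiction that $C\subsetneq D$. I would fix a countable submodel $G$ with $G=\textrm{log}(\textrm{exp}(G))$ and $\textrm{exp }G$ algebraically closed over which $C$, $D$, $W$ are defined (cf. Theorem \ref{ZilberTheorem}), and pick $v\in D$ with $\textrm{exp}(v)$ generic in $W$ over $\textrm{exp }G$; by Lemma \ref{tyypit} — membership in $D$ pins down the relevant roots — such $v$ realizes the generic PQF-type of $D$ over $G$, hence lies in no proper $G$-definable PQF-closed subset of $D$. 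Since $\textrm{exp}(v)\in W=\textrm{exp}(C)$, there is $\kappa\in K^n$ with $v+\kappa\in C\subseteq D$. Translation by $-\kappa$ permutes the components of $\textrm{log }W$ (shifting the coset and multiplying the root by a root of unity, as above) and sends $D$ to a component $D'$ containing $v$; genericity of $v$ forces $D'=D$, i.e. $D=D-\kappa$. But then $C-\kappa$ is a PQF-closed subset of $D$ containing $v$, so $C-\kappa=D$ and $C=D$, contradicting $C\subsetneq D$.

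For the general case I would write $W=W_1\cup\dots\cup W_r$ into irreducible components; then $\textrm{log }W=\bigcup_j\textrm{log }W_j$ with each $\textrm{log }W_j$ PQF-closed, and by irreducibility every irreducible PQF-closed $C\subseteq\textrm{log }W$ lies in some $\textrm{log }W_j$. Moreover a component of $\textrm{log }W_j$ cannot lie inside $\textrm{log }W_{j'}$ for $j'\ne j$, since by the irreducible case its $\textrm{exp}$-image is all of $W_j$, whereas it would otherwise be contained in $W_j\cap W_{j'}\subsetneq W_j$. Hence the components of $\textrm{log }W$ are precisely the components of the various $\textrm{log }W_j$, and $\textrm{exp}(C)$ is a component of $W$ iff $\textrm{exp}(C)=W_j$ for some $j$, which combined with the irreducible case gives the corollary. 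The step I expect to be the main obstacle is the genericity argument in $(\Leftarrow)$: in the non-Noetherian PQF-topology one must make precise that an irreducible PQF-closed set carries a well-behaved generic type and that a proper $G$-definable PQF-closed subset of a component of $\textrm{log }W$ (for $W$ irreducible) omits it — equivalently, that no proper PQF-closed $C'\subsetneq D$ has $\textrm{exp}(C')=W$. The cleanest route is to combine the canonical form of Lemma \ref{irredform} with Lemma \ref{tyypit}: shrinking $D$ forces a shrinking of either its linear part — and a proper linear subspace has a strictly smaller torus as $\textrm{exp}$-image, in the spirit of Remark \ref{linearintersect} — or its variety part, and in either case the $\textrm{exp}$-image drops below $W$. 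By comparison, the $(\Rightarrow)$ induction is routine bookkeeping with the root calculus of Lemmas \ref{torusbirat}--\ref{rootsintersect}.
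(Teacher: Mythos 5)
You take the same reduction as the paper — pass to irreducible $W$ and deduce the statement from Theorem \ref{komponentit} and Lemma \ref{irredform} — but you supply the details the paper compresses into the word ``immediate.'' Your $(\Rightarrow)$ direction, replacing $C$ by $\frac{C}{m}$ and $W$ by the $m$-th root $W^*=\exp(L/m)\cap W'^{\frac{1}{m}}_{(i)}$ whose torus-free part no longer branches, is a sound way to get $\exp(C)=W$, and the reduction from arbitrary $W$ to irreducible $W$ at the end is also fine.

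The gap is in $(\Leftarrow)$, at the step ``genericity of $v$ forces $C-\kappa=D$.'' Genericity of $\exp(v)$ over $\exp G$ rules out $v$ lying in a $G$-definable PQF-closed set whose $\exp$-image is strictly smaller than $W$; it does not rule out $v$ lying in $C-\kappa$, since $\exp(C-\kappa)=W$ as well. At this point the argument is circular: it silently uses the very claim being proved, that no proper irreducible PQF-closed $C'\subsetneq D$ has $\exp(C')=W$. (By contrast, your preceding step $D'=D$ is legitimate, because when $D'\neq D$ one can check directly that $\exp(D\cap D')\subsetneq W$ — distinct cosets give empty intersection, distinct $m$-th roots give an image of lower dimension.) Your closing sketch is the right way to repair this: using Lemma \ref{irredform}'s canonical form, shrinking $D$ must shrink either the linear or the variety part, and either shrink drops the $\exp$-image. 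But the linear half, ``$L'\subsetneq L$ implies $\exp(L')\subsetneq\exp(L)$,'' is not Remark \ref{linearintersect} and needs its own short argument (if $\exp(L')=\exp(L)$ then $L/L'$ is a divisible abelian group generated by the image of $K^n\cap L$, hence finitely generated, hence trivial), and the variety half similarly needs to be spelled out. Until those two sublemmas are actually established, $(\Leftarrow)$ does not close.
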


\begin{proof}
It suffices to prove the claim when $W$ is irreducible (if it is not, reduce it).
But then this is immediate from Theorem \ref{komponentit} and Lemma \ref{irredform}.
\end{proof}

\section{Dimension Analysis}

In this section, we define dimensions for the PQF-closed sets much in the same way that they are defined in the Noetherian case.
We then prove that the dimension of a PQF-closed set is equal to the dimension of its image under the map exp.
 
\begin{lemma}\label{infchains}
There are no infinite descending chains of irreducible closed subsets of the cover.  
\end{lemma}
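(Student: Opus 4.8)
The plan is to combine two ingredients, both already available. The first is that the Zariski topology on $F^n$ is Noetherian, so any descending chain of varieties in $F^n$ stabilizes. The second is the observation that \emph{every closed irreducible subset of the cover is an irreducible component of the $\textrm{log}$ of its image}: if $C\subseteq V^n$ is closed and irreducible, then by the corollary following Lemma~\ref{irredform} the set $W:=\textrm{exp}(C)$ is an irreducible variety, hence $W$ is its own (unique) irreducible component; applying Corollary~\ref{olennainen} with this particular $W$, we get that $C$ is an irreducible component of $\textrm{log }W=\textrm{log}(\textrm{exp}(C))$, i.e.\ a maximal irreducible subset of $\textrm{log}(\textrm{exp}(C))$ (and of course $C\subseteq \textrm{log}(\textrm{exp}(C))$ trivially).

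Granting this, I would argue by contradiction: suppose $C_0\supsetneq C_1\supsetneq C_2\supsetneq\cdots$ is an infinite strictly descending chain of closed irreducible subsets of the cover. Since the $C_i$ are nonempty and nested, they all lie in a single $V^n$. Each $\textrm{exp}(C_i)$ is a Zariski-closed set (Corollary~\ref{variety}, or the corollary following Lemma~\ref{irredform}), and $\textrm{exp}(C_0)\supseteq\textrm{exp}(C_1)\supseteq\textrm{exp}(C_2)\supseteq\cdots$, so by Noetherianity of $F^n$ this chain stabilizes: there is $N$ with $\textrm{exp}(C_i)=\textrm{exp}(C_N)=:W$ for all $i\ge N$.

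The contradiction is then immediate. From $\textrm{exp}(C_{N+1})=W$ we get $C_{N+1}\subseteq \textrm{log }W$, and by the observation above $C_{N+1}$ is a \emph{maximal} irreducible subset of $\textrm{log }W$. But $C_N$ is also an irreducible subset of $\textrm{log }W$ (it is irreducible, and $\textrm{exp}(C_N)=W$ forces $C_N\subseteq\textrm{log }W$), and $C_N\supsetneq C_{N+1}$, contradicting the maximality of $C_{N+1}$. Hence no such chain exists.

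The only substantive input is the structural fact that an irreducible closed set is a maximal irreducible subset of the $\textrm{log}$ of its (necessarily irreducible) varietal image; this is exactly what the canonical form of irreducible sets (Lemma~\ref{irredform}) together with the correspondence with irreducible components on the field (Corollary~\ref{olennainen}) deliver, so there is no real obstacle once Section~2 is in place. The only points needing a little care are that $\textrm{exp}(C_i)$ is genuinely Zariski-closed, so that the descending chain condition on $F^n$ applies, and that the whole chain lives in one fixed $V^n$.
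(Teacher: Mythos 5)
Your proof is correct and is essentially the paper's proof: the paper also deduces from Corollary~\ref{olennainen} that for irreducible closed $C_1\subsetneq C_2$ one must have $\textrm{exp}(C_1)\subsetneq\textrm{exp}(C_2)$ (since equal images would make both irreducible components of the same $\textrm{log }W$), and then invokes Noetherianity of the Zariski topology. You have merely unfolded that same observation into an explicit argument by contradiction with a stabilizing chain of images.
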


\begin{proof}
If $C_1$, $C_2$ are closed and irreducible, then by Corollary \ref{olennainen}, $C_1 \subsetneq C_2$ implies $\textrm{exp}(C_1) \subsetneq \textrm{exp}(C_2)$ (indeed, if $\textrm{exp}(C_1)=\textrm{exp}(C_2)=W$, then by Corollary \ref{olennainen}, both $C_1$ and $C_2$ are irreducible components of $\textrm{log }W$, so we cannot have $C_1 \subsetneq C_2$).
The claim then follows from Notherianity of the Zariski topology. 
\end{proof}
 
 We now define the dimension for irreducible closed sets exactly the same way that it can be defined in the Zariski topology.
 
\begin{definition}
If $C$ is an irreducible, closed and nonempty set on the cover, we define the \emph{dimension} of $C$ inductively as follows: 
\begin{itemize}
\item $\textrm{dim}(C) \ge 0$,
\item $\textrm{dim}(C)=\textrm{sup }\{\textrm{dim}(F)+1 \, | \, F \subsetneq C, \textrm{$F$ closed, irreducible and nonempty } \}$.
\end{itemize}
\end{definition}
 
\begin{definition}
Let $a \in V^n$, $A \subset V$.
By the \emph{locus} of $a$ over $A$, we mean the smallest PQF-closed subset definable over $A$ containing $a$.
When not specified, we assume the set $A$ to be empty. 

We define $\textrm{rk}(a/A)=\textrm{dim}(C)$, where $C$ is the locus of $a$ over $A$.
We write $\textrm{rk}(a)$ for $\textrm{rk}(a/\emptyset)$.
\end{definition}

We now prove that the dimension of an irreducible PQF-closed set equals the dimension of its image under the map exp (where the latter is calculated similarly as the former, using the Zariski closed sets on the field).

\begin{lemma}\label{dimensio}
Let $C \subset V^n$ be an irreducible PQF-closed subset of the cover.
Then,
\begin{displaymath}
\textrm{dim}(C)=\textrm{dim}(\textrm{exp}(C)).
\end{displaymath} 
\end{lemma}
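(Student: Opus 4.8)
The plan is to prove both inequalities, $\textrm{dim}(C) \le \textrm{dim}(\textrm{exp}(C))$ and $\textrm{dim}(C) \ge \textrm{dim}(\textrm{exp}(C))$, by induction on dimension, using Corollary \ref{olennainen} to transfer between proper irreducible subsets on the cover and on the field. The key structural fact I would invoke is that the map $C \mapsto \textrm{exp}(C)$ is inclusion-preserving and, restricted to irreducible closed sets, \emph{strictly} inclusion-preserving (this was already established in the proof of Lemma \ref{infchains}): if $C_1 \subsetneq C_2$ are irreducible closed on the cover, then $\textrm{exp}(C_1) \subsetneq \textrm{exp}(C_2)$, because equality of images would force both to be irreducible components of $\textrm{log}(\textrm{exp}(C_2))$ by Corollary \ref{olennainen}, contradicting $C_1 \subsetneq C_2$.

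For the inequality $\textrm{dim}(C) \le \textrm{dim}(\textrm{exp}(C))$: take any strictly descending chain $C = C_0 \supsetneq C_1 \supsetneq \cdots \supsetneq C_r$ of nonempty irreducible closed subsets of the cover realizing (a lower bound for) $\textrm{dim}(C)$. By the strictness fact above, applying $\textrm{exp}$ yields a strictly descending chain $\textrm{exp}(C_0) \supsetneq \textrm{exp}(C_1) \supsetneq \cdots \supsetneq \textrm{exp}(C_r)$ of irreducible Zariski closed sets (each $\textrm{exp}(C_i)$ is an irreducible variety by the Corollary following Lemma \ref{irredform}). Hence $\textrm{dim}(\textrm{exp}(C)) \ge r$, and taking the supremum over all such chains gives the inequality. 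Concretely I would phrase this as an induction: if $F \subsetneq C$ is closed irreducible nonempty, then $\textrm{exp}(F) \subsetneq \textrm{exp}(C)$ is too, so by the induction hypothesis $\textrm{dim}(F) = \textrm{dim}(\textrm{exp}(F)) \le \textrm{dim}(\textrm{exp}(C)) - 1$, whence $\textrm{dim}(F) + 1 \le \textrm{dim}(\textrm{exp}(C))$; taking the sup over $F$ finishes it.

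For the reverse inequality $\textrm{dim}(C) \ge \textrm{dim}(\textrm{exp}(C))$, the point is to lift a descending chain of subvarieties of $\textrm{exp}(C)$ to the cover. Given $W' \subsetneq \textrm{exp}(C) =: W$ an irreducible subvariety, I want an irreducible closed $C' \subsetneq C$ with $\textrm{exp}(C') = W'$ (or at least $\textrm{dim}(\textrm{exp}(C'))$ large enough); then induction applies. The natural candidate is to take an irreducible component $C'$ of $C \cap \textrm{log}(W')$ chosen so that $\textrm{exp}(C')$ is an irreducible component of $W'$ — here Corollary \ref{olennainen} (and Theorem \ref{komponentit}, which describes the components of $\textrm{log}(W')$ and of $\textrm{log}(W)$ in the canonical $(L+k) \cap m\cdot\textrm{log}(W'^{1/m}_{(i)})$ form) is exactly what is needed to see that $C \cap \textrm{log}(W')$ has an irreducible component lying inside $C$ and mapping onto an irreducible component of $W'$, and that this component is a \emph{proper} subset of $C$ since its image is a proper subset of $W$. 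Iterating down a maximal chain of subvarieties of $W$ produces a chain of the same length in $C$, giving $\textrm{dim}(C) \ge \textrm{dim}(W)$.

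The main obstacle I anticipate is the lifting step: ensuring that a chain witnessing $\textrm{dim}(\textrm{exp}(C))$ can be lifted component-by-component to a chain of irreducible closed sets \emph{all contained in $C$}, with strict inclusions preserved downward. This is where one must use the explicit description of irreducible components from Theorem \ref{komponentit} rather than merely Corollary \ref{olennainen}: one needs that $C$, being irreducible, is itself of the form $(L+k)\cap m\cdot\textrm{log}(W'^{1/m}_{(i)})$, and that intersecting with $\textrm{log}(W')$ for a subvariety $W' \subsetneq W$ again produces sets of this canonical shape whose components are controlled. Everything else — the two inductions and the transfer of strict inclusions — is routine once the strictness fact from Lemma \ref{infchains} and Corollary \ref{olennainen} are in hand.
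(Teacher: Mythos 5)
Your proposal is correct and follows essentially the same route as the paper: the $\le$ direction is the easy transfer of chains via $\textrm{exp}$ (using the strictness fact implicit in Lemma \ref{infchains}), and the $\ge$ direction is an induction on $\textrm{dim}(\textrm{exp}(C))$ that lifts a proper irreducible subvariety $W' \subsetneq \textrm{exp}(C)$ to a proper irreducible $Y \subsetneq C$ with $\textrm{exp}(Y) = W'$. The paper simply asserts the existence of such a $Y$; the lifting concern you flag is legitimate but closes quickly without invoking the full strength of Theorem \ref{komponentit}: pick a point $w'$ generic in $W'$ over the relevant field, lift it to $v \in C$ (possible since $\textrm{exp}(C) = W \supseteq W'$), and take $Y$ to be the PQF-locus of $v$ over the parameters defining $C$ and $W'$; then $Y$ is irreducible, $Y \subseteq C \cap \textrm{log}(W')$ so $\textrm{exp}(Y) \subseteq W'$, while $\textrm{exp}(Y)$ is a Zariski closed set containing the generic point $w'$, forcing $\textrm{exp}(Y) = W'$ and hence $Y \subsetneq C$.
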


\begin{proof} 
It is easy to see that $\textrm{dim}(C) \le \textrm{dim}(\textrm{exp}(C))$.
The other inequality is proved by induction on $\textrm{dim}(\textrm{exp}(C))$.
If  $\textrm{dim}(\textrm{exp}(C))=0$, then, using Corollary \ref{olennainen}, one sees that 
 $\textrm{dim}(C)=0$.
Suppose now $\textrm{dim}(\textrm{exp}(C)) \le \textrm{dim}(C)$ whenever 
$\textrm{dim}(\textrm{exp}(C))=n$.
Assume $\textrm{dim}(\textrm{exp}(C))=n+1$.
Denote $\textrm{exp}(C)=W$.
There is some irreducible $W' \subsetneq W$ such that $\textrm{dim}(W')=n$, and thus some 
irreducible $Y \subsetneq C$ such that $\textrm{exp}(Y)=W'$.
By the inductive hypothesis, $\textrm{dim}(Y) =n.$
Hence, $\textrm{dim}(C) \ge n+1$.
\end{proof}

In a Noetherian topology, closed sets have finitely many irreducible components.
In our context, we have the following analogy.

\begin{lemma}
Any basic PQF-closed set has countably many irreducible components.
\end{lemma}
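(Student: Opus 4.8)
The plan is to reduce the statement to the structure theorem for basic PQF-closed sets, namely Lemma \ref{PQFbasicform}, and then control the irreducible components of each piece by Theorem \ref{komponentit}. First I would recall that by Lemma \ref{PQFbasicform} any basic PQF-closed set $C \subseteq V^n$ is a \emph{finite} union $C = \bigcup_{s=1}^{N} m_s \cdot (L_s \cap \mathrm{log}\, W_s)$ for linear sets $L_s$, varieties $W_s$, and natural numbers $m_s$. Since a finite union of sets each having countably many irreducible components has itself only countably many maximal irreducible subsets (an irreducible subset of the union is contained in the union of finitely many pieces, hence, being irreducible, in one of them, so every maximal irreducible subset of $C$ is a maximal irreducible subset of some single piece), it suffices to treat one piece $m \cdot (L \cap \mathrm{log}\, W)$. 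Multiplication by $m$ is a continuous map (indeed $f_m$ is in the language), so it maps irreducible sets to irreducible sets and the number of components can only go down; thus it is enough to bound the number of irreducible components of $L \cap \mathrm{log}\, W$.

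Next I would decompose $W = W_1 \cup \cdots \cup W_r$ into its (finitely many) irreducible components, so $L \cap \mathrm{log}\, W = \bigcup_{t=1}^{r} (L \cap \mathrm{log}\, W_t)$, and again it suffices to handle $L \cap \mathrm{log}\, W$ with $W$ irreducible. Writing $W = W' \cap T_W$ as in Lemma \ref{leikkaus} and applying Theorem \ref{komponentit}, the irreducible components of $\mathrm{log}\, W$ are exactly the sets $(L_W + k) \cap m \cdot \mathrm{log}\, W'^{1/m}_{(i)}$ for $k \in K^n$, where $L_W$ is linear with $\mathrm{exp}(L_W) = T_W$, $m$ is the level at which $W'$ stops branching, and $i$ ranges over the finitely many $m$-th roots of $W'$. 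Since $K \cong \mathbb{Z}$, the index set $K^n \cong \mathbb{Z}^n$ is countable, and the number of roots $W'^{1/m}_{(i)}$ is finite, so $\mathrm{log}\, W$ has only countably many irreducible components. Intersecting with the further linear set $L$: an irreducible subset of $L \cap \mathrm{log}\, W$ is an irreducible subset of $\mathrm{log}\, W$, hence contained in one of its (countably many) components $D$; so every maximal irreducible subset of $L \cap \mathrm{log}\, W$ is a maximal irreducible subset of some $L \cap D$. Each $L \cap D$ is again a PQF-closed set of the canonical shape $(L \cap L_W + k) \cap m \cdot \mathrm{log}\, W'^{1/m}_{(i)}$, i.e. linear-set-intersect-$m\cdot\mathrm{log}$-of-an-irreducible-variety; running the same analysis on it (its irreducible components are indexed by $K^n$ times the finitely many roots) shows it too has countably many irreducible components. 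Taking the countable union over the countably many $D$ gives the result.

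The main obstacle — the one point that needs genuine care rather than bookkeeping — is justifying that an irreducible subset of a finite or countable union lies inside a single member, and more delicately that the irreducible \emph{components} (maximal irreducible subsets) of $L \cap D$ really are among those of $D$, so that no new components are created by the extra intersection. For the finite-union step this is the standard argument (irreducibility forbids splitting across two pieces); for the passage from $\mathrm{log}\, W$ to $L \cap \mathrm{log}\, W$ one uses that $L \cap \mathrm{log}\, W$ is itself PQF-closed and invokes Lemma \ref{linearirre} exactly as in the maximality part of the proof of Theorem \ref{komponentit}, to see that intersecting with $L$ either leaves a component unchanged or splits it, never merging or enlarging. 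Everything else is finiteness of the algebraic data (irreducible components and $m$-th roots of a variety) plus countability of $K^n \cong \mathbb{Z}^n$.
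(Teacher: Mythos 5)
Your reduction to the case of $L\cap\textrm{log}\,W$ with $W$ an irreducible variety is sound, and the steps leading there (Lemma \ref{PQFbasicform}, the finite-union argument, pulling back along the homeomorphism $f_{1/m}$, and using Lemma \ref{linearirre} to place an irreducible subset of $\textrm{log}\,W$ inside a single component of $\textrm{log}\,W$) are all correct. This is a somewhat different entry point from the paper, which works directly with $\exp(D)$ for the original basic set $D$ rather than first invoking the canonical decomposition of Lemma \ref{PQFbasicform}, but both routes lead to the same core difficulty.

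The gap is at the very end, in the sentence claiming that $L\cap D$ has irreducible components ``indexed by $K^n$ times the finitely many roots.'' Theorem \ref{komponentit} describes the irreducible components of $\textrm{log}\,W$ for an irreducible \emph{variety} $W$; it says nothing about a set of the shape $L'\cap m\cdot\textrm{log}\,W''$ once the extra linear constraint $L'$ has been imposed. Such a set need not be irreducible, and its components are not read off from the theorem — intersecting $D$ with $L$ can carve out a genuinely smaller PQF-closed set whose own component structure has to be analysed afresh. Saying ``run the same analysis'' conceals an implicit recursion with no visible termination: each pass replaces $\textrm{log}\,W$ by some $L'\cap m\cdot\textrm{log}\,W''$, and there is no reason given that this process does not continue indefinitely.

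The missing idea is precisely the dimension descent that the paper's proof makes explicit. For a component $C_i$ of $\textrm{log}\,W'$ (with $W'$ an irreducible component of $\exp(D)$), one considers $C_i\cap D$: either $C_i\cap D=C_i$, in which case $C_i$ is an irreducible component of $D$; or else $\exp(C_i\cap D)\subsetneq W'$ is a proper subvariety, hence of strictly smaller dimension, and one repeats the argument with $C_i\cap D$ in place of $D$. Since $\textrm{dim}(\exp(\,\cdot\,))$ is a nonnegative integer that drops at every step, the recursion terminates after finitely many rounds, and at each stage one only multiplies by a finite number of irreducible components of a variety and the countably many components of a single $\textrm{log}\,X$. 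Adding this well-founded induction on $\textrm{dim}(\exp(D))$ would close the gap in your argument; without it, the countability claim for $L\cap D$ is asserted rather than proved.
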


\begin{proof}
Let $D$ be basic a PQF-closed set, and let $W=\textrm{exp}(D)$.
For each irreducible component $W'$ of $W$, let $C_i$, $i<\omega$, be the irreducible components of $\textrm{log }W'$ (by Theorem \ref{komponentit}, there are countably many of these).
For each $i$, consider $C_i \cap D$.
If $C_i \cap D=C_i$, then $C_i$ is an irreducible component of $D$.
If not, then $W_1=\textrm{exp}(C_i \cap D) \subsetneq W'$, and hence $\textrm{dim}(W_1) < \textrm{dim}(W')$.
Repeat now the process with $C_i \cap D$ in place of $D$, looking at the irreducible components of $W_1$.
Since the dimension drops at every step, the process eventually terminates.

This way, we obtain all the irreducible components of $D$.
At each step, we consider the finitely many irreducible components of some variety $X$ and the countably many irreducible components of $\textrm{log }X$.
Thus, $D$ has only countably many irreducible components.
 \end{proof}

\begin{remark}\label{ffff}
Let $D$ be a PQF-closed set, and let $C$ be an irreducible component of $D$.
Then, $\textrm{exp}(C) \subseteq \textrm{exp}(D)$, so 
$$\textrm{dim}(C)=\textrm{dim}(\textrm{exp}(C)) \le \textrm{dim}(\textrm{exp}(D)).$$ 
\end{remark}

\begin{definition}\label{dimgendef}
For an arbitrary PQF-closed set $C$, we define $\textrm{dim}(C)$ to be the maximum dimension of the irreducible components of $C$ (by Remark \ref{ffff}, this is a finite number). 

For any PQF-closed set $C$, we say that an element $a \in C$ is \emph{generic} if $\textrm{rk}(a/ \textrm{log }F_0)=\textrm{dim}(C)$, where $F_0$ is the smallest algebraically closed subfield of $F$ such that $C$ is definable over $\textrm{log}(F_0)$.
\end{definition}
 
It is now easy to see that for an arbitrary PQF-closed set $D$, it holds that $\textrm{dim}(D)=\textrm{dim}(\textrm{exp}(D))$.

Also, the  Dimension Theorem (i.e. axiom (Z3) of Zariski geometries) holds on the cover.
To see that this is the case, we need the following lemma.

\begin{lemma}\label{dimthmlemma}
Let $C_1, C_2 \subseteq V^n$ be closed and irreducible, and let $X$ be an irreducible component of $C_1 \cap C_2$.
Then, $\textrm{exp}(X)$ is an irreducible component of $\textrm{exp}(C_1) \cap \textrm{exp}(C_2)$.
\end{lemma}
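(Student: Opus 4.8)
Since closed irreducible subsets of the cover map under $\textrm{exp}$ to irreducible varieties (the corollary to Lemma \ref{irredform}), the sets $\textrm{exp}(C_1)$, $\textrm{exp}(C_2)$ and $\textrm{exp}(X)$ are irreducible, and $\textrm{exp}(X) \subseteq \textrm{exp}(C_1) \cap \textrm{exp}(C_2)$ because $X \subseteq C_1 \cap C_2$. So it suffices to show that $\textrm{exp}(X)$ is a \emph{maximal} irreducible subset of $\textrm{exp}(C_1) \cap \textrm{exp}(C_2)$. Assume this fails; then there is an irreducible variety $W$ with $\textrm{exp}(X) \subsetneq W \subseteq \textrm{exp}(C_1) \cap \textrm{exp}(C_2)$, and by Lemma \ref{dimensio} we have $\textrm{dim}(W) > \textrm{dim}(\textrm{exp}(X)) = \textrm{dim}(X)$. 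The plan is to construct a closed irreducible set lying strictly between $X$ and $C_1 \cap C_2$, which contradicts that $X$ is an irreducible component of $C_1 \cap C_2$.

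Fix a countable submodel $G$ with $G = \textrm{log}(\textrm{exp}(G))$, $\textrm{exp}(G)$ algebraically closed, over which $C_1$, $C_2$ and $W$ are all defined, and pick $a$ generic in $X$ over $G$, i.e.\ $\textrm{rk}(a/G) = \textrm{dim}(X)$. Then $\textrm{exp}(a)$ is generic in $\textrm{exp}(X)$ over $\textrm{exp}(G)$: if its locus over $\textrm{exp}(G)$ were a proper subvariety $W_\ast \subsetneq \textrm{exp}(X)$, then $a$ would lie in the $G$-closed proper subset $X \cap \textrm{log}(W_\ast)$ of $X$, contradicting genericity. In particular $\textrm{exp}(a) \in W$.

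The crux is a lifting step. In the field, choose a point $w$ generic in $W$ over $\textrm{exp}(G)(\textrm{exp}(a))$, so that $\textrm{exp}(a)$ is a Zariski specialization of $w$ over $\textrm{exp}(G)$. I claim $w$ can be lifted to a point $b \in V^n$ with $\textrm{exp}(b) = w$, with $b \in C_1 \cap C_2$, and with $a$ a specialization of $b$ over $G$ (equivalently, the positive quantifier-free type of $b$ over $G$ is contained in that of $a$, so that $\textrm{loc}(a/G) \subseteq \textrm{loc}(b/G)$). To produce such a $b$ one describes the relevant quantifier-free types by means of Lemma \ref{tyypit} and uses the decomposition $W = W' \cap T$ of Lemma \ref{leikkaus}: the part of the type coming from the roots of the torus $T$ is controlled by forcing $b$ into an appropriate linear set (Lemmas \ref{singlemth} and \ref{linearirre}), the part coming from $W'$ is determined by finitely much information by Theorem \ref{ZilberTheorem}, and since $\textrm{exp}(X) \subseteq W$ lies in both $\textrm{exp}(C_1)$ and $\textrm{exp}(C_2)$ and $C_1$, $C_2$ have the canonical shape of Theorem \ref{komponentit}, the linear and root constraints that cut out $C_1$ and $C_2$ are consistent with taking $b$ a generic lift of $w$. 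Working in the monster model, $\textrm{loc}(b/G)$ is irreducible.

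Granting the lifting, set $Y = \textrm{loc}(b/G)$. It is closed and irreducible; it is contained in $C_1 \cap C_2$ since $b \in C_1 \cap C_2$ and $C_1$, $C_2$ are closed and defined over $G$; and it contains $X = \textrm{loc}(a/G)$ since $a$ is a specialization of $b$. Moreover $\textrm{exp}(Y)$ contains the locus of $w = \textrm{exp}(b)$ over $\textrm{exp}(G)$, namely $W$, so by Lemma \ref{dimensio}, $\textrm{dim}(Y) = \textrm{dim}(\textrm{exp}(Y)) \ge \textrm{dim}(W) > \textrm{dim}(X)$. Hence $X \subsetneq Y \subseteq C_1 \cap C_2$ with $Y$ closed and irreducible, contradicting the maximality of the irreducible component $X$; the lemma follows. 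The one genuine difficulty is the lifting step — transporting a Zariski specialization on the field to a specialization on the cover that still lies inside $C_1 \cap C_2$ — and this is exactly where the structure theory of PQF-closed sets from Section 2 (the canonical forms, the contrasting behaviour of roots of tori and of non-toral varieties, and Lemma \ref{tyypit}) is used; the rest is routine bookkeeping with loci and dimensions.
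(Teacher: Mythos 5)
Your overall strategy is sound and in fact closely parallels the paper's underlying idea: both ultimately reduce the problem to matching roots and linear constraints using the canonical decomposition of PQF-closed sets. The reduction you set up is correct as far as it goes — $\textrm{loc}(a/G) = X$, $Y = \textrm{loc}(b/G)$ would be irreducible and satisfy $X \subsetneq Y \subseteq C_1 \cap C_2$, and the dimension count via Lemma \ref{dimensio} and Remark \ref{ranksama} is right. The difficulty is entirely concentrated in the lifting step, and there you have a genuine gap.

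The issue is that your lifting claim is not a lemma one can cite; it is logically equivalent to the statement being proved. If $\exp(X)$ is already maximal in $\exp(C_1) \cap \exp(C_2)$, the lifting is vacuously true; if it is not, the lifting (as you observe) immediately yields a contradiction with the maximality of $X$. So asserting the lifting \emph{is} asserting the lemma, and a sketch that says the ``linear and root constraints are consistent'' does not supply an independent proof. Concretely, several nontrivial verifications are being skipped. (i) You need $w \in \exp(L_1 \cap L_2)$ before you can even choose a lift in $L_1 \cap L_2$; since $\exp(L_1 \cap L_2) \subseteq \exp(L_1) \cap \exp(L_2)$ can be strict, this requires showing that $L_1 \cap L_2$ is an irreducible component of $\log(\exp(L_1) \cap \exp(L_2))$ (via Lemma \ref{linearirre} and Corollary \ref{olennainen}) and that $W$ lies in the corresponding component. (ii) After lifting into $L_1 \cap L_2$, you must land $\exp(b/m_1)$ in $W_1$ and $\exp(b/m_2)$ in $W_2$ simultaneously, and your only freedom is translating $b$ by elements of $K^n$ that preserve both $L_1$ and $L_2$; whether that freedom suffices hinges on exactly how the $m_1m_2$-th roots of $W$ decompose into torus and non-torus parts (Lemma \ref{rootsintersect}), and this interaction is precisely the delicate bookkeeping the paper's proof carries out explicitly. (iii) You must also arrange that the \emph{entire} tower of roots of $\exp(b)$ specializes to that of $\exp(a)$, not merely at levels $m_1$ and $m_2$; the torus part is handled by Lemma \ref{singlemth} once $b \in L_1 \cap L_2$, and the non-torus part by Theorem \ref{ZilberTheorem}, but these have to be stitched together through the decomposition of Lemma \ref{leikkaus}. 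Carrying out (i)--(iii) is essentially what the paper does, just organized as a direct computation of the components of $C_1 \cap C_2$ versus the components of $\exp(C_1) \cap \exp(C_2)$ rather than as a lifting argument. As written, the hard part of your proof is precisely what remains unproved.
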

 
\begin{proof}
For $i=1,2$, denote
$C_i=L_i \cap m_i \textrm{log } W_i$, where $L_i$ is linear, $W_i$ does not branch.
By Corollary \ref{olennainen} and Theorem \ref{komponentit}, we may assume that $T_i=\textrm{exp}(L_i)$ is the minimal torus containing $\textrm{exp}(C_i)=T_i \cap W_i^{m_i}$ and that $W_i^{m_i}$ is chosen as in the proof of Lemma \ref{leikkaus}.
Now,
$$\textrm{exp} (C_1) \cap \textrm{exp}(C_2)=T_1 \cap T_2 \cap W_1^{m_1} \cap W_2^{m_2}.$$
 
Let $W_1^{m_1} \cap W_2^{m_2}=T_3 \cap W$ where $T_3$ is a torus and $W$ is a variety not contained in any torus, chosen as in the proof of Lemma \ref{leikkaus}.
Set $T=T_1 \cap T_2 \cap T_3$ so that
 $$T_1 \cap T_2 \cap W_1^{m_1} \cap W_2^{m_2}=T \cap W.$$
 Let $X_1, \ldots, X_r$ be the irreducible components of $T\cap W$.
 For each $i$, we may write $X_i= T \cap T_i' \cap Y_i$, where $T_i'$ is the minimal torus containing $X_i$ and $Y_i$ is not contained in any torus, again chosen as in the proof of Lemma \ref{leikkaus}.
 Let $L_3$ be a linear set so that $\textrm{exp}(L_3)=T_3$ and $L_1 \cap L_2 \cap  L_3 \neq \emptyset$.  
 By Theorem \ref{komponentit}, the irreducible components of $\textrm{log}(T\cap W)$ are the sets
 $$((L \cap L_i')+k) \cap m_i' \textrm{log } {Y_i^{\frac{1}{m_i'}}}_{(j)},$$
 where $L=L_1 \cap L_2 \cap L_3$, and $L_i'$ is a linear set  such that $\textrm{exp}(L_i')=T_i'$ and $L_i' \cap L \neq \emptyset$, and $m_i'$ is such that the $m_i'$:th roots of $Y_i$ no longer branch.
 
Let $W_1^{\frac{1}{m_2}}$ and $W_2^{\frac{1}{m_1}}$ be the unique $m_2$:th and $m_1$:th roots of $W_1$ and $W_2$, respectively.
Then,
\begin{eqnarray*}
C_1 \cap C_2 &=& L_1 \cap L_2 \cap m_1 m_2 \textrm{log}(W_1^{\frac{1}{m_2}} \cap W_2^{\frac{1}{m_1}}),
\end{eqnarray*}
where
$$W_1^{\frac{1}{m_2}} \cap W_2^{\frac{1}{m_1}}=(W_1^{m_1} \cap W_2^{m_2})^{\frac{1}{m_1 m_2}}=(T_3 \cap W)^{\frac{1}{m_1 m_2}}$$
for a suitable choice of the $m_1 m_2$:th root.
Thus, for suitable choices of the $m_1 m_2$:th roots,
\begin{eqnarray*}
C_1 \cap C_2 &=& L_1 \cap L_2 \cap m_1 m_2 \textrm{log}((T_3 \cap W)^{\frac{1}{m_1 m_2}}) \\
&=& L_1 \cap L_2 \cap m_1 m_2 \textrm{log}((T_1 \cap T_2 \cap T_3 \cap W)^{\frac{1}{m_1 m_2}}) \\
&=&L_1 \cap L_2 \cap m_1 m_2 \textrm{log}(T^{\frac{1}{m_1 m_2} } \cap \bigcup_{i=1}^r (T_i'^{\frac{1}{m_1 m_2}}\cap Y_i^{\frac{1}{m_1 m_2}})).
\end{eqnarray*}
Hence, by Theorem \ref{komponentit}, irreducible components of $C_1 \cap C_2$ are of the form
$$((L_1 \cap L_2 \cap L_3 \cap L_i') +k) \cap m_i' \textrm{log } {Y_i ^{\frac{1}{m_i'}}}_{(j)},$$
and thus each one of them is an irreducible component of $\textrm{log}(T\cap W)$.
(Note that for each $i$, we have $T_i' \cap Y_i \subseteq T \cap T_i' \cap Y_i$.)
By Corollary  \ref{olennainen}, $\textrm{exp}(X)$ is an irreducible component of $T\cap W=\textrm{exp}(C_1 \cap C_2)$, as wanted.
\end{proof} 
 
\begin{theorem}[Dimension Theorem]\label{dimthm}
Let $C_1, C_2 \subset V^n$ be closed and irreducible.
Let $X$ be a non-empty irreducible component of $C_1 \cap C_2$.
Then, $\textrm{dim}(X) \ge \textrm{dim}(C_1)+\textrm{dim}(C_2)-n$.
\end{theorem}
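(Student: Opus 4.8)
The plan is to transfer the statement to the field via the map $\textrm{exp}$ and then invoke the classical affine Dimension Theorem. First I would note that since $C_1$ and $C_2$ are closed and irreducible, the corollary following Lemma \ref{irredform} gives that $\textrm{exp}(C_1)$ and $\textrm{exp}(C_2)$ are irreducible varieties in $F^n$. Moreover, Lemma \ref{dimthmlemma} tells us that $\textrm{exp}(X)$ is an irreducible component of $\textrm{exp}(C_1) \cap \textrm{exp}(C_2)$; in particular $\textrm{exp}(X)$ is a nonempty irreducible variety.

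Next I would apply the classical Dimension Theorem for affine varieties over the algebraically closed field $F$: if $W_1, W_2 \subseteq F^n$ are irreducible and $Z$ is any irreducible component of $W_1 \cap W_2$, then $\textrm{dim}(Z) \ge \textrm{dim}(W_1) + \textrm{dim}(W_2) - n$. Applying this with $W_i = \textrm{exp}(C_i)$ and $Z = \textrm{exp}(X)$ yields
$$\textrm{dim}(\textrm{exp}(X)) \ge \textrm{dim}(\textrm{exp}(C_1)) + \textrm{dim}(\textrm{exp}(C_2)) - n.$$
Finally, Lemma \ref{dimensio} identifies $\textrm{dim}(C) = \textrm{dim}(\textrm{exp}(C))$ for every irreducible PQF-closed set $C$, and in particular for $C_1$, $C_2$ and $X$. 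Substituting these equalities into the displayed inequality gives $\textrm{dim}(X) \ge \textrm{dim}(C_1) + \textrm{dim}(C_2) - n$, which is the assertion.

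As for obstacles, essentially all the real work has already been carried out: Lemma \ref{dimthmlemma} supplies the crucial fact that $\textrm{exp}$ carries an irreducible component of an intersection of irreducible closed sets to an irreducible component of the intersection of their images, and Lemma \ref{dimensio} supplies the dimension comparison between a PQF-closed set and its image. The only remaining ingredient is the appeal to the classical affine Dimension Theorem, which is entirely standard. The single point worth flagging is that $\textrm{exp}(C_1) \cap \textrm{exp}(C_2)$ need not be irreducible, but this causes no difficulty, since we only need the lower bound for the specific irreducible component $\textrm{exp}(X)$, and that is precisely the form in which the classical theorem applies.
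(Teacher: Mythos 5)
Your proposal is correct and follows essentially the same approach as the paper: invoke Lemma \ref{dimthmlemma} to identify $\textrm{exp}(X)$ as an irreducible component of $\textrm{exp}(C_1) \cap \textrm{exp}(C_2)$, apply the classical affine Dimension Theorem on the field, and then transfer back via Lemma \ref{dimensio}. Your version merely spells out the intermediate steps (irreducibility of the images, the precise field inequality, the final substitution) that the paper's terse proof leaves implicit.
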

\begin{proof}
By Lemma \ref{dimthmlemma}, $\textrm{exp}(X)$ is an irreducible component of $\textrm{exp}(C_1) \cap \textrm{exp}(C_2)$. 
Since the theorem holds on the field, the theorem follows from Lemma \ref{dimensio}.
\end{proof}
   
\section{The cover as a quasiminimal structure}

We wish to apply model theoretic methods developed in \cite{lisuriart}, section 2 (see also \cite{lisuri}, Chapter 2) to the cover.
For this, we need to obtain an abstract elementary class (AEC) satisfying certain properties.
Quasiminimal classes, studied in \cite{monet} and \cite{kir}, provide one such context.
Thus, in this section, we take a look at the cover as a quasiminimal structure.
Also, at the end, we show that the topological dimension defined in the previous section coincides with the dimension obtained from the following pregeometry (and in particular, the notion of generic elements given in Definition \ref{dimgendef} coincides with the usual notion of ``generic" in stability theory).
This will be used in the arguments of section 5. 

\begin{definition}
Define a closure operator, cl, on $\mathcal{P}(V)$ so that for any $A \subset V$, 
$$\textrm{cl}(A)=\textrm{log}(\textrm{acl}(\textrm{exp}(A))).$$
\end{definition} 

It is easy to see that $\mathcal{P}(V)$ forms a pregeometry with respected to cl.
After adding constants for the elements of $\textrm{log}(\overline{\mathbb{Q}})$ to our language (here $\overline{\mathbb{Q}}$ stands for the field of algebraic numbers), it can be proved (\cite{Zilber}, section 3) that $V$ has become  a quasiminimal pregeometry structure in the sense of \cite{monet}, defined below (here, tp denotes quantifier free type).

\begin{definition}\label{quasiminclass}
Let $M$ be an $L$-structure for a countable language $L$, equipped with a pregeometry $cl$.
We say that $M$ is a \emph{quasiminimal pregeometry structure} if the following hold:
\begin{enumerate}
\item (QM1) The pregeometry  is determined by the language.
That is, if $a$ and $a'$ are singletons and
$\textrm{tp}(a, b)=\textrm{tp}(a', b')$, then $a \in \textrm{cl}(b)$ if and only if $a' \in \textrm{cl}(b')$.
\item (QM2) $M$ is infinite-dimensional with respect to cl.
\item (QM3) (Countable closure property) If $A \subseteq M$ is finite, then $\textrm{cl}(A)$ is countable.
\item (QM4) (Uniqueness of the generic type) Suppose that $H,H' \subseteq M$ are countable closed subsets, enumerated so that $\textrm{tp}(H)=\textrm{tp}(H')$.
If $a \in M \setminus H$ and $a' \in M \setminus H'$ are singletons, then $\textrm{tp}(H,a)=\textrm{tp}(H',a')$ (with respect to the same enumerations for $H$ and $H'$).
\item (QM5) ($\aleph_0$-homogeneity over closed sets and the empty set)
Let $H,H' \subseteq M$ be countable closed subsets or empty, enumerated so that $\textrm{tp}(H)=\textrm{tp}(H')$,
and let $b, b'$ be finite tuples from $M$ such that $\textrm{tp}(H, b)=\textrm{tp}(H',b')$, and let $a$ be a singleton such that $a \in \textrm{cl}(H, b)$.
Then there is some singleton $a' \in M$ such that $\textrm{tp}(H, b,a)=\textrm{tp}(H',b',a').$
\end{enumerate}
\end{definition}

From now on, we will assume that we have added the elements of $\textrm{log}(\overline{\mathbb{Q}})$ to our language. 
We can construct an AEC $\K(V)$ from some  model $V$ of the theory of the covers (see \cite{monet}, section 2, and the end of section 2 in \cite{lisuriart}).
We may assume $V$ is sufficiently large (in particular, uncountable), and thus, in terms of model theory, a monster model for the class.
Using techniques applicable for AECs we obtain for $\mathcal{K}(V)$ an independence calculus that satisfies all the usual properties of non-forking (\cite{lisuriart}, section 2, see also \cite{lisuri}, chapter 2).
For quasiminimal pregeometry structures, it then turns out that the independence notion coincides with the one given by the pregeometry.
In the case of the cover, we will get the same notion also from the dimension in the PQF-topology.
We write $a \da_b c$ for ``$a$ is independent from $c$ over $b$".

We will be using the concepts of Galois type and bounded closure.
In \cite{lisuriart}, section 2 (see also \cite{lisuri}, section 2), we showed that these concepts are meaningful in our context.

\begin{definition}
We define \emph{Galois types} as orbits of automorphisms of $V$.
For $A \subset V$, we write $t^g(b/A)=t^g(c/A)$ if the tuples $b$ and $c$ have the same Galois type over $A$, i.e. if there is some automorphism $f$ of $V$ such that $f(a)=a$ for every $a \in A$ and $f(b)=c$.
\end{definition}

\begin{remark}\label{le31}
By (e.g.) Lemma 3.1 in \cite{monet}, the quantifier-free first order types imply Galois types over closed sets and finite sets.
\end{remark}

\begin{definition}
We say a set $X \subset V^n$ is \emph{Galois definable} over $A \subset V$ for every automorphism $f$ of $V$ such that $f(a)=a$ for every $a \in A$, it holds that $f(X)=X$.
\end{definition}

\begin{definition}
We say that a set $A$ is \emph{bounded} if $\vert A \vert <  \vert V \vert$.
\end{definition}

\begin{definition}
We say $a$ is in the \emph{bounded closure} of $A$, denoted $a \in \textrm{bcl}(A)$, if $t^g(a/A)$ has only boundedly many realizations.
\end{definition}

\begin{lemma}\label{bcl} 
For any $A \subset V$, 
$$\textrm{bcl}(A)=\textrm{cl}(A).$$
\end{lemma}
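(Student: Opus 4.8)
The plan is to prove the two inclusions $\textrm{cl}(A) \subseteq \textrm{bcl}(A)$ and $\textrm{bcl}(A) \subseteq \textrm{cl}(A)$ separately, using on the one hand the countable closure property (QM3) and on the other the quasiminimality axioms (QM1), (QM4), together with Remark \ref{le31} which tells us that quantifier-free types imply Galois types over closed sets and finite sets. First I would reduce to the case where $A$ is closed: since $\textrm{cl}$ is a closure operator we have $\textrm{cl}(A)=\textrm{cl}(\textrm{cl}(A))$, and an easy argument shows $\textrm{bcl}(A)=\textrm{bcl}(\textrm{cl}(A))$ as well, because $\textrm{cl}(A)$ is the $\textrm{log}$ of an algebraically closed field containing $\textrm{exp}(A)$ and hence contained in the algebraic closure of any monster-sized set over which the relevant Galois type is defined — more simply, $A\subseteq \textrm{cl}(A)$ gives $\textrm{bcl}(A)\subseteq\textrm{bcl}(\textrm{cl}(A))$, and for the reverse one notes each element of $\textrm{cl}(A)$ has boundedly many conjugates over $A$, so adding them does not change boundedness of orbits. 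It then suffices to prove $\textrm{bcl}(H)=\textrm{cl}(H)$ for $H$ a closed (hence, by (QM3), at most countable when finitely generated — but in general possibly larger) set; in fact the cleanest route is to work with finite $A$ and use that $\textrm{bcl}$ and $\textrm{cl}$ both commute with directed unions.

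For the inclusion $\textrm{cl}(A)\subseteq\textrm{bcl}(A)$: if $a\in\textrm{cl}(A)=\textrm{log}(\textrm{acl}(\textrm{exp}(A)))$, then $\textrm{exp}(a)$ is algebraic over $\textrm{exp}(A)$, so it has only finitely many field-conjugates over the field generated by $\textrm{exp}(A)$; lifting this through $\textrm{exp}$, the fibre of $\textrm{exp}$ over each conjugate is a coset of $K\cong\mathbb{Z}$, hence countable. Any automorphism of $V$ fixing $A$ permutes these conjugates and acts on each fibre, so the orbit of $a$ has size at most $\aleph_0\cdot|\textrm{acl}(\textrm{exp}(A))\cap(\text{conjugates})|$, which is countable, hence bounded. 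Therefore $a\in\textrm{bcl}(A)$.

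For the harder inclusion $\textrm{bcl}(A)\subseteq\textrm{cl}(A)$: suppose $a\notin\textrm{cl}(A)$; I must show the Galois type $t^g(a/A)$ has unboundedly many realizations. Reducing as above, assume $A=H$ is closed. Since $a\notin\textrm{cl}(H)$, by (QM1) (the pregeometry is determined by the language) $a$ is a "generic" singleton over $H$ in the sense of the quasiminimal axioms, and by (QM4) (uniqueness of the generic type) any two singletons not in $\textrm{cl}(H)$ have the same quantifier-free type over $H$; by Remark \ref{le31}, they then have the same Galois type over $H$. But $V$ is infinite-dimensional over $\textrm{cl}$ by (QM2), and in the monster $V$ one can find unboundedly many singletons pairwise independent over $H$ (each outside $\textrm{cl}(H)$) — concretely, pick a $\textrm{cl}$-independent set of size $|V|$ avoiding $\textrm{cl}(H)$, which exists because $V$ has dimension $|V|$. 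All of these realize $t^g(a/H)$, so the orbit is unbounded, i.e. $a\notin\textrm{bcl}(H)$. The main obstacle is the bookkeeping in the reduction to closed $A$ and making sure "unboundedly many $\textrm{cl}$-independent singletons outside $\textrm{cl}(H)$" is justified from the monster-model setup and (QM2)–(QM4); once Remark \ref{le31} is in hand to pass from quantifier-free types to Galois types, the rest is a standard quasiminimality argument and contributes the bulk of the work only in verifying these homogeneity inputs apply over the (possibly uncountable) closed set $H$, for which one falls back on the finite-generation/directed-union reduction.
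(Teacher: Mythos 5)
Your proposal is correct and follows essentially the same route as the paper's proof: the inclusion $\textrm{cl}(A)\subseteq\textrm{bcl}(A)$ is a direct calculation with countable fibres of $\textrm{exp}$ over finitely many algebraic conjugates (which the paper dispatches with ``clearly''), and the inclusion $\textrm{bcl}(A)\subseteq\textrm{cl}(A)$ comes from (QM4) together with Remark~\ref{le31} to conclude that all singletons outside $\textrm{cl}(A)$ share the same Galois type over $\textrm{cl}(A)$, of which there are unboundedly many. The paper applies (QM4) directly with $H=\textrm{cl}(A)$ rather than first reducing to the case $A$ closed, and does not belabour the countability hypothesis or the ``unboundedly many'' count, but the underlying argument is identical.
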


\begin{proof}
Clearly $\textrm{cl}(A)=\textrm{log}(\textrm{acl}(\textrm{exp}(A))) \subseteq \textrm{bcl}(A)$.
 
On the other hand, suppose $v, v' \notin \textrm{cl}(A).$
We will show that $v$ and $v'$ have the same Galois type over $\textrm{cl}(A)$. 
By (QM4) and Remark \ref{le31},  $t^g(v/\textrm{cl}(A)))=t^g(v'/\textrm{cl}(A))$.
Since there are uncountably many such $v'$, we have $v \notin \textrm{bcl}(A)$.
\end{proof}
 
\begin{remark}\label{ranksama}
The dimension obtained from the pregeometry agrees with the one defined topologically with respect to the PQF-closed sets, i.e. for any $a \in V^n$, $A \subset V$, it holds that $$\textrm{rk}(a/A)=\textrm{dim}_{\textrm{cl}}(a/A).$$ 
To see this, let $X \subset V^n$ be the smallest $A$-definable PQF-closed set containing $a$.
Then, $\textrm{exp}(X)$ is definable by some first-order formula $\phi(x_1, \ldots, x_n)$ in the field language over $\textrm{exp}(A)$.
By Corollary \ref{variety},  the set $\textrm{exp}(X)$ is Zariski closed, and thus there is some set of polynomials $S=\{p_1, \ldots, p_m\}$ such that $\textrm{exp}(X)$ is the zero locus of $S$.

Let $a_1, \ldots, a_r \in F$ be the coefficients of the polynomials $p_1, \ldots, p_m$ that are in $F \setminus \textrm{acl}(\textrm{exp}(A))$.
Replacing these by the variables $y_1, \ldots, y_r$, we may write a formula in the field  language with parameters from $\textrm{exp}(A)$ expressing
$$\exists y_1 \cdots \exists y_r (\phi(x_1, \ldots, x_n) \leftrightarrow \bigwedge_{i=1}^m p_i(y_1, \ldots, y_r, x_1, \ldots, x_n)=0).$$
This formula holds in $F$.
By Model Completeness in algebraically closed fields, it holds already in 
$\textrm{acl}(\textrm{exp}(A))$.  
Thus there are some polynomials $p_1', \ldots, p_m'$ over $\textrm{acl}(\textrm{exp}(A))$ such that $\textrm{exp}(X)$ is the zero locus of the set consisting of these polynomials.

Hence, we have seen that $\textrm{exp}(X)$ is definable as a variety over $\textrm{acl}(\textrm{exp}(A))$, the smallest algebraically closed field containing $A$.
This implies that $\textrm{exp}(X)$ is the locus (in the field sense) of $\textrm{exp}(a)$ over $\textrm{acl}(\textrm{exp}(A))$, so
$$\textrm{dim}(X)=\textrm{dim}(\textrm{exp}(X))=\textrm{dim}_{\textrm{acl}}(\textrm{exp}(a)/\textrm{acl}(\textrm{exp}(A)))=\textrm{dim}_{cl}(a/A),$$
where the second equality follows form the fact that in algebraically closed fields, the dimension in Zariski topology, Morley rank, and the dimension with respect to the pregeometry defined by $acl$ are all equal.
 
It also follows that the definition of generic elements of a PQF-closed set given in \ref{dimgendef} coincides with the usual notion of ``generic" in stability theory.
\end{remark}

From now on, we will use $\textrm{dim}$ to denote the pregeometry dimension $\textrm{dim}_{cl}$.
We write $\textrm{dim}(a)$ for $\textrm{dim}(a/\emptyset)$. 
  
\section{The cover is Zariski-like}

In this section we show that the cover is Zariski-like in sense of \cite{lisuriart}, Definition 4.1 (see also \cite{lisuri}, Definition 4.1).
Before we can list the axioms for a Zariski-like structure, we need some auxiliary definitions.
First, we generalize the notion of specialization from \cite{HrZi} to our context.

\begin{definition}
Let $\M$ be a monster model for a quasiminimal class, $A \subset \M$, and let $\mathcal{C}$ be a collection of subsets of $\M^n$, $n=1,2,\ldots$.
We say that a function $f: A \to \M$ is a \emph{specialization} (with respect to $\mathcal{C}$) if for any $a_1, \ldots, a_n \in A$ and for any $C \in \mathcal{C}$, it holds that if $(a_1, \ldots, a_n) \in C$, then $(f(a_1), \ldots, f(a_n)) \in C$.
If $A=(a_i : i \in I)$, $B=(b_i:i\in I)$ and the indexing is clear from the context, we write $A \to B$ if the map $a_i \mapsto b_i$, $i \in I$, is a specialization.

If $a$ and $b$ are finite tuples and $a \to b$, we denote $\textrm{rk}(a \to b)=\textrm{dim}(a/\emptyset)-\textrm{dim}(b/\emptyset)$.
\end{definition}

The specializations in the context of Zariski geometries in \cite{HrZi} are specializations in the sense of our definition if we take $\mathcal{C}$ to be the collection of closed sets (Zariski geometries are quasiminimal since they are strongly minimal).

Next, we generalize the definition of regular specializations from \cite{HrZi}.  

\begin{definition}
Let $\M$ be a monster model for a quasiminimal class.
We define a \emph{strongly regular} specialization recursively as follows:
\begin{itemize}
\item Isomorphisms are strongly regular;
\item If $a \to a'$ is a specialization and $a \in \M$ is generic over $\emptyset$, then $a \to a'$ is strongly regular;
\item $aa' \to bb'$ is strongly regular if $a \downarrow_\emptyset a'$ and the specializations $a \to b$ and $a' \to b'$ are strongly regular.
\end{itemize}
\end{definition}

\begin{remark}
It follows from Assumptions 6.6 (7) in \cite{HrZi} (for a more detailed discussion on why these properties hold in a Zariski geometry, see \cite{lisuri}, Chapter 1.1.) that if a specialization on a Zariski geometry is strongly regular in the sense of our definition, then it is regular in the sense of \cite{HrZi} (definition on p. 25).
  
Also, it is easy to see that if $a \to a'$ is a strongly regular specialization on the cover, then $\textrm{exp}(a) \to \textrm{exp}(a')$ is strongly regular on the field and thus regular in the sense of \cite{HrZi}.
\end{remark}
 
The following generalizes the definition of good specializations from \cite{HrZi}.

\begin{definition}\label{good}
We define a \emph{strongly good} specialization recursively as follows.
Any strongly regular specialization is strongly good.
Let $a=(a_1, a_2, a_3)$, $a'=(a_1', a_2', a_3')$, and $a \to a'$.
Suppose:
\begin{enumerate}[(i)]
\item $(a_1, a_2) \to (a_1', a_2')$ is strongly good.
\item $a_1 \to a_1'$ is an isomorphism.
\item $a_3 \in \textrm{cl}(a_1)$.
\end{enumerate}
Then, $a \to a'$ is strongly good.
\end{definition}

A Zariski-like structure is defined by nine axioms as follows.  

\begin{definition}
We say that an infinite-dimensional quasiminimal pregeometry structure $\M$ is Zariski-like if there is a countable collection $\mathcal{C}$ of subsets of $\M^n$ ($n=1, 2, \ldots$), which we call the \emph{irreducible} sets, satisfying the following axioms (all specializations are with respect to $\mathcal{C}$).
 
\begin{enumerate}[(1)]
\item Each $C \in \mathcal{C}$ is Galois definable over $\emptyset$.
\item For each $a \in \M$, there is some $C \in \mathcal{C}$ such that $a$ is generic in $C$, i.e. of all elements in $C$, $a$ has the maximal dimension over $\emptyset$.  
\item If $C \in \mathcal{C}$, then the generic elements of $C$ have the same Galois type. 
\item If $C,D \in \mathcal{C}$, $a \in C$ generic and $a \in D$, then $C \subseteq D.$
\item If $C_1, C_2 \in \mathcal{C}$, $(a,b) \in C_1$ is generic, $a$ is a generic element of $C_2$ and $(a',b') \in C_1$, then $a' \in C_2$.
\item If $C \in \mathcal{C}$, $C \subset \M^n$, and $f$ is a coordinate permutation on $\M^n$, then $f(C) \in \mathcal{C}$.
\item Let $a \to a'$ be a strongly good specialization  and let $rk(a \to a') \le 1$.
Then any specializations $ab \to a'b'$, $ac \to a'c'$  can be amalgamated: there exists $b^{*}$, independent from $c$ over $a$ such that $\textrm{t}^g(b^{*}/a)=\textrm{t}^g(b/a)$, and $ab^{*}c \to a'b'c'$.
\item Let $(a_i:i\in I)$ be independent and indiscernible over $b$.
Suppose $(a_i':i\in I)$ is indiscernible over $b'$, and $a_ib \to a_i'b'$ for each $i \in I$.
Further suppose $(b \to b')$ is a strongly good specialization  and $rk(b \to b') \le 1$.
Then, $(ba_i:i \in I) \to (b'a_i':i\in I)$.

\item Let $\kappa$ be a (possibly finite) cardinal and let  $a_i, b_i \in \M$ with $i < \kappa$, such that $a_0 \neq a_1$ and $b_0=b_1$. 
Suppose $(a_i)_{i<\kappa} \to (b_i)_{i <\kappa}$ is a specialization.
Assume there is some unbounded and directed $S \subset \mathcal{P}_{<\omega}(\kappa)$ satisfying the following conditions:
\begin{enumerate}[(i)]
\item  $0,1 \in X$ for all $X \in S$;
\item For all $X,Y \in S$ such that $X \subseteq Y$,  and for all sequences $(c_i)_{i \in Y}$ from $V$, the following holds: 
If $c_0=c_1$, $ (a_i)_{i \in Y} \to (c_i)_{i \in Y} \to (b_i)_{i \in Y},$
and $\textrm{rk}((a_i)_{i \in Y} \to (c_i)_{i \in Y}) \le 1$, then $\textrm{rk}((a_i)_{i \in X} \to (c_i)_{i \in X}) \le 1$.
\end{enumerate}
 
Then, there are $(c_i)_{i<\kappa}$ such that   
$$(a_i)_{i \in \kappa} \to (c_i)_{i \in \kappa} \to (b_i)_{i \in \kappa},$$
$c_0=c_1$ and $\textrm{rk}((a_i)_{i \in X} \to (c_i)_{i \in X}) \le 1$ for all $X \in S$.
 \end{enumerate}
\end{definition}

\begin{remark}
Zariski geometries are Zariski-like if the collection $\mathcal{C}$ is taken to be the irreducible closed sets definable over $\emptyset$.
Indeed, strongly minimal structures are quasiminimal, and the conditions (1)-(9) are satisfied.
On a Zariski geometry, first-order types imply Galois types.
Moreover, every strongly regular specialization is regular, and every strongly good specialization is good.
Hence, (7) is Lemma 5.14 in \cite{HrZi} and (8) is Lemma 5.15 in \cite{HrZi}.
(9) holds by Compactness.
\end{remark}

As in Chapter 4, we assume that we have added elements of $\textrm{log}(\overline{\mathbb{Q}})$ to our language and that we are working in a monster model $V$ for the theory of covers.
Sets definable by positive, quatifier-free formulae over $\emptyset$ will be called
\emph{PQF-closed over $\emptyset$}.  
We will show that $V$ is Zariski-like if we take $\mathcal{C}$ to consist of the irreducible PQF-closed  sets definable over $\emptyset$.
Since we have added the elements of $\textrm{log}(\overline{\mathbb{Q}})$ as constants in our language,  this means in practice that we are allowed to use parameters from $\textrm{cl}(\emptyset)$.  

In Chapter 4, we saw that $V$ is a quasiminimal pregeometry structure.
Definable sets are Galois definable, so (1) holds.
For (2), if $W$ is the locus of $\textrm{exp}(v)$ over $\overline{\mathbb{Q}}$, take $C$ to be the irreducible component of $\textrm{log }W$ containing $v$.
For (4), we note that by Remark \ref{ranksama}, $C$ must be the locus of $a$.
(5) and (6) follow straight from the definition of a $PQF$-closed set.

\begin{remark}\label{irredykskas}
Let $W$ be a Zariski closed set on the field $F$.
Note that if $W$ is irreducible on some algebraically closed subfield $F' \subset F$ (e.g. over $\overline{\mathbb{Q}}$), then it is irreducible also on $F$.
This follows from Model Completeness for algebraically closed fields or from the fact that if an ideal $\langle f_1, \ldots, f_r \rangle \subset F'[x_1, \ldots, x_n]$ is prime, then also the ideal $\langle f_1, \ldots, f_r \rangle \subset F[x_1, \ldots, x_n]$ is prime.
\end{remark}
 
\begin{remark}\label{kuntapqf}
Let $C$ be a PQF-closed subset of the cover, definable (by a positive quantifier-free formula) over $A \subset V$.
By Corollary \ref{variety}, $\textrm{exp}(C)$ is a variety.
By Corollary \ref{variety} and Model Completeness, $\textrm{exp}(C)$ is definable by polynomials with coefficients in the smallest algebraically closed field containing $A$ (see the arguments in Remark \ref{ranksama}).
Thus, in particular, if $C$ is definable over $\emptyset$, then $\textrm{exp}(C)$ is definable by polynomials with coefficients in $\overline{\mathbb{Q}}$.
\end{remark} 

We now prove (3).  

\begin{lemma}\label{samegaloistype}
The generic elements of an irreducible set PQF-closed over $\emptyset$ have the same Galois type over $\emptyset$.
\end{lemma}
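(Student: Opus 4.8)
The plan is to reduce the statement to the corresponding fact on the field $F$, where it is known that generic points of an irreducible variety have the same type over the field of definition. Let $C \subseteq V^n$ be irreducible and PQF-closed over $\emptyset$, and let $a, a' \in C$ both be generic. By Lemma \ref{irredform} and Remark \ref{kuntapqf}, I may write $C = L \cap m \cdot \textrm{log } W$, where $L$ is linear, $W$ is a variety that does not branch, and $W = \textrm{exp}(C/m)$ (suitably) is definable over $\overline{\mathbb{Q}}$; moreover $\textrm{exp}(C)$ is an irreducible variety over $\overline{\mathbb{Q}}$ by the corollary following Lemma \ref{irredform} together with Remark \ref{irredykskas}. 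Since $\textrm{dim}(C) = \textrm{dim}(\textrm{exp}(C))$ by Lemma \ref{dimensio} and genericity in $C$ is measured by $\textrm{rk}(\cdot/\textrm{cl}(\emptyset))$ which by Remark \ref{ranksama} equals $\textrm{dim}_{\textrm{cl}}$, the points $\textrm{exp}(a)$ and $\textrm{exp}(a')$ are both generic in $\textrm{exp}(C)$ over $\overline{\mathbb{Q}}$, hence have the same field type over $\overline{\mathbb{Q}}$.

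Next I would promote this to equality of quantifier-free types on the cover. The key tool is Lemma \ref{tyypit}: after passing to a maximal linearly independent subtuple (the remaining coordinates are $\mathbb{Q}$-linear combinations, and these linear dependencies are recorded identically for $a$ and $a'$ since they lie in the same linear set $L$ and the locus of $\textrm{exp}(a)$, resp. $\textrm{exp}(a')$, is the same variety $W$), the quantifier-free type over $\textrm{cl}(\emptyset)$ is determined by the data: which $l$:th root $W^{1/l}$ contains $\textrm{exp}(a/l)$, the non-membership in proper subvarieties, and the nontriviality conditions $kv \neq 0$. The non-membership and nontriviality conditions hold automatically for generic $a$. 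For the root data, since $C$ is irreducible and PQF-closed over $\emptyset$, the root $W^{1/l}_{(a)}$ is itself determined over $\overline{\mathbb{Q}}$ and is the same for every point of $C$ (this is essentially the content of the canonical form in Lemma \ref{irredform} together with Theorem \ref{komponentit}: the irreducible component $C$ pins down a single choice of root at every finite level, via the linear set for the toral part by Lemma \ref{singlemth}, and via the stabilization from Theorem \ref{ZilberTheorem} for the non-toral part). Hence $a$ and $a'$ have the same quantifier-free type over $\textrm{cl}(\emptyset)$.

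Finally, I would invoke Remark \ref{le31}: quantifier-free first-order types imply Galois types over closed sets, and $\textrm{cl}(\emptyset)$ is closed. Therefore $t^g(a/\emptyset) = t^g(a'/\emptyset)$, which is what we want (note that working over $\textrm{cl}(\emptyset) \supseteq \emptyset$ only makes the conclusion stronger, and recall that in this section ``over $\emptyset$'' is understood to allow parameters from $\textrm{cl}(\emptyset) = \textrm{log}(\overline{\mathbb{Q}})$ since those were added as constants).

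I expect the main obstacle to be the second paragraph: carefully checking that the irreducible component $C$ genuinely determines a single root $W^{1/l}_{(v)}$ for \emph{every} $v \in C$ and not merely for generic $v$, handling the toral and non-toral parts of $W$ separately and making sure the bookkeeping of linear dependencies among coordinates is consistent between $a$ and $a'$. The field-theoretic input (generics of an irreducible variety over its field of definition have the same type) and the final appeal to Remark \ref{le31} are routine; the delicate point is that Lemma \ref{tyypit} must be applied with exactly the right variety, namely the locus of $\textrm{exp}(a)$ over $\mathbb{Q}(\textrm{exp}(\textrm{cl}(\emptyset))) = \overline{\mathbb{Q}}$, and one must confirm this locus is $\textrm{exp}(C)$ rather than a proper subvariety — which follows from genericity together with Remark \ref{ranksama}.
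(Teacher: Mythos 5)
Your proposal is correct and follows essentially the same approach as the paper: pass to a maximal linearly independent subtuple, use the canonical form of the locus together with the root-tracking machinery (Lemma \ref{tyypit}) to show the quantifier-free types agree, then invoke the fact that quantifier-free types imply Galois types, and finally extend the automorphism back to the full tuple via the shared linear relations. One small simplification the paper makes that you circle around: after passing to the linearly independent subtuple $a'$, its locus is already of the form $m\cdot\textrm{log }W$ with $W$ non-branching and not contained in any torus, so there is no separate toral part to handle and the $l$:th root containing $\textrm{exp}(a'/l)$ is unique at every level; your worry about ``handling the toral and non-toral parts separately'' evaporates once the reduction to the subtuple is done first, and your preliminary observation about field types over $\overline{\mathbb{Q}}$ is a warm-up that is not actually needed.
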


\begin{proof}
Let $C \subset V^n$ be irreducible and PQF-closed over $\emptyset$, $a=(a_1, \ldots, a_n), b=(b_1, \ldots, b_n) \in C$ generic.
We may without loss assume that there is some $k \le n$ such that $a_1, \ldots, a_k$ are linearly independent and $a_{k+1}, \ldots, a_n \in \textrm{span}(a_1, \ldots, a_k)$.
Let $L$ be the linear set given by the equations expressing this.
Then, we have $a \in L$, and thus, as $a$ is generic on $C$, also $C \subseteq L$.
Hence, $b \in L$, so $b_{k+1}, \ldots, b_n \in \textrm{span}(b_1, \ldots, b_k)$.
In particular, $a_{k+1}, \ldots, a_n \in \textrm{bcl}(a_1, \ldots, a_k)$ and $b_{k+1}, \ldots, b_n \in \textrm{bcl}(b_1, \ldots, b_k)$.
Thus, by Lemma \ref{bcl}, 
$$\textrm{dim}(a_1, \ldots, a_k)=\textrm{dim}(a)=\textrm{dim}(b)=\textrm{dim}(b_1, \ldots, b_k).$$

Denote $a'=(a_1, \ldots, a_k)$ and $b'=(b_1, \ldots, b_k)$.
Let $D$ be the locus of $a'$.
Since $a$ is generic on $C$, we have $b' \in D$, and thus also $b'$ is generic on $D$ as it has the same dimension as $a'$.
Since there are no linear dependencies between the elements $a_1, \ldots, a_k$, we have $D=m \textrm{log } W$ for some variety $W$ that does not branch.
Then, $W^m$ is the locus of $\textrm{exp}(a')$.
Let $l \in \mathbb{N}$ be arbitrary.
We have $\textrm{exp}(\frac{a'}{m}) \in W.$
Since $W$ does not branch, it has a unique $l$:th root, $W^{\frac{1}{l}}$.
Now, $\frac{a'}{ml} \in \frac{1}{l} \textrm{log } W$, and thus $\textrm{exp}(\frac{a'}{ml}) \in W^{\frac{1}{l}}$.
Then,
$$\textrm{exp}\left(\frac{a'}{l} \right) = \left( \textrm{exp} \left( \frac{a'}{lm}\right)\right)^m \in (W^{\frac{1}{l}})^m,$$
where $(W^{\frac{1}{l}})^m$ is a single $l$:th root of $W^m$.  
Thus, for each $l$, we are able to determine in which $l$:th root of $W^m$ the element $\textrm{exp}(\frac{a'}{l})$ lies. 
Since $W^m$ is the locus of $\textrm{exp}(a')$, we have $\textrm{exp}(a') \notin Y$ for all $Y \subset W$ such that $\textrm{dim}(Y)<\textrm{dim}(W)$, and since $a_1, \ldots, a_k$ are linearly independent, $za' \neq 0$ for every $z \in \mathbb{Z}^n \setminus \{0\}$.
Hence, by Lemma \ref{tyypit}, the set $D$ determines the quantifier-free type of $a'$ over $\emptyset$.
The element $b'$ is also generic on $D$, so the same argument applies to it.
By \cite{monet}, Lemma 3.1 and \cite{kir}, Theorem 3.3 and QM5, quantifier free types determine Galois types. 
Hence $t^g(a'/\emptyset)=t^g(b'/\emptyset)$.
 
There is an automorphism taking $a'=(a_1, \ldots, a_k)$ to $b'=(b_1, \ldots, b_k)$.
Since the elements $b_{k+1}, \ldots, b_n$ are determined from the elements $b_1, \ldots, b_k$ by exactly the same linear equations that determine the elements $a_{k+1}, \ldots, a_n$ from $a_1, \ldots, a_k$, it must map $(a_{k+1}, \ldots, a_n)$ to $(b_{k+1}, \ldots, b_n)$, and hence $t^g(a/\emptyset)=t^g(b/\emptyset)$. 
 \end{proof}

The proof of (7) is an adaptation of the proof of Lemma 5.14 in \cite{HrZi}.  
For it, we will need the following definition. 
  
\begin{definition}
Let $C \subset V^{n+m}$ be an irreducible set.
We say an element $a \in V^n$ is \emph{good} for $C$ if there is some $b \in V^m$ so that $(a,b)$ is a generic element of $C$.
\end{definition}  
 
We now present a couple of lemmas needed for (7).
 
\begin{lemma}\label{poikkeuspisteet}
Suppose $C \subset V^{n+m}$ is irreducible, $a \in V^n$ is good for $C$, $a \to a'$ and $rk(a \to a') \le 1$.
Then, $\textrm{dim}(C(a')) \le \textrm{dim}(C(a))$.
\end{lemma}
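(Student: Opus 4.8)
The plan is to analyse the fibres of $C$. Write $C(x)=\{y\in V^m\,:\,(x,y)\in C\}$; this is again a PQF-closed set, since it is defined, over the parameters of $C$ together with $x$, by the same positive quantifier-free formulas with $x$ substituted for the first $n$ variables. Hence, by Corollary \ref{variety}, Lemma \ref{dimensio} and Remark \ref{ranksama}, its topological dimension equals the pregeometry dimension of any of its generic points. As $a$ is good for $C$, fix $b\in V^m$ with $(a,b)$ generic in $C$. I would first check that $b$ is then generic in $C(a)$, i.e. that $\textrm{dim}(b/a)=\textrm{dim}(C(a))$: for every $y\in C(a)$ we have $(a,y)\in C$, so $\textrm{dim}(a,y)\le\textrm{dim}(C)=\textrm{dim}(a,b)$, and additivity of the pregeometry dimension yields $\textrm{dim}(y/a)\le\textrm{dim}(b/a)$; applying this with $y$ a generic point of $C(a)$ and using $b\in C(a)$ gives the equality, and therefore $\textrm{dim}(C)=\textrm{dim}(a)+\textrm{dim}(C(a))$.

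Next I would exploit the specialization. If $C(a')=\emptyset$ there is nothing to prove, so assume $C(a')\neq\emptyset$ and pick a generic point $d$ of $C(a')$, so that $(a',d)\in C$ and $\textrm{dim}(d/a')=\textrm{dim}(C(a'))$. Since the locus of $(a',d)$ over $\textrm{cl}(\emptyset)$ is a PQF-closed subset of $C$, we get $\textrm{dim}(a',d)\le\textrm{dim}(C)$, and so, again by additivity,
\[
\textrm{dim}(a')+\textrm{dim}(C(a'))=\textrm{dim}(a',d)\le\textrm{dim}(C)=\textrm{dim}(a)+\textrm{dim}(C(a)),
\]
whence $\textrm{dim}(C(a'))\le\textrm{dim}(C(a))+(\textrm{dim}(a)-\textrm{dim}(a'))=\textrm{dim}(C(a))+\textrm{rk}(a\to a')$. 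In particular, if $\textrm{rk}(a\to a')=0$ the claim already follows.

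It remains to rule out, when $\textrm{rk}(a\to a')=1$, the possibility $\textrm{dim}(C(a'))=\textrm{dim}(C(a))+1$. In that case every inequality in the display above is an equality, so $\textrm{dim}(a',d)=\textrm{dim}(C)=\textrm{dim}(a,b)$, i.e. $(a',d)$ is also a generic point of $C$. By Lemma \ref{samegaloistype} there is then an automorphism of $V$ fixing $\textrm{cl}(\emptyset)$ pointwise and sending $(a,b)$ to $(a',d)$; acting on the first $n$ coordinates it sends $a$ to $a'$, so $t^g(a/\emptyset)=t^g(a'/\emptyset)$ and hence $\textrm{dim}(a)=\textrm{dim}(a')$, contradicting $\textrm{rk}(a\to a')=1$. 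The step I expect to require the most care is the first one — verifying that $b$ is generic in the fibre $C(a)$ and that the fibre dimension behaves additively — because it is there that one must combine the topological notion of dimension, its coincidence with the pregeometry dimension (Remark \ref{ranksama}), and additivity of the latter.
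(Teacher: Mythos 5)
Your proof is correct (at least for the main case in which $C$ is PQF-closed over $\emptyset$, which is what the paper explicitly treats as well) but it takes a genuinely different route. The paper descends to the field: it observes that $\textrm{exp}(a)\to\textrm{exp}(a')$ is a specialization of rank $\le 1$ on the Zariski geometry $F$, invokes Lemma~4.12 of \cite{HrZi} to bound $\textrm{dim}(\textrm{exp}(C)(\textrm{exp}(a')))$ by $\textrm{dim}(\textrm{exp}(C)(\textrm{exp}(a)))$, and then lifts back to the cover; the delicate point there is that a field-fibre point $x\in\textrm{exp}(C)(\textrm{exp}(a))$ only produces a cover point $(a+k,b)\in C$ for some $k\in K^n$ rather than $(a,b)$, so one must argue via $\textrm{dim}(a+k)=\textrm{dim}(a)$. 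You instead stay entirely on the cover: you establish the fibre-dimension identity $\textrm{dim}(C)=\textrm{dim}(a)+\textrm{dim}(C(a))$ from additivity and $a$ being good, deduce $\textrm{dim}(C(a'))\le\textrm{dim}(C(a))+\textrm{rk}(a\to a')$, and then rule out the one remaining case by observing that equality forces $(a',d)$ to be generic in $C$, whence Lemma~\ref{samegaloistype} makes $a$ and $a'$ Galois-conjugate and so of equal dimension, contradicting $\textrm{rk}(a\to a')=1$. What each buys: your argument is shorter and more self-contained, replacing the appeal to the Dimension Theorem machinery behind \cite{HrZi} Lemma~4.12 with the uniqueness of the generic Galois type (which in this paper has already been proved as axiom~(3)); the paper's argument is more in the spirit of the rest of the section, which systematically reduces cover statements to field statements. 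One small thing you should add: like the paper, note explicitly that you are assuming $C$ is PQF-closed over $\emptyset$ so that the locus of $(a',d)$ (resp.\ $(a,y)$) over $\emptyset$ is contained in $C$, and remark that the case where $C$ requires parameters is handled analogously by carrying those parameters through the dimension computations and through the parameter version of Lemma~\ref{samegaloistype}.
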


\begin{proof}
We give the argument in the case that $C$ is PQF-closed over $\emptyset$.
If it is not, then there are some parameters needed in defining $C$, and we have to take them into account in the calculations that will follow.
However, the calculations will remain similar to the ones we present here.  

Suppose for the sake of contradiction that $r_1=\textrm{dim}(C(a'))> \textrm{dim} (C(a))=r_2$.
It follows from the assumptions that on the field, $\textrm{exp}(a) \to \textrm{exp}(a')$ and $\textrm{rk}(\textrm{exp}(a) \to \textrm{exp}(a')) \le 1$.
Since algebraically closed fields are Zariski geometries, \cite{HrZi}, Lemma 4.12 gives   
\begin{eqnarray}\label{aboveinequality}
\textrm{dim}(\textrm{exp}(C)(\textrm{exp}(a'))) \le \textrm{dim}(\textrm{exp}(C)(\textrm{exp}(a))).
\end{eqnarray}

Let $b' \in C(a')$ be such that $\textrm{dim}(b'/a')=r_1$.
Then, $\textrm{exp}(b') \in \textrm{exp}(C)(\textrm{exp}(a'))$, and by Lemma
\ref{dimensio},  $\textrm{dim}(\textrm{exp}(C)(\textrm{exp}(a'))) \ge r_1$.
By (\ref{aboveinequality}), there is some $x \in \textrm{exp}(C)(\textrm{exp}(a))$ such that $\textrm{dim}(x/\textrm{exp}(a)) \ge r_1$, and thus some $k \in K^n$, $b \in V^m$ such that $(a+k, b) \in C$ and $x=\textrm{exp}(b)$.
Since $\textrm{dim}(a+k)=\textrm{dim}(a)$, we have
$$dim(b/a+k) \le \textrm{dim}(C)-\textrm{dim}(a)=r_2,$$
a contradiction since we also have 
$$\textrm{dim}(b/a+k)=\textrm{dim}(\textrm{exp}(b)/\textrm{exp}(a))\ge r_1>r_2.$$
\end{proof}

\begin{lemma}\label{henkitore}
Suppose $C$ is irreducible and $a$ is good for $C$.
Suppose $(C_i)_{i<\omega}$ is a collection of irreducible sets such that it is permuted by all automorphisms of $V$.
Let $b$ be an element such that $ab$ is good for each $C_i$ and that for any $c \in C(a)$ generic over $b$
it holds that $c \in C_i(ab)$ for some $i$.
Assume $(a, b) \to (a', b')$, $a \to a'$ is a strongly regular specialization, $\textrm{rk}(a \to a') \le 1$,
and $c'$ is such that $(a',c') \in C$.

Then, there is some $i <\omega$ so that $(a',b',c') \in C_i$.
\end{lemma}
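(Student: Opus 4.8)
The plan is to transport a generic preimage of the fibre of $C$ over $a$ along the given specialization to a generic point of the locus of $(a',b',c')$, and then to move that point onto $(a',b',c')$ itself by an automorphism. As in the proof of Lemma~\ref{poikkeuspisteet}, I would first reduce to the case in which $C$ and all the $C_i$ are PQF-closed over $\emptyset$; the general case merely forces us to carry finitely many extra parameters through the computations. Write $P$ for the locus of $(a',b',c')$ over $\emptyset$. By the description of loci used for axioms (2) and (4), $P$ is an irreducible PQF-closed set and $(a',b',c')$ is generic in it. Since the collection $(C_i)_{i<\omega}$ is permuted by every automorphism of $V$, and by Lemma~\ref{samegaloistype} any two generic elements of an irreducible set PQF-closed over $\emptyset$ have the same Galois type over $\emptyset$, it therefore suffices to find \emph{some} generic element $(a',b',c^{*})$ of $P$ lying in some $C_i$: an automorphism of $V$ sending $(a',b',c^{*})$ to $(a',b',c')$ then carries $C_i$ to some $C_j$, whence $(a',b',c') \in C_j$.

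To produce such a point, choose $c$ generic in the fibre $C(a)$ over $b$; by the hypothesis on $b$ there is $i<\omega$ with $(a,b,c) \in C_i$. Because $\C$ is countable and $V$ is a saturated monster, the conditions ``$(a,b,c)$ specializes to $(a',b',c^{*})$'' and ``$(a',b',c^{*})$ is generic in $P$'' amount to countably many closed conditions on $c^{*}$; by saturation they are jointly realizable as soon as they are consistent, and a realization $c^{*}$ then satisfies $(a',b',c^{*}) \in C_i$ (since $C_i \in \C$ and specializations preserve membership in members of $\C$) and is generic in $P$, as wanted. Consistency here says that the generic fibre of $C$ over $a$ is not absorbed into the proper closed subsets of $P$ pulled back along the specialization, and this is exactly where the remaining hypotheses enter: $a \to a'$ is strongly regular of rank $\le 1$, so Lemma~\ref{poikkeuspisteet} gives $\textrm{dim}(C(a')) \le \textrm{dim}(C(a))$, i.e.\ the fibre of $C$ over $a'$ is no larger than the generic fibre over $a$, which leaves room to keep the image generic in $P$; the corresponding inequality is checked by passing to the image on the field, which is a Zariski geometry, so that \cite{HrZi}, Lemma~4.12, applies precisely as in the proof of Lemma~\ref{poikkeuspisteet}.

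The step I expect to be the main obstacle is precisely this consistency claim: that the specialization $(a,b) \to (a',b')$ can be extended to a generic point of $C(a)$ over $b$ in such a way that the image is generic in $P$ rather than falling into a proper closed subset. Making this precise demands keeping track of fibre dimensions simultaneously on the cover and on the field — one variable-block deeper than in Lemma~\ref{poikkeuspisteet} but of the same flavour — and it is here that strong regularity of $a \to a'$, the rank bound $\textrm{rk}(a \to a') \le 1$, the goodness of $ab$ for each $C_i$, and the covering property of the $C_i$ must all be used together. The remaining ingredients — the reduction to the $\emptyset$-definable case, the appeal to uniqueness of the generic Galois type (Lemma~\ref{samegaloistype}), and the $\textrm{Aut}(V)$-invariance of the collection $(C_i)_{i<\omega}$ — are routine.
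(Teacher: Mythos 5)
The proposal is not correct: the consistency claim that you flag as ``the main obstacle'' is in fact the whole content of the lemma, and as stated your reduction makes it \emph{false} in general, so the argument is circular rather than merely incomplete. Here is why. Your two requirements on $c^*$ are: (A) $(a,b,c)\to(a',b',c^*)$ is a specialization, which, since specializations are tested on members of $\mathcal{C}$, is equivalent to $(a',b',c^*)\in L$ where $L$ is the locus of $(a,b,c)$ over $\emptyset$; and (B) $(a',b',c^*)$ is generic in $P$, the locus of $(a',b',c')$. Since $P$ is irreducible and $L$ is PQF-closed over $\emptyset$, if $P\not\subseteq L$ then $P\cap L$ is a \emph{proper} closed subset of $P$, hence of strictly smaller dimension, and so contains no generic point of $P$. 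Thus (A) and (B) are jointly satisfiable if and only if $P\subseteq L$, i.e.\ if and only if $(a',b',c')\in L$, i.e.\ if and only if $(a,b,c)\to(a',b',c')$ is \emph{already} a specialization. But the hypotheses only give you $(a,b)\to(a',b')$ and $(a',c')\in C$; nothing forces $(a',c')$ to lie in the locus of $(a,c)$ (which, with $c$ generic in $C(a)$ over $b$, will typically be a proper subset of $C$), so $(a',b',c')$ need not lie in $L$ at all. Showing that one can extend $(a,b)\to(a',b')$ to a specialization with target $(a',b',c')$ is precisely what Lemma~\ref{prop4.7} establishes \emph{using} Lemma~\ref{henkitore}; you cannot presuppose it here. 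A smaller but real error: ``$(a',b',c^*)$ is generic in $P$'' is not a conjunction of closed conditions -- it requires $(a',b',c^*)\notin Q$ for every proper closed $Q\subsetneq P$ over $\emptyset$, which are open conditions, so the saturation step is not as routine as claimed.

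The paper's proof avoids this by working not with the locus of $(a,b,c)$ but with the auxiliary set $C^{*}=\{(x,y,z)\in D\times V^m\mid (x,z)\in C\}$, where $D$ is the locus of $(a,b)$. Since $(a',b')\in D$ and $(a',c')\in C$, the point $(a',b',c')$ lies in $C^{*}$ unconditionally. The real work is then a dimension count on the irreducible components of $C^{*}$: using Lemma~\ref{poikkeuspisteet} for the upper bound and the Dimension Theorem~\ref{dimthm} (applied to $C^{*}\cong(D\times C)\cap(\Delta_E\times V^{n+m})$, together with Lemma~5.4 of~\cite{HrZi} on the field) for the lower bound, one shows that every irreducible component of $C^{*}$ not contained in the bad set $X$ has dimension exactly $\dim(D)+\dim(c/a)$. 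Strong regularity and the rank bound $\textrm{rk}(a\to a')\le1$ are used exactly to ensure that the component $C'$ containing $(a',b',c')$ avoids $X$. Only then does the automorphism/genericity argument you sketch become available: one takes a generic $(d,e,f)$ of $C'$, shows $(d,e)$ is generic in $D$ with $f\da_d e$, maps $(d,e)\mapsto(a,b)$ by an automorphism to produce a generic $c\in C(a)$ over $b$, invokes the covering hypothesis to place it in some $C_i$, and pulls back to $C'\subseteq C_j$. So the automorphism step at the end of your proposal is in the spirit of the paper, but it must be preceded by the $C^{*}$-construction and dimension analysis; without them, the point you want to move by automorphism need not exist.

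Incidentally, note also that your step 2 (``$(a',b',c^*)\in C_i$ since specializations preserve members of $\mathcal{C}$'') requires the $C_i$ to belong to $\mathcal{C}$, whereas the lemma only assumes they are irreducible sets permuted by automorphisms; the paper's argument sidesteps this by deducing $C'\subseteq C_j$ from genericity of $(d,e,f)$ in $C'$ rather than from preservation under specialization.
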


\begin{proof} 
Let $D$ be the locus of $(a, b)$, and let $m=\textrm{lg}(c)$.
Denote 
$$C^{*}=\{(x,y,z) \in D \times V^m \, | \, (x,z) \in C\}.$$
Let $E$ be the locus of $a$.%

We first show that the irreducible components of $C^*$ not contained in
$$X=\{(x,y,z) \in D \times V^m \, | \, \textrm{dim}(a)-\textrm{dim}(x)>1 \textrm{ or $\textrm{exp}(x)$ is not regular on $\textrm{exp}(E)$}\}$$
all have same dimension $\textrm{dim}(D)+\textrm{dim}(c/a)$ (``regular" is in the sense of \cite{HrZi}, defined on p.20).
(Note that the set $X$ is not definable.)
Suppose $Y$ is some such irreducible component and
$(d,e,f) \in Y$  generic. 
Then, we have $\textrm{dim}(d) \ge \textrm{dim}(a)-1$, and thus by Lemma \ref{poikkeuspisteet}, $\textrm{dim}(f/d) \le \textrm{dim}(c/a)$.
Hence, 
$$\textrm{dim}(d,e,f) \le \textrm{dim}(D)+\textrm{dim}(c/a).$$
 
To prove $\textrm{dim}(Y) \ge  \textrm{dim}(D)+\textrm{dim}(c/a)$, we note that in the PQF-topology, $C^*$ is isomorphic to $(D \times C) \cap (\Delta_E \times V^{n+m})$, where $n=\textrm{lg}(b)$ and 
$\Delta_E=\{(x,y) \in E \times E \, | \, x=y \}$.
Both $D \times C$ and $\Delta_E \times V^{n+m}$ are irreducible.
By Theorem \ref{dimthm},  $\textrm{exp}(Y)$ is an irreducible component of $\textrm{exp}(C^*)$.

We obtain a copy of  $\textrm{exp}(C^*)$ by intersecting $\textrm{exp}(D) \times \textrm{exp}(C)$ with a suitable $\textrm{exp}(E)$-diagonal.
Indeed, if we have $(x, y, c_1, c_2) \in \textrm{exp}(D) \times \textrm{exp}(C)$, where $(x, y) \in \textrm{exp}(D)$ and $(c_1, c_2) \in \textrm{exp}(C)$, then the diagonal $\Delta$ expressing ``$x=c_1$" is as wanted.  
Since $Y$ is not contained in $X$, the isomorphic copy of $\textrm{exp}(Y)$ obtained in the above procedure contains some point $(c_1, y, c_1, c_2)$ where $c_1$ is a regular point of $\textrm{exp}(E)$.
Thus, using Lemma 5.4 in \cite{HrZi} to calculate dimensions in $W=\textrm{exp}(E) \times F^n \times \textrm{exp}(E) \times F^m$ and Lemma \ref{dimensio}, we obtain
\begin{eqnarray*}
\textrm{dim}(Y) &\ge& \textrm{dim}(D)+\textrm{dim}(C)+\textrm{dim}(\Delta_E)+n+m-\textrm{dim}(W) \\
&=& \textrm{dim}(D)+\textrm{dim}(C)-\textrm{dim}(E)\\
&=& \textrm{dim}(D)+\textrm{dim}(c/a),
\end{eqnarray*}
since $\textrm{dim}(W)=2\textrm{dim}(E)+m+n$, $\textrm{dim}(\Delta_E)=\textrm{dim}(E)$ and
$\textrm{dim}(C)-\textrm{dim}(E)=\textrm{dim}(c/a)$.
 
Let $C'$ be the irreducible component of $C^*$ containing $(a', b',c')$.
Then, $C'$ is not included in $X$.  
Indeed, $\textrm{rk}(a \to a') \le 1$ and the fact that $a \to a'$ is strongly regular implies that $\textrm{exp}(a) \to \textrm{exp}(a')$ is regular, i.e. $\textrm{exp}(a')$ is regular on the locus of $\textrm{exp}(a)$ which is
$\textrm{exp}(E)$. 
Let $(d,e,f)$ be a generic point of $C'$.
Then, $(d,f) \in C$ and $\textrm{dim}(d) \ge \textrm{dim}(a)-1$. 
By Lemma \ref{poikkeuspisteet}, $\textrm{dim}(f/d) \le \textrm{dim}(c/a)$, so $\textrm{dim}(f/d,e) \le \textrm{dim}(c/a)$.
As $\textrm{dim}(d,e,f)=\textrm{dim}(D)+\textrm{dim}(c/a)$, this implies that $(d,e)$ is a generic point of $D$.
It also follows that $\textrm{dim}(c/a)=\textrm{dim}(f/d,e)=\textrm{dim}(f/d)$, so in particular $f \da_d e$.
There is some automorphism taking $(d,e) \mapsto (a,b)$.
This automorphism then takes $f$ to some element $c \in C(a)$ such that $c \da_a b$.
By our assumptions, $(a,b,c) \in C_i$ for some $i<\omega$.
Since automorphisms permute the collection of the sets $C_i$, we have that $(d,e,f) \in C_j$ for some $j< \omega$.
Hence, $C' \subset C_j$, so in particular $(a',b',c') \in C_j$.
\end{proof}

The following is an analogue of Proposition 4.7 in \cite{HrZi}.

\begin{lemma}\label{prop4.7}
Let $C$, $D$ be irreducible, $a \in D$ generic, and suppose $a$ is good for $C$.
 Let $r=\textrm{dim}(C(a))\ge 0$.
Let $c$ and $c'$ be such that $(a,c) \to (a',c')$ and $a \to a'$ is strongly regular and $\textrm{rk}(a \to a') \le 1$.
Let $b' \in C(a')$.

Then, there exists $b \in C(a)$ such that $\textrm{dim}(b/ac)=r$ and $(a,b,c) \to (a',b',c')$.
 \end{lemma}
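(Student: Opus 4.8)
The plan is to follow the strategy of Proposition 4.7 in \cite{HrZi}, adapting it to our non-elementary setting using Lemma \ref{henkitore} as the key technical device. First I would reduce the problem to a situation where Lemma \ref{henkitore} can be applied. Since $a$ is good for $C$, there is a generic $(a,b_0) \in C$; the element $b$ we seek should be the ``generic'' choice in $C(a)$ that still specializes compatibly with $c \to c'$ through $b'$. The natural candidate for the collection $(C_i)_{i<\omega}$ is obtained as follows: consider the locus $C^{*}$ of the generic point $(a, b_0, c)$ where $b_0$ ranges over generics of $C(a)$ and $c$ is as given, then take the irreducible components of the relevant PQF-closed set over $\emptyset$; by Lemma \ref{irredform} and the countability of the collection of irreducible PQF-closed sets over $\emptyset$, this is a countable family permuted by automorphisms of $V$.

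Concretely, I would set $m = \mathrm{lg}(c)$, let $D$ be the locus of $a$ (which is the given $D$, since $a$ is generic in $D$), and consider the PQF-closed set
$$
C^{\sharp} = \{(x, y, z) \in V^{n} \times V^{\mathrm{lg}(b_0)} \times V^{m} \mid (x,y) \in C, \ (x, z) \to \text{something compatible}\},
$$
more precisely the locus of $(a, b_0, c)$ over $\emptyset$ for $b_0$ a generic of $C(a)$ independent from $c$ over $a$. Its irreducible components form the family $(C_i)_{i<\omega}$. Then the hypothesis of Lemma \ref{henkitore} ``for any generic $c_1 \in C(a)$ over $b_0$ we have $c_1 \in C_i(a b_0)$ for some $i$'' holds by construction, as does ``$a b_0$ is good for each $C_i$''. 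Applying Lemma \ref{henkitore} to the specialization $(a, b_0) \to (a', b_0')$ (for a suitable $b_0'$, chosen via (QM5) and homogeneity so that $a b_0 \to a' b_0'$ holds) together with the strongly regular $a \to a'$ and $\mathrm{rk}(a \to a') \le 1$, we get that the component $C'$ of $C^{\sharp}$ containing $(a', b', c')$ is one of the $C_j$; pulling back along an automorphism sending a generic of $C'$ to $(a, b_0, c)$ produces the desired $b \in C(a)$ with $\mathrm{dim}(b/ac) = r$ and $(a,b,c) \to (a',b',c')$. The dimension count $\mathrm{dim}(b/ac) = r = \mathrm{dim}(C(a))$ will follow from the dimension computation inside Lemma \ref{henkitore}: the relevant components not contained in the bad set $X$ all have dimension $\mathrm{dim}(D) + \mathrm{dim}(c/a) + r$, forcing $b$ to be generic in $C(a)$ over the parameters.

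The main obstacle I anticipate is \emph{verifying that $(a',b',c')$ lands in a component $C'$ not contained in the exceptional set $X$} (in the sense of Lemma \ref{henkitore}), so that the lemma genuinely applies. This requires knowing that $\mathrm{exp}(a')$ is a regular point of $\mathrm{exp}(E)$ where $E$ is the locus of $a$ — which is exactly where strong regularity of $a \to a'$ and $\mathrm{rk}(a \to a') \le 1$ are used, via the remark that strongly regular specializations on the cover project to regular specializations on the field. A secondary delicate point is the bookkeeping needed to produce the auxiliary element $b_0'$ and to ensure all the specializations $(a, b_0, c) \to (a', b_0', c')$ and $(a', b', c')$ sit inside the \emph{same} PQF-closed set over $\emptyset$, so that ``being in a common component'' is meaningful; this is handled by working throughout with loci over $\emptyset = \mathrm{cl}(\emptyset)$ and invoking Remark \ref{kuntapqf} and Corollary \ref{olennainen} to control how these loci and their components behave. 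Once the setup is in place, the dimension arithmetic is routine given Lemma \ref{dimensio}, Theorem \ref{dimthm}, and the field-level estimate (\cite{HrZi}, Lemma 4.12) already exploited in Lemma \ref{poikkeuspisteet}.
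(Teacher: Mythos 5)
Your high-level plan is right: the lemma is proved by building a countable $\emptyset$-definable family $(C_i)$ of irreducible sets and then feeding the given data into Lemma~\ref{henkitore}. But the details of how that family is built and how the roles of Lemma~\ref{henkitore} are filled are off in ways that would derail the proof.

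First, the family. You first sketch $C^\sharp$ as a set of $(x,y,z)$ with ``$(x,z)\to$ something compatible'' and then ``more precisely'' propose taking the \emph{locus} of $(a,b_0,c)$ over $\emptyset$ for a generic $b_0\in C(a)$. A locus is a single irreducible set, so its only irreducible component is itself; you lose exactly the family structure that $(C_i)_{i<\omega}$ is supposed to provide, and the coverage hypothesis of Lemma~\ref{henkitore} can fail because $C(a)$ can meet several components over $(a,c)$. What the argument actually needs is the $\emptyset$-closed set
$C^{*}=\{(x,z,y)\mid (x,z)\in E,\ (x,y)\in C\}$, where $E$ is the locus of $(a,c)$, and then one must take only those irreducible components $C_i$ with $\dim(C_i(a,c))=r$. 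This is an intersection of two irreducible $\emptyset$-closed sets, which is exactly why it may be reducible and why a genuine family appears.

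Second, the role assignment in Lemma~\ref{henkitore} is scrambled. You want to take $b$ (in \ref{henkitore}) to be your auxiliary generic $b_0$ and then manufacture a $b_0'$ so that $(a,b_0)\to(a',b_0')$. But the correct instantiation has $b$ (in \ref{henkitore}) $=c$ and $c'$ (in \ref{henkitore}) $=b'$, so the relevant specialization is the \emph{given} $(a,c)\to(a',c')$; no new $b_0'$ is needed, and building one via (QM5) is a detour that does not give what you need. With the roles set this way, the coverage hypothesis of \ref{henkitore} becomes: every generic $b\in C(a)$ over $c$ lies in some $C_i(a,c)$. You assert this ``holds by construction,'' but it does not: one has to show that every irreducible component of $C(a)$ has dimension exactly $r$, which is where ``$a$ generic in $D$ $\Rightarrow$ $\exp(a)$ regular on $\exp(D)$'' and Lemma~5.4 of \cite{HrZi} enter (applied to $\exp(D\times V^m)$), together with Lemma~\ref{poikkeuspisteet} for the upper bound. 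Only then does $b\in C(a)$ force $(a,c,b)$ into some $C_i$, and only then does \ref{henkitore} give $b'\in C_i(a',c')$ for some $i$, from which a generic $b$ of $C_i(a,c)$ yields the conclusion.

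Finally, the concern you flag as the ``main obstacle'' --- that $(a',b',c')$ might land in the exceptional set $X$ --- is already internal to the proof of Lemma~\ref{henkitore}; its hypotheses ($a\to a'$ strongly regular with $\mathrm{rk}\le 1$) are precisely the given assumptions in the present lemma, so nothing further needs to be checked about $X$ at this level.
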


\begin{proof}
Let $E$ be the locus of $(a,c)$, $C^{*}=\{(x,z,y) \, | \, (x,z) \in E, (x,y) \in C\}$.
Let $C_i$, $i<\omega$ be the irreducible components of $C^*$ satisfying $\textrm{dim}(C_i(a,c))=r$.
We claim that
$$C(a) \subseteq \bigcup_{i<\omega} C_i(a,c).$$

We show first that every irreducible component of $C(a)$ is of dimension $r$. 
Let $m=\textrm{lg}(b')$.
Then, $C \subset D \times V^m$ and $C(a)=C \cap (\{a\} \times V^m)$.
Since $a$ is generic in $D$, $\textrm{exp}(a)$ is regular on $\textrm{exp}(D)$.
Applying Lemma 5.4 in \cite{HrZi} on $\textrm{exp}(D \times V^m)$,
we get that every nonempty irreducible component of $C(a)$ has dimension at least
$\textrm{dim}(C)+m-\textrm{dim}(D)-m=r.$

So in every such component, there is an element $x$ such that $\textrm{dim}(x/ca)=r$.
Then, $(a,c,x) \in C'$ for some irreducible component $C'$ of $C^*$, and thus $x \in \bigcup_{i<\omega} C_i(a,c)$.
Hence, the irreducible component of $C(a)$ containing $x$ is included in $\bigcup_{i<\omega} C_i(a,c)$.
We conclude that $C(a) \subseteq \bigcup_{i<\omega} C_i(a,c)$.
 
Now, by Lemma \ref{henkitore}, $b' \in C_i(a',c')$ for some $i$.
Let $b$ be a generic point of $C_i(a,c)$.
Since $C_i$ is irreducible and $(a,b,c)$ is a generic point of $C_i$, we have $(a,b,c) \to (a',b',c')$.
\end{proof} 

We are now ready to prove (7).

\begin{theorem}\label{amalg} 
Let $a \to a'$ be a strongly good specialization with $\textrm{rk}(a \to a') \le 1$.
Then any specializations $ab \to a'b'$, $ac \to a'c'$ can be amalgamated: there exists $b^{*}$, independent from $c$ over $a$ such that $\textrm{t}^g(b^{*}/a)=\textrm{t}^g(b/a)$, and $ab^{*}c \to a'b'c'$.
\end{theorem}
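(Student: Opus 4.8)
The plan is to adapt the proof of Lemma 5.14 in \cite{HrZi}, using the structure that has been set up: a strongly good specialization is built recursively from strongly regular pieces, so the natural strategy is to induct on the recursive definition of ``strongly good'' (Definition \ref{good}). First I would dispose of the base case, where $a \to a'$ is strongly regular. Here the key tool is Lemma \ref{prop4.7}: given the specialization $ac \to a'c'$ with $a \to a'$ strongly regular and $\textrm{rk}(a\to a')\le 1$, and given the element $b'$ with $a'b' \to$ something, I want to produce $b$ on the ``$a$-side'' with the right dimension and the right specialization behavior. More precisely, let $C$ be the locus of $(a,b)$; since $ab \to a'b'$, we have $(a',b')\in C$, and $a$ is good for $C$. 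Applying Lemma \ref{prop4.7} with $D$ the locus of $a$ (note $a$ is generic in $D$ by definition when $a \to a'$ is strongly regular in the generic case, and one handles the isomorphism case trivially), we obtain $b^{*}\in C(a)$ with $\textrm{dim}(b^{*}/ac)=r=\textrm{dim}(C(a))$ and $(a,b^{*},c)\to(a',b',c')$. The condition $\textrm{dim}(b^{*}/ac)=\textrm{dim}(C(a))=\textrm{dim}(b/a)$ forces $b^{*}\da_a c$, and since $b^{*}$ is generic in $C(a)$ and $b$ is too (as $(a,b)$ is generic in $C$), axiom (3) / Lemma \ref{samegaloistype} applied in the appropriate relativized form gives $\textrm{t}^g(b^{*}/a)=\textrm{t}^g(b/a)$. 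That settles the base case.

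For the inductive step, suppose $a=(a_1,a_2,a_3)$, $a'=(a_1',a_2',a_3')$ with $(a_1,a_2)\to(a_1',a_2')$ strongly good of rank $\le 1$, $a_1\to a_1'$ an isomorphism, and $a_3\in\textrm{cl}(a_1)$. The idea is that $a_3$ and $a_3'$ carry no new information: since $a_1\to a_1'$ is an isomorphism and $a_3\in\textrm{cl}(a_1)=\textrm{bcl}(a_1)$ by Lemma \ref{bcl}, the element $a_3$ is determined up to Galois type (indeed essentially uniquely along the specialization) by $a_1$, so the amalgamation problem over $a$ reduces to the amalgamation problem over $(a_1,a_2)$, to which the induction hypothesis applies. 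Concretely, I would first replace $b$ by $a_3b$ and $c$ by $a_3c$ (absorbing $a_3$ into the specialized tuples), apply the induction hypothesis to the strongly good specialization $(a_1,a_2)\to(a_1',a_2')$ to get $b^{*}$ with $b^{*}\da_{(a_1,a_2)} c$ (in the extended sense) and the specialization $(a_1,a_2)b^{*}(a_3c)\to\cdots$, and then check that independence over $(a_1,a_2)$ together with $a_3\in\textrm{cl}(a_1)$ upgrades to independence over $a=(a_1,a_2,a_3)$ — this is a routine property of the independence calculus, since adding algebraic/bounded-closure elements to the base does not affect independence. Finally, the rank-$1$ hypothesis $\textrm{rk}(a\to a')\le 1$ transfers to $\textrm{rk}((a_1,a_2)\to(a_1',a_2'))\le 1$ because $a_3\in\textrm{cl}(a_1)$ contributes nothing to dimension, i.e. $\textrm{dim}(a_1,a_2,a_3)=\textrm{dim}(a_1,a_2)$ and similarly on the primed side.

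I expect the main obstacle to be the base case, specifically making Lemma \ref{prop4.7} do exactly the work needed: Lemma \ref{prop4.7} as stated produces $b$ with a specified dimension and a specialization, but I need to also extract from it that $\textrm{t}^g(b^{*}/a)=\textrm{t}^g(b/a)$, which requires knowing that $b^{*}$ and $b$ are both generic over $a$ in the same irreducible set over $a$ and invoking uniqueness of Galois type of generics (Lemma \ref{samegaloistype}) in its relativized form. The subtlety is that $C$ — the locus of $(a,b)$ — is defined over parameters including $a$ (or $\textrm{cl}(a)$), not over $\emptyset$, so one must either work with the monster over the base $a$ and quote the relativized version of (3), or argue directly via the independence calculus from \cite{lisuriart} that $b^{*}\da_a c$ and $\textrm{t}^g(b^{*}/a)=\textrm{t}^g(b/a)$ together follow from $\textrm{dim}(b^{*}/ac)=\textrm{dim}(b/a)=\textrm{dim}(C(a))$ plus the fact that the generic Galois type over $a$ of the fiber $C(a)$ is unique. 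I would handle this by passing to the relativized setting and noting that everything proved for $\emptyset$ in Sections 2--4 relativizes. The inductive step, by contrast, should be essentially bookkeeping once the base case is secure.
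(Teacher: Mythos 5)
Your proposal follows the paper's proof closely in its overall architecture: induct on the recursive definition of ``strongly good,'' handle the strongly regular base case via Lemma~\ref{prop4.7}, and reduce the inductive step to the amalgamation problem over $(a_1,a_2)$. The base case is essentially identical to the paper's. Your concern about needing a ``relativized'' version of Lemma~\ref{samegaloistype} is actually unnecessary: in the paper, $C$ is taken to be the locus of $(a,b)$ over $\emptyset$ (recall constants for $\textrm{log}(\overline{\mathbb{Q}})$ have been added to the language), the lemma is applied over $\emptyset$ to conclude $t^g(ab/\emptyset)=t^g(ab^*/\emptyset)$, and the statement over $a$ is then an immediate consequence. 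One does need to check that $(a,b^*)$ is generic on $C$, but that follows from $\dim(b^*/a)=\dim(C(a))$ together with $\dim(ab^*)=\dim(a)+\dim(b^*/a)$.

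The genuine gap is in the inductive step. When you absorb $a_3$ into the tuples and apply the induction hypothesis to $(a_1,a_2)\to(a_1',a_2')$, the output is a pair $(a_3^*,b^{**})$ (your ``$b^*$'') with $t^g(a_3^*b^{**}/a_1a_2)=t^g(a_3b/a_1a_2)$, independence $a_3^*b^{**}\downarrow_{a_1a_2}a_3c$, and the specialization $a_1a_2a_3^*b^{**}a_3c\to a_1'a_2'a_3'b'a_3'c'$. The specialization and the independence statement restrict/transfer fine (restriction of a specialization to a subtuple is a specialization, and $a_3\in\textrm{bcl}(a_1)$ lets you move $a_3$ into the base). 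But the Galois type you obtain is $t^g(a_3^*b^{**}/a_1a_2)=t^g(a_3b/a_1a_2)$, which does \emph{not} yield $t^g(b^{**}/a_1a_2a_3)=t^g(b/a_1a_2a_3)$ unless $a_3^*=a_3$. You gesture at this (``$a_3$ is determined \dots essentially uniquely along the specialization'') but do not close it. The paper's proof handles exactly this point: $a_3,a_3^*\in\textrm{bcl}(a_1)$ and $a_1\to a_1'$ is an isomorphism, so the specialization $a_1a_3a_3^*\to a_1'a_3'a_3'$ (a restriction of the one above) has rank $0$ and is therefore an isomorphism; since the target identifies the two last coordinates, injectivity forces $a_3=a_3^*$. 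With that equality in hand, the Galois type, independence, and specialization claims over $a=(a_1,a_2,a_3)$ all follow. You should add this argument to complete the inductive step.
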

 
\begin{proof}
We prove the Lemma by induction, using Definition \ref{good}.
If $a \to a'$ is strongly regular, this follows from Lemma \ref{prop4.7}:
Let $D_1$ be the locus of $a$, $m=\textrm{lg}(b)$ and $C$ the locus of $(a,b)$ in $D_1 \times V^m$.
By Lemma \ref{prop4.7}, there is some $b^* \in C(a)$ such that $\textrm{dim}(b^*/ac)=\textrm{dim}(C(a))$ with $ab^*c \to a'c'b'$.
In particular, $\textrm{dim}(b^*/a)=\textrm{dim}(b/a)$, and $b^*$ is independent from $c$ over $a$.
Since $C$ is the locus of $(a,b)$, both $(a,b)$ and $(a,b^*)$ are generic on $C$.
By Lemma \ref{samegaloistype}, $t^g(ab/\emptyset)=t^g(ab^*/\emptyset)$, so in particular $\textrm{t}^g(b^*/a)=\textrm{t}^g(b/a)$.

Suppose now $a=(a_1, a_2,a_3)$, $a'=(a_1',a_2',a_3')$, $a \to a'$ are as in Definition \ref{good}, and the lemma holds for the specialization $(a_1, a_2) \to (a_1', a_2')$.
Amalgamating over $(a_1, a_2) \to (a_1', a_2')$, we see that there exist $b^*$, $a_3^*$ such that $\textrm{t}^g(b^* a_3^*/a_1 a_2)=\textrm{t}^g(ba_3/a_1 a_2)$, $b^*a_3^*$ is independent from $c$ over $a_1 a_2$, and $a_1 a_2 a_3^*b^*a_3 c \to a_1' a_2' a_3' b' a_3' c'$.
Now, by Definition \ref{good}, $a_3 \in \textrm{bcl}(a_1)$, and as $\textrm{t}^g(b^* a_3^*/a_1 a_2)=\textrm{t}^g(ba_3/a_1 a_2)$, also $a_3^* \in \textrm{bcl}(a_1)$.
By Definition \ref{good}, $a_1 \to a_1'$ is an isomorphism.
From these facts together it follows that $a_1a_3a_3^* \to a_1' a_3' a_3'$ is of rank $0$ and thus an isomorphism.
Hence, $a_3=a_3^*$, so we get $ab^*c \to a'b'c'$ as wanted.
\end{proof}

For (8) we need the concept of Morley and strongly indiscernible sequences.

\begin{definition}
Let $\A$ be a model.
We say a sequence $(a_{i})_{i<\a}$ is \emph{Morley} over $\mathcal{A}$,  
if for all $i<\a$, $t^g(a_{i}/\A )=t^g(a_{0}/\A )$ and
$a_{i}\da_{\A}\cup_{j<i}a_{j}$.
 \end{definition}
 
\begin{definition}
We say that a sequence $(a_{i})_{i<\alpha}$ is \emph{strongly indiscernible} over $A$ if for all
cardinals $\kappa>\alpha$, there are $a_{i}$, $\alpha \le i<\kappa$, such that every permutation of the sequence
$(a_{i})_{i<\kappa}$ extends to an automorphism $f$ of $V$ such that $f(a)=a$ for every $a \in A$.
\end{definition}

We need the following facts about Morley sequences.

\begin{lemma}\label{morleyindisc}
If $\A$ is a countable model and the sequence $(a_{i})_{i<\alpha}$ is Morley over  $\A$, then it is independent over $\A$ and strongly indiscernible over $\A$.
\end{lemma}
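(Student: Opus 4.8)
The plan is to show the two conclusions separately, both following from the general independence calculus for quasiminimal classes developed in \cite{lisuriart}, section 2.

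First I would address independence. By definition of a Morley sequence, $a_i \da_\A \bigcup_{j<i} a_j$ for every $i<\alpha$. Independence over $\A$ of the whole sequence means that for each $i$, $a_i \da_\A \bigcup_{j \neq i} a_j$, or equivalently (using the symmetry and finite character of $\da$ from the independence calculus) that any finite subtuple is independent from its complement over $\A$. This is a standard consequence of the ``built-in'' transitivity and monotonicity axioms for non-forking: one argues by induction on finite subsets $I \subset \alpha$ that $(a_i)_{i \in I}$ is independent over $\A$, using at each step that appending a new index $i$ (taken to be the largest) preserves independence because $a_i \da_\A \bigcup_{j<i} a_j \supseteq \bigcup_{j \in I, j<i} a_j$ together with monotonicity and transitivity. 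Since independence of a sequence is witnessed by its finite subsequences (finite character), this suffices.

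Next I would handle strong indiscernibility. The key observation is that, over a countable model $\A$, the Galois type of $a_i$ over $\A \cup \bigcup_{j<i} a_j$ is the unique non-forking extension of $t^g(a_0/\A)$ — here I use (QM4), uniqueness of the generic type, which forces all nonalgebraic one-types over a countable closed set to coincide, combined with the stationarity of non-forking that comes with the independence calculus. From this, one shows by a back-and-forth / induction argument that for any two finite increasing tuples $(a_{i_1}, \ldots, a_{i_k})$ and $(a_{j_1}, \ldots, a_{j_k})$ from a Morley sequence (possibly an extension of the given one to any larger cardinal $\kappa$, which exists by the extension property of non-forking together with (QM4)), we have $t^g(a_{i_1} \cdots a_{i_k}/\A) = t^g(a_{j_1} \cdots a_{j_k}/\A)$; indiscernibility in the ordinary sense follows. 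To upgrade to \emph{strong} indiscernibility, one must produce, for each cardinal $\kappa > \alpha$, an extension $(a_i)_{i<\kappa}$ such that every permutation extends to an automorphism fixing $\A$. The extension is built by transfinite recursion using the extension axiom for non-forking; that every permutation of the extended sequence lifts to an automorphism follows because any two finite subtuples of the same length have the same Galois type over $\A$ (by the indiscernibility just established, which does not depend on the order of the index set once all elements realize the non-forking extension), and the monster model is homogeneous enough — via (QM5) and Remark \ref{le31} — for such partial elementary maps to extend to automorphisms.

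The main obstacle I anticipate is the careful bookkeeping in the strong indiscernibility part: one needs that \emph{all} finite subtuples of the (possibly reordered, possibly extended) sequence realize the same Galois type over $\A$, which requires knowing that the non-forking extension of the generic type over a countable model is not merely unique but behaves well under arbitrary reindexing — i.e. that a Morley sequence is in fact totally indiscernible, not just order-indiscernible. This is where countability of $\A$ and (QM4) are essential, since they collapse the relevant type space enough that ``independent and based on the generic type'' already pins down the full type of any finite subset regardless of enumeration. Once that is in hand, the automorphism-extension step is routine homogeneity of the monster model.
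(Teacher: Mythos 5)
Your proof reconstructs from scratch what the paper simply delegates to the independence calculus of \cite{lisuriart}: the paper's actual proof is a three-line citation chain (first invoking Lemma 2.38 there to reconcile the two slightly different formulations of ``Morley sequence,'' then applying Lemmas 2.25 and 2.26 of \cite{lisuriart} directly). So in substance your argument and the paper's are the same — both ultimately rest on the axioms of non-forking for quasiminimal AECs (finite character, monotonicity, transitivity, extension, stationarity) plus (QM4) and homogeneity of the monster — but you are writing out the content of the cited lemmas rather than invoking them, and you skip the definitional-equivalence step because you work directly from the definition as stated here. Your outline is correct, and the one place that needs the care you flag (total, not merely order, indiscernibility of a Morley sequence over a model, which is what makes the permutation-to-automorphism step go through) is exactly the point that stationarity over the countable model $\A$ together with (QM4) handles; that is also the technical heart of the cited Lemma 2.26.
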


\begin{proof}
By Lemma 2.38 in \cite{lisuriart} (Lemma 2.43 in \cite{lisuri}), our definition of a Morley sequence is equivalent with Definition 2.23 in \cite{lisuriart} (Definition 2.30 in \cite{lisuri}).
The result then follows from Lemmas 2.25  and 2.26 in \cite{lisuriart} (Lemmas 2.32 and 2.33 in \cite{lisuri}), applying Lemma 2.38 in \cite{lisuriart} (Lemma 2.43 in \cite{lisuri}) again.
\end{proof}
 
\begin{lemma}\label{morleyloytyy}
Let $A$ be a finite set and $\kappa$ a cardinal such that $\kappa=\textrm{cf}(\kappa)>\omega$.
For every sequence $(a_i)_{i<\kappa}$, there is some countable model $\A \supset A$ and some
$X \subset \kappa$ cofinal so that $(a_i)_{i \in X}$ is Morley over $\A$.
\end{lemma}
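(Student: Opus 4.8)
The plan is to extract the Morley subsequence in two moves: first, via a pressing‑down argument, pin down a fixed \emph{finite} base over which cofinally many $a_i$ behave well; then thicken that base to a countable submodel without destroying non‑forking. I will use the following properties of the non‑forking calculus of \cite{lisuriart} (available since the theory of covers is superstable): finite local character (for a finite tuple $a$, every $t^g(a/B)$ does not fork over some finite subset of $B$); monotonicity, symmetry and transitivity; and preservation of non‑forking when the base $C$ is enlarged to $\textrm{bcl}(C)$. I will also use that over a countable closed set $H$ there are only countably many Galois types of finite tuples: by Remark \ref{le31} such a type is determined by its quantifier‑free type, and coordinate by coordinate there are only countably many options, since each coordinate is either one of the countably many points of the $\textrm{cl}$ of the previous coordinates or, by (QM4), realizes the unique generic type over them.

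\medskip

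For each limit ordinal $i<\kappa$ choose, by finite local character, a finite $u_i\subseteq i$ with $t^g(a_i/A\cup\{a_j:j<i\})$ non‑forking over $A\cup\{a_j:j\in u_i\}$. Since $u_i$ is finite and $i$ is a limit, $\max(u_i\cup\{0\})<i$, so $i\mapsto\max(u_i\cup\{0\})$ is regressive on the (club, hence stationary) set of limit ordinals below $\kappa$; by Fodor's lemma it is constant, say equal to $\gamma^*$, on a stationary $S_0$. Each $u_i$ with $i\in S_0$ is then a finite subset of $\gamma^*+1$, and since $\kappa$ is regular and $>\omega$ there are fewer than $\kappa$ such subsets, so a single finite $w^*$ has $S_1:=\{i\in S_0:u_i=w^*\}$ cofinal in $\kappa$. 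Put $b^*:=A\cup\{a_j:j\in w^*\}$ and $\A:=\textrm{bcl}(b^*)$. By Lemma \ref{bcl}, $\A=\textrm{log}(\textrm{acl}(\textrm{exp}(b^*)))$ is the $\textrm{exp}$‑preimage of an algebraically closed subfield of $F$, hence the universe of a submodel of $V$; it is countable by (QM3), and $A\subseteq b^*\subseteq\A$. For $i\in S_1$ we have $a_i\da_{b^*}\{a_j:j<i\}$, and enlarging the base to $\textrm{bcl}(b^*)=\A$ yields $a_i\da_{\A}\{a_j:j<i\}$. Since the $a_i$ with $i\in S_1$ realize only countably many Galois types over the countable closed set $\A$, there is a cofinal $X\subseteq S_1$ on which $t^g(a_i/\A)$ is constant. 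Then for $i\in X$ we have $t^g(a_i/\A)=t^g(a_{\min X}/\A)$ and, by monotonicity, $a_i\da_{\A}\{a_{i'}:i'\in X,\ i'<i\}$, so $(a_i)_{i\in X}$ is a Morley sequence over the countable model $\A\supseteq A$.

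\medskip

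The delicate point is that the base is required to be a countable \emph{model}: finite local character only delivers a finite base, and neither the pressing‑down step nor the pigeonhole over Galois types repairs this. The observation that handles it is particular to the cover: the bounded closure of a finite set is $\textrm{log}$ of the algebraic closure of a finitely generated subfield, hence already the universe of a submodel, and it is countable by the countable closure property, while non‑forking survives the passage from a set to its bounded closure. (For $\kappa=\omega_1$ one could alternatively build a cofinal club of countable elementary submodels of some $H(\chi)$ and read off $\A$ and $X$ from it, but the argument above is uniform in the regular uncountable cardinal $\kappa$.)
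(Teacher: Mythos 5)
Your proof is correct. Where the paper disposes of this lemma by citing Lemma~2.24/2.38 of \cite{lisuriart} (equivalently Lemmas~2.31/2.43 of \cite{lisuri}) and then shrinking to a countable submodel, you give a self-contained reconstruction of exactly the argument those citations encapsulate: finite local character of $\da$ to get a finite base $u_i$ for each $t^g(a_i/A\cup\{a_j:j<i\})$, Fodor's lemma on the regressive map $i\mapsto\max(u_i\cup\{0\})$ plus pigeonhole to stabilise a single finite $w^*$ on a cofinal $S_1$, passage from $b^*=A\cup\{a_j:j\in w^*\}$ to $\A=\textrm{bcl}(b^*)$ with preservation of independence, and then $\omega$-stability over the countable closed $\A$ to extract a cofinal $X\subseteq S_1$ with constant $t^g(a_i/\A)$. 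The chain of properties you invoke (finite local character, base monotonicity up to $\textrm{bcl}$, monotonicity, symmetry) is exactly what the independence calculus of \cite{lisuriart}, Section~2 provides, and the counting of Galois types over $\A$ via (QM3), (QM4), (QM5) and Remark~\ref{le31} is the standard $\omega$-stability-over-models fact for quasiminimal classes. So the two routes differ only in presentation: the paper leans on the abstract AEC machinery as a black box, while you unpack it; your version has the small pedagogical advantage of making the regularity hypothesis $\kappa=\textrm{cf}(\kappa)>\omega$ visibly earn its keep in the Fodor and pigeonhole steps.

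One small correction to a remark you make at the end: you describe the observation ``$\textrm{bcl}$ of a finite set is a countable submodel'' as particular to the cover. In fact this is a general feature of quasiminimal pregeometry structures --- closed subsets are submodels in the associated AEC (\cite{monet}), and (QM3) gives countability --- and that generality is precisely why the paper can afford to just cite \cite{lisuriart} here. Your concrete verification via $\textrm{log}\circ\textrm{acl}\circ\textrm{exp}$ and Lemma~\ref{bcl} is a fine sanity check, but the cover plays no special role in this step.
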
  

\begin{proof}
By Lemma 2.38 in \cite{lisuriart} (Lemma 2.43 in \cite{lisuri}), our definition of a Morley sequence is equivalent with Definition 2.23 in \cite{lisuriart} (Definition 2.30 in \cite{lisuri}).
The result then follows from Lemma 2.24 in \cite{lisuriart} (Lemma 2.31 in \cite{lisuri}), since we may always choose the model $\A$ obtained to be countable (just take a suitable submodel).
\end{proof} 
 
\begin{lemma}\label{tyyppilemma}
Let $(a_i)_{i<\o_1}$ and $(b_i)_{i<\o_1}$ be two sequences independent and strongly indiscernible over $A$.
Suppose $t^g(a_0/A)=t^g(b_0/A)$.
Then, for any $i_1 < \cdots <i_n <\o_1$ and $j_1< \cdots < j_n<\o_1$, we have
$$t^g(a_{i_1}, \ldots, a_{i_n}/A)=t^g(b_{j_1}, \ldots, b_{j_n}/A).$$
\end{lemma}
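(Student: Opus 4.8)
The statement says two independent, strongly indiscernible sequences with the same starting Galois type agree on all finite‐tuple Galois types. The natural strategy is to first reduce to showing that each of the two sequences is itself "indiscernible in Galois type" in the strong sense — i.e. that $t^g(a_{i_1},\ldots,a_{i_n}/A)$ depends only on $n$, not on the choice of indices $i_1<\cdots<i_n$ — and then to connect the two sequences via a single comparison of, say, the first $n$ terms. The first half is exactly what strong indiscernibility buys us: by definition, any permutation of an extended sequence $(a_i)_{i<\kappa}$ extends to an automorphism fixing $A$, and in particular an order-preserving bijection sending $\{i_1,\ldots,i_n\}$ to $\{1,\ldots,n\}$ (after first extending past $\omega_1$ so that there is room) extends to such an automorphism; hence $t^g(a_{i_1},\ldots,a_{i_n}/A)=t^g(a_1,\ldots,a_n/A)$, and likewise for $b$. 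So it remains to prove the single equality $t^g(a_1,\ldots,a_n/A)=t^g(b_1,\ldots,b_n/A)$ for each fixed $n$.

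**The inductive step.** The plan for the remaining equality is induction on $n$, using independence together with uniqueness of the generic type over models. The base case $n=1$ is the hypothesis $t^g(a_0/A)=t^g(b_0/A)$ combined with strong indiscernibility (any single $a_i$ has the same type as $a_0$). For the inductive step, suppose $t^g(a_1,\ldots,a_{n-1}/A)=t^g(b_1,\ldots,b_{n-1}/A)$, witnessed by an automorphism $f$ fixing $A$ with $f(a_i)=b_i$ for $i<n$. Apply $f$ to $a_n$: we get an element $f(a_n)$ with $a_n\downarrow_A \{a_1,\ldots,a_{n-1}\}$ transported to $f(a_n)\downarrow_A\{b_1,\ldots,b_{n-1}\}$, and with $t^g(f(a_n)/A)=t^g(a_n/A)=t^g(a_0/A)=t^g(b_0/A)=t^g(b_n/A)$. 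So both $f(a_n)$ and $b_n$ realize the same Galois type over $A$ and are each independent from $\{b_1,\ldots,b_{n-1}\}$ over $A$. I would then invoke the stationarity/uniqueness of the non-forking extension of a type over a (countable) model — the independence calculus for $\K(V)$ from \cite{lisuriart}, section 2 — to conclude that $f(a_n)$ and $b_n$ have the same Galois type over $A\cup\{b_1,\ldots,b_{n-1}\}$, hence there is a further automorphism fixing $A\cup\{b_1,\ldots,b_{n-1}\}$ sending $f(a_n)$ to $b_n$. Composing, $t^g(a_1,\ldots,a_n/A)=t^g(b_1,\ldots,b_n/A)$.

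**Main obstacle.** The delicate point is the appeal to uniqueness of the non-forking extension: the independence calculus of \cite{lisuriart} gives stationarity of Galois types over models, but $A$ here is only assumed to be a set, not a model, and $A\cup\{b_1,\ldots,b_{n-1}\}$ is certainly not one. The way around this — and I expect this is how the authors handle it — is to note (via Lemma \ref{morleyloytyy} or directly) that a strongly indiscernible sequence over $A$ is, after replacing $A$ by a countable model $\A\supseteq A$ over which the sequences remain Morley, governed by genericity over $\A$; alternatively, one reduces the whole statement to the Morley-sequence case, where over a countable model $\A$ the relevant types are all the generic type and QM4 (uniqueness of the generic type) applies directly. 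Either way, the technical heart is bootstrapping from "same type over $A$" to "same type over $A$ together with an independent initial segment," and the cleanest route is to push everything to a countable model and use that over a model, an independent realization of the generic type is unique up to automorphism. The rest is the essentially formal combinatorics of extending partial maps to automorphisms, which strong indiscernibility makes routine.
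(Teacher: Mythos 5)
Your approach---reduce by strong indiscernibility to comparing $(a_1,\ldots,a_n)$ with $(b_1,\ldots,b_n)$, then induct on $n$ using independence together with uniqueness of free extensions---is exactly the standard first-order argument, and that is what the paper itself invokes: its own proof is a single sentence deferring to \cite{lisuriart}, section 2. So your strategy matches, and you correctly identify the delicate point.

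The one genuine loose end is the resolution you sketch for that delicate point. Replacing $A$ by a countable model $\A\supseteq A$ does not transfer the hypothesis: an automorphism fixing $A$ and carrying $a_0$ to $b_0$ need not fix $\A$ pointwise, so $t^g(a_0/\A)=t^g(b_0/\A)$ is not automatic. Moreover Lemma \ref{morleyloytyy} produces, for each sequence separately, a countable model over which a cofinal subsequence is Morley, and there is no immediate reason the two models can be taken equal. A cleaner opening move is to apply the automorphism witnessing $t^g(a_0/A)=t^g(b_0/A)$ to the entire sequence $(a_i)$, reducing to the case $a_0=b_0$; but even then the inductive step invokes uniqueness of the free extension of $t^g(a_n/A)$ over the set $A\cup\{b_1,\ldots,b_{n-1}\}$, which is in general neither a model nor closed, so one must still appeal to the stationarity machinery of \cite{lisuriart}, section 2 (over suitable bounded-closed sets, or via QM4 in the singleton case once the realizations are known to lie outside the closure of the base). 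That is precisely what the paper's one-line proof defers to wholesale, so your diagnosis of where the real work lies is correct, but as written the proposal leaves the same hole the paper does rather than closing it.
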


\begin{proof}
As in the first-order case, using results from \cite{lisuriart}, section 2 (see also \cite{lisuri}, chapter 2).
\end{proof} 
   
We still need the following lemma that will be applied in the proofs for (8) and (9).

\begin{lemma}\label{ylimalli}
Let $\mathfrak{V}$ be a saturated elementary extension of $V$, let $K$ be the kernel of exp in $V$ and $K^*$ the kernel of exp in $\mathfrak{V}$. 
If $X \subset \mathfrak{V}$ is a set such that $\textrm{span}(X) \cap K^* \subseteq K$, then
there is a set $X_V \subseteq V$ isomorphic to $X$.
\end{lemma}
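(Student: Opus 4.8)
The plan is to construct the isomorphic copy $X_V$ inside $V$ by realizing, inside $V$, the quantifier-free (equivalently, by Remark~\ref{le31}, Galois) type that $X$ has over $\emptyset$ in $\mathfrak{V}$. The first step is to understand what data the type of a tuple from $X$ carries: by Lemma~\ref{tyypit}, the quantifier-free type of a finite tuple $v$ with linearly independent coordinates is determined by which roots of varieties the tuples $\textrm{exp}(v/l)$ land in, by the non-vanishing conditions $\textrm{exp}(v)\notin Y$ for lower-dimensional $Y\subseteq W$, and crucially by the conditions $mv\neq 0$ for $m\in\mathbb{Z}^n\setminus\{0\}$, i.e.\ by the $\mathbb{Q}$-linear dependencies among the coordinates and by which $K$-cosets they hit. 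For a general tuple from $X$, we first split off a $\mathbb{Q}$-linearly independent sub-tuple and record the (rational) linear dependencies; the hypothesis $\textrm{span}(X)\cap K^*\subseteq K$ is precisely what guarantees these dependencies, when ``pushed down'' to $V$, still have solutions with kernel elements genuinely in $K$ rather than in the larger $K^*$.

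**Next I would** carry out the construction by a back-and-forth / union-of-chains argument. Enumerate $X=(x_\xi)_{\xi<\lambda}$. Build an increasing chain of partial isomorphisms from finite (or countable) subsets of $\textrm{span}(X)$ onto subsets of $V$, at each stage adding one more element $x_\xi$ to the domain. At a successor step we already have a partial isomorphism $g$ defined on $\textrm{span}(x_{\xi_1},\dots,x_{\xi_k})$ (closed under the $\mathbb{Q}$-vector space structure, so the domain is a $\mathbb{Q}$-subspace), and we must extend it to include $x_\xi$. There are two cases. If $x_\xi\in\textrm{span}(x_{\xi_1},\dots,x_{\xi_k})$, then $x_\xi=\sum q_i x_{\xi_i}$ for rationals $q_i$ and we are forced to set $g(x_\xi)=\sum q_i g(x_{\xi_i})$; we must check this is well-defined and still a partial isomorphism, and here is where $\textrm{span}(X)\cap K^*\subseteq K$ is used: any relation $\sum m_i x_{\xi_i}=0$ with $m_i\in\mathbb{Z}$, more precisely any kernel element arising as an integer combination, lies in $K$ by hypothesis, so it is preserved. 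If $x_\xi$ is $\mathbb{Q}$-linearly independent from the previous coordinates, we must find $w\in V$ realizing the right quantifier-free type over $g(x_{\xi_1},\dots,x_{\xi_k})$; this is possible because $V$ is a monster (saturated) model for the theory of covers, so every consistent quantifier-free type over a small set is realized, and the relevant type is consistent because it is realized by $x_\xi$ in $\mathfrak{V}$ and the cover theory is submodel complete (so quantifier-free types transfer between $V$ and its elementary extension $\mathfrak{V}$).

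**The point to be careful about** is that when we realize the type of a linearly-independent new coordinate $x_\xi$ over $g$ of the already-constructed part, the type in $\mathfrak{V}$ may ``want'' $x_\xi$ to satisfy a congruence condition modulo $K^*$ — e.g.\ $x_\xi$ could be forced to equal $\frac{1}{l}(\text{something})$ up to an element of $K^*$ — and we need the realizing element in $V$ to only ever involve elements of $K$. The hypothesis $\textrm{span}(X)\cap K^*\subseteq K$ says exactly that no element of $\textrm{span}(X)$ ``secretly uses'' the extra kernel of $\mathfrak{V}$: all the $K$-coset information encoded in the type of $X$ is $K$-coset information, not merely $K^*$-coset information. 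Formally, one observes that the quantifier-free type of any tuple from $X$ over $\emptyset$, as computed via Lemma~\ref{tyypit}, refers only to varieties over $\overline{\mathbb{Q}}$, to roots, and to the relations $mv\neq 0$; the assumption ensures that the partial assignment $x_i\mapsto$ (a realization in $V$) never needs to distinguish two elements of $V$ that differ by something in $K^*\setminus K$, because $\textrm{span}(X)$ contains no such difference. Thus every finite sub-type is realizable in $V$, the chain of partial isomorphisms goes through, and its union (after closing off) gives an isomorphism from $X$ onto a subset $X_V\subseteq V$.

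**The main obstacle** I anticipate is the bookkeeping at the linearly-independent step: one must write the quantifier-free type of the new element over the current finite domain explicitly enough (using the root-of-variety description from Lemma~\ref{tyypit}) to see that it transfers from $\mathfrak{V}$ to $V$ and uses no forbidden kernel element, and one must verify compatibility with the already-chosen images — in particular that adding the new coordinate does not retroactively force a kernel relation among the old ones that was not present. Handling this cleanly probably amounts to remarking that $\textrm{span}(X)$, as an abstract structure in the cover language (with its induced $K$ being $\textrm{span}(X)\cap K^*=\textrm{span}(X)\cap K\subseteq K$), is itself a ``partial cover'' whose kernel is a subgroup of $K$, hence embeds into any sufficiently saturated cover with kernel $\supseteq K$; the existence of $X_V$ then follows from saturation of $V$ together with submodel completeness of the theory of covers. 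I would phrase the proof along those lines, invoking quantifier-elimination / submodel completeness (cited in the introduction as \cite{ZilUusi},\cite{Zilber}) to reduce realizability of the type to consistency, which holds by witness in $\mathfrak{V}$.
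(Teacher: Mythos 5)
Your proposal takes a genuinely different route from the paper, and the crucial step in your route does not go through.

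\textbf{What the paper does.} The paper does not run a back-and-forth and does not realize types in $V$ one element at a time. Instead it works entirely inside $\mathfrak{V}$: starting from $X_0=\textrm{span}(X\cup\textrm{log}(\mathbb{Q}))$, it repeatedly picks some $d_0\in\textrm{acl}(\textrm{exp}(X_0))\setminus\textrm{exp}(X_0)$, adjoins a logarithm $x_0\in\mathfrak{V}$ of $d_0$, and verifies that the invariant $\textrm{span}(\cdot)\cap K^{*}\subseteq K$ is preserved (the hypothesis of the lemma is what makes the base case of this invariant hold, and the inductive step is a short computation with a rational $q\neq 0$). After transfinitely many steps it has $X'\supseteq X$ with $\textrm{exp}(X')$ an algebraically closed field and $X'\cap K^{*}=K$, i.e.\ a bona fide model of $T+(K\cong\mathbb{Z})$. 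It then invokes the \emph{uncountable categoricity} of that theory to conclude $X'$ is isomorphic to a submodel of $V$, and the restriction of this isomorphism to $X$ gives $X_V$. The nontrivial input replacing your appeal to type realization is precisely the Zilber--Bays categoricity theorem, cited at the outset of the paper.

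\textbf{The gap in your proposal.} At the linearly independent step of your back-and-forth you realize the quantifier-free type of $x_\xi$ over the current finite domain in $V$, and you justify this by writing that ``$V$ is a monster (saturated) model for the theory of covers, so every consistent quantifier-free type over a small set is realized.'' This is not correct. $V$ is the monster model for the \emph{quasiminimal class} $\mathcal{K}(V)$, which gives universality and model-homogeneity in the AEC sense, but it is not first-order saturated; if it were, its kernel would be a saturated elementary extension of $\mathbb{Z}$ rather than $\mathbb{Z}$ itself, which is exactly the property that distinguishes $\mathfrak{V}$ from $V$. Concretely, a coherent system of roots $\bigl(\textrm{exp}(x/l)\bigr)_{l\in\mathbb{N}}$ witnessed by $x\in\mathfrak{V}$ need not be witnessed by any element of $V$: above a fixed $a\in F^{*}$, the elements of $V$ hit only the countable $K$-orbit of one such system, while a saturated $\mathfrak{V}$ realizes a continuum of them. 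Thus the finite fragments of the type of $x_\xi$ you wish to realize may simply fail to be types of $V$, and the kernel hypothesis $\textrm{span}(X)\cap K^{*}\subseteq K$ by itself does not save you, because the problematic kernel element can live in $x_\xi-c$ for some $c\in V$ not in $\textrm{span}(X)$. Your last paragraph, which suggests viewing $\textrm{span}(X)$ as a ``partial cover'' with kernel inside $K$ and then embedding it, is on the right track; but you then again lean on ``saturation of $V$''. The missing idea is the one the paper supplies: complete the partial cover to a full model of $T+(K\cong\mathbb{Z})$ inside $\mathfrak{V}$, and then use categoricity rather than saturation to produce the isomorphic copy inside $V$.
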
   

\begin{proof}
Denote $\mathfrak{F}=\textrm{exp}(\mathfrak{V})$ and $X_0=\textrm{span}(X \cup \textrm{log}(\mathbb{Q}))$.
Let $T$ be the theory of covers.
We will enlarge $X_0$ to a set $X' \supset X_0$ such that  $X'$ is a model for the theory $T+(K\cong\mathbb{Z})$.
This theory is categorical,  
so $X'$ is isomorphic to some elementary submodel of our monster model $V$, and thus we can find a set isomorphic to $X \subseteq X'$ already in $V$.
 
We enlarge $X_0$ as follows. 
It is easy to see that $\textrm{exp}(X_0)$ is closed under multiplication and inversion and that if for some $a \in \mathfrak{F}$, $a^{\frac{m}{n}} \in \textrm{exp}(X_0)$ for some choice of the $n$:th root $a^\frac{1}{n}$, where $\frac{m}{n} \in \mathbb{Q}$, then $a \in \textrm{exp}(X_0)$. 
Let $A=\textrm{acl}(\textrm{exp}(X_0))$.
Pick some $d_0 \in A \setminus \textrm{exp}(X_0)$, and let $x_0 \in \mathfrak{V}$ be such that $\textrm{exp}(x_0)=d_0$.
Denote $X_1=\textrm{span}(X_0 \cup \{x_0\})$.
We claim that $X_1 \cap K^{*} \subseteq K$.
Suppose not.
Let $Y \subset X_0$ be a finite set such that $\sum_{v \in Y} q_v v+q x_0 \in K^{*} \setminus K$ for some 
$q_v, q \in \mathbb{Q}$.
Since $X_0 \cap K^*=K$, we must have $q \neq 0$.
This gives us  
$$d_0^q=\prod_{v \in Y} (\textrm{exp}(v))^{-q_v}$$ (for some suitable choices of the roots in question).
But now $d_0 \in \textrm{exp}(X_0)$ which is against our assumptions.  

Repeating the process (taking unions at limit steps), we eventually get a set $X' \subset \mathfrak{V}$ such that $\textrm{exp}(X')=A$ and $X' \cap K^{*}=K$.
\end{proof}
   
We are now ready to prove (8).   
   
\begin{theorem}\label{infamalg}
Let $(a_i:i\in I)$ be independent and strongly indiscernible over $b$, with $I$ infinite. 
Suppose $(a_i':i\in I)$ is strongly indiscernible over $b'$, and $a_ib \to a_i'b'$ for each $i \in I$.
Further suppose $\textrm{rk}(b \to b') \le 1$.
Then, $(ba_i:i \in I) \to (b'a_i':i\in I)$.
\end{theorem}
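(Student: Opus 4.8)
The plan is to reduce the statement to finite index sets and then induct, using axiom (7) (Theorem \ref{amalg}) at each step. Since a specialization with respect to $\mathcal{C}$ is a finitary condition — it refers only to finite tuples of elements and to the sets of $\mathcal{C}$ containing them — it suffices to prove $(b,a_{i_1},\ldots,a_{i_n})\to(b',a_{i_1}',\ldots,a_{i_n}')$ for every $n$ and every $i_1<\cdots<i_n$ in $I$. By strong indiscernibility of $(a_i)_{i\in I}$ over $b$ and of $(a_i')_{i\in I}$ over $b'$, any two such $n$-subtuples are conjugate over $b$ (resp.\ over $b'$) by an automorphism of $V$; since every $C\in\mathcal{C}$ is Galois definable over $\emptyset$ (axiom (1)) and hence automorphism invariant, the truth of the displayed specialization does not depend on the chosen indices. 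So it is enough to show $(b,a_1,\ldots,a_n)\to(b',a_1',\ldots,a_n')$ for all $n\ge 1$, which I would prove by induction on $n$.

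The case $n=1$ is a hypothesis. For the inductive step, assume $(b,a_1,\ldots,a_{n-1})\to(b',a_1',\ldots,a_{n-1}')$, and recall $(b,a_n)\to(b',a_n')$ from the case $n=1$ together with indiscernibility. Apply Theorem \ref{amalg} to the base specialization $b\to b'$ (strongly good, with $\textrm{rk}(b\to b')\le 1$), with the two specializations $(b,a_1,\ldots,a_{n-1})\to(b',a_1',\ldots,a_{n-1}')$ and $(b,a_n)\to(b',a_n')$. This produces a tuple $(a_1^{*},\ldots,a_{n-1}^{*})$, independent from $a_n$ over $b$, with $\textrm{t}^g(a_1^{*}\cdots a_{n-1}^{*}/b)=\textrm{t}^g(a_1\cdots a_{n-1}/b)$ and $(b,a_1^{*},\ldots,a_{n-1}^{*},a_n)\to(b',a_1',\ldots,a_{n-1}',a_n')$. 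If there is an automorphism of $V$ fixing $b$ and $a_n$ and carrying $a_j^{*}$ to $a_j$ for $j<n$, applying it to this last specialization (and using automorphism invariance of $\mathcal{C}$) yields $(b,a_1,\ldots,a_n)\to(b',a_1',\ldots,a_n')$, finishing the induction.

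So the heart of the matter is the identity $\textrm{t}^g(a_1^{*}\cdots a_{n-1}^{*}/b\,a_n)=\textrm{t}^g(a_1\cdots a_{n-1}/b\,a_n)$. Both tuples realize $\textrm{t}^g(a_1\cdots a_{n-1}/b)$, and both are independent from $a_n$ over $b$ (the unstarred one because $(a_1,\ldots,a_n)$ is independent over $b$); so what is needed is stationarity of $\textrm{t}^g(a_1\cdots a_{n-1}/b)$, i.e.\ uniqueness of its nonforking extension to $b\,a_n$. In the independence calculus of \cite{lisuriart} stationarity is available over closed sets, so I would first pass to the base $\textrm{bcl}(b)$: move to a saturated elementary extension $\mathfrak{V}\succ V$, extend the specialization $b\to b'$ to $\textrm{bcl}(b)\to\textrm{bcl}(b')$ (using that specializations of algebraically closed fields extend over algebraic closures, via Remark \ref{ranksama}), run the amalgamation argument above over $\textrm{bcl}(b)$, and pull the resulting configuration back into $V$ by Lemma \ref{ylimalli} — whose span hypothesis holds because the elements manufactured are bounded over $b$, so their $\mathbb{Q}$-span meets the kernel of $\mathfrak{V}$ only in $K$. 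For handling the strongly indiscernible sequences over the closure, Lemmas \ref{morleyindisc}, \ref{morleyloytyy} and \ref{tyyppilemma} supply what is required.

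I expect the main obstacle to be exactly this last reduction: extending the specialization $a_ib\to a_i'b'$, in a controlled way, to a specialization over $\textrm{bcl}(b)$ while keeping the sequence $(a_i)$ independent and strongly indiscernible over $\textrm{bcl}(b)$, so that stationarity can be invoked; once that is set up, the induction on the size of the finite subtuple together with Theorem \ref{amalg} is routine.
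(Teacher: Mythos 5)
Your reduction to finite initial segments is valid, and you correctly identify the obstacle: after applying Theorem \ref{amalg} you get $(a_1^*,\ldots,a_{n-1}^*)$ with the same Galois type as $(a_1,\ldots,a_{n-1})$ over $b$ and independent from $a_n$ over $b$, but upgrading this to the same Galois type over $ba_n$ requires stationarity of $t^g(a_1\cdots a_{n-1}/b)$, which the independence calculus of \cite{lisuriart} guarantees only over closed (countable) sets, not over a finite tuple $b$. The difficulty you flag at the end is exactly where the proposal breaks, and the suggested patch does not close it.

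The proposed fix --- pass to $\textrm{bcl}(b)$, extend the specialization $b\to b'$ to $\textrm{bcl}(b)\to\textrm{bcl}(b')$, and ``run the amalgamation over $\textrm{bcl}(b)$'' --- faces several real obstacles. First, a specialization on the cover, unlike a place on a field, is not known to extend over $\textrm{bcl}$: each element of $\textrm{bcl}(b)$ carries a full division system inside $V$, and nothing in the hypotheses tells you where those roots should be sent; Remark \ref{ranksama} concerns definability over $\textrm{acl}$ in the field, not extension of specializations on the cover. Second, even if the specialization extended, strong indiscernibility of $(a_i)$ over $b$ does not automatically yield strong indiscernibility over $\textrm{bcl}(b)$, which you would need in order to keep reducing to initial segments over the enlarged base. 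Third, Theorem \ref{amalg} (axiom (7)) is established for the quasiminimal monster $V$, not for a saturated elementary extension $\mathfrak{V}$, which is not a model of $K\cong\mathbb{Z}$; you cannot amalgamate inside $\mathfrak{V}$ and only afterwards invoke Lemma \ref{ylimalli}.

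The paper resolves this differently, and the mechanism is worth noting because it is precisely designed to avoid appealing to stationarity over $b$. It first uses Theorem \ref{amalg} and induction to produce an $\omega$-sequence $(a_i^*)_{i<\omega}$ in $V$, independent over $b$, with each \emph{singleton} $a_i^*$ realizing $t^g(a_0/b)$ and with $b(a_i^*)_{i<\omega}\to b'(a_i')_{i<\omega}$ --- but makes no claim that the finite tuples $(a_{i_1}^*,\ldots,a_{i_n}^*)$ realize the right Galois type. It then passes to the field, uses $\omega$-stability of ACF to extract an infinite field-indiscernible subsequence $(\textrm{exp}(a_i^*))_{i\in I_0}$, encodes the whole target configuration in a first-order theory $\theta$, realizes $\theta$ by Compactness in a saturated $\mathfrak{V}$, and transfers the realization back into $V$ by Lemma \ref{ylimalli}. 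Only then, with a genuine $\omega_1$-sequence $(c_i)$ inside $V$, does it invoke Lemma \ref{morleyloytyy} to extract a cofinal Morley subsequence over a \emph{countable model} $\mathcal{A}\ni b$, for which Lemma \ref{morleyindisc} gives independence and strong indiscernibility; Lemma \ref{tyyppilemma} then forces the finite-tuple Galois types over $b$ to agree with those of the original sequence. Thus the passage to an $\omega_1$-sequence and then to a Morley subsequence over a small model is how the paper manufactures the ``stationarity over a closed set'' that your finite induction lacks; that step cannot be replaced by extending the specialization to $\textrm{bcl}(b)$.
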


\begin{proof} 
Since the sequences are strongly indiscernible, we may without loss assume that $I=\omega_1$.
We will find a sequence $(c_i)_{i<\omega_1}$, such that $(bc_i : i < \omega_1) \to (b'a_i': i < \omega_1)$ and $t^g(a_{i1}, \ldots, a_{i_n}/b)=t^g(c_{i1}, \ldots, c_{i_n}/b)$ for all $i_1, \ldots, i_n <\omega_1$, and the claim will follow.

By Theorem \ref{amalg} and induction, there exist elements $a_i^*$, $i<\omega$, independent over $b$ such that $\textrm{t}^g(a_i^*/b)=\textrm{t}^g(a_i/b)$ and $b(a_i^*)_{i<\omega} \to b'(a_i')_{i<\omega}$.
Then, also $(\textrm{exp}(a_i^{*}))_{i <\omega}$ are independent over $\textrm{exp}(b)$.
Moreover, we have for all $i,j \in \omega$,  that $\textrm{tp}(\textrm{exp}(a_i^*)/\textrm{exp}(b))=\textrm{tp}(\textrm{exp}(a_j^*)/\textrm{exp}(b))$ (for complete first-order types).
Since algebraically closed fields are $\omega$-stable, there are only finitely many free extensions for each complete type.
Thus, there is an infinite $I_0  \subset \omega$ so that $(\textrm{exp}(a_i^{*}))_{i \in I_0}$ are indiscernible (in the field language) over $\textrm{exp}(b)$.
 
Let $\theta$ be a theory consisting of the first-order formulae that express the following (note that we have added the elements of $\textrm{cl}(\emptyset)=\textrm{log }(\overline{\mathbb{Q}})$ to our language):
\begin{itemize}
\item The sequence $(\textrm{exp}(x_i))_{i<\omega_1}$ satisfies the same first-order formulae over $\overline{\mathbb{Q}} \cup \textrm{exp}(b)$ as the sequence $(\textrm{exp}(a_i^{*}))_{i \in I_0}$ (note that this is possible as the latter sequence is indiscernible over $ \textrm{exp}(b)$);
\item For each $i<\omega_1$, $x_i$ has the same complete first-order type over $\textrm{cl}(\emptyset) \cup \{b\}$ as $a_j^*$ for some (and thus every) $j \in I_0$, i.e. $\textrm{tp}(x_i/\textrm{cl}(\emptyset) b)=\textrm{tp}(a_j^*/\textrm{cl}(\emptyset) b)$;
\item For each $n$ and each positive, quantifier free first-order formula $\phi$ such that $\neg \phi(b', a_1', \ldots, a_n')$ holds, the theory $\theta$ contains the formula $\neg \phi(b, x_{j_1}, \ldots, x_{j_n})$ for all $j_1< \ldots <j_n<\omega_1$;
\item For any $n$ and any $i_1<\ldots<i_n<\omega_1$, it holds that if $\textrm{exp}(q_1 x_{i_1}+ \ldots + q_n x_{i_n}+c)=1$, where $q_1, \ldots, q_n \in \mathbb{Q}$ and $c$ is some linear combination of elements in $\textrm{log}(\textrm{exp}(\overline{\mathbb{Q}}))\cup b$, then 
$$q_1 x_{i_1}+ \ldots + q_n x_{i_n}+c=q_1 a_1^*+ \ldots + q_n a_n^*+c.$$ (Note that this can be expressed since $q_1 a_1^*+ \ldots + q_n a_n^*+c \in K$, $K \subset \textrm{log}(\textrm{exp}(\overline{\mathbb{Q}}))$, and $(\textrm{exp}(x_i))_{i<\omega_1}$ satisfies the same first-order formulae  as the sequence $(\textrm{exp}(a_i^{*}))_{i \in I_0}$.)
\end{itemize}
This theory is consistent as every finite fragment is realized by the sequence $(a_i^{*})_{i \in I_0}$.
Thus, in a saturated elementary extension $\mathfrak{V}$ of  $V$, we find a sequence $(c_i)_{i<\omega_1}$ realizing $\theta$
(note that since $\mathfrak{V}$ is saturated, it is not a model of  $T+(K=\mathbb{Z})$).

Denote by $K$ the kernel of exp in $V$ and by $K^{*}$ the kernel of exp in  $\mathfrak{V}$. 
Since the $c_i$ satisfy the theory $\theta$, we have $\textrm{span}(c_i)_{i<\omega_1} \cap K^* \subseteq K$.
By Lemma \ref{ylimalli}, we may assume that $c_i \in V$ for each $i<\omega_1$.
 
By locality of independence and the choice of the sequence $(c_i)_{i<\o_1}$, the sequence $(\textrm{exp}(c_i))_{i<\omega_1}$ is independent over $\textrm{exp}(b)$ since $(\textrm{exp}(a_i^{*}))_{i \in I_0}$ is.
As all dimensions can be calculated on the field, the sequence $(c_i)_{i<\omega_1}$ is independent over $b$.
By Lemmas \ref{morleyloytyy} and \ref{morleyindisc}, there is some uncountable $J \subset \omega_1$ and some countable model $\A$ such that $b \in \A$ and $(c_i)_{i \in J}$ is Morley and thus strongly indiscernible over $\A$.
Relabel the indices so that from now  on $(c_i)_{i<\omega_1}$ stands for $(c_i)_{i \in J}$.
Since $(a_i')_{i<\omega_1}$ is strongly indiscernible over $b'$, we have $(bc_i: i<\o_1) \to (b'a_i' : i<\o_1).$
We have chosen the $c_i$ so that the sequence is independent over $b$, and by Remark \ref{le31}, $\textrm{tp}(c_0/b)=\textrm{tp}(a_0^*/b)$ implies $t^g(c_0/b)=t^g(a_0^*/b)=t^g(a_0/b).$
Thus, by Lemma \ref{tyyppilemma}, $t^g(a_{i_1}, \ldots, a_{i_n}/b)=t^g(c_{i_1}, \ldots, c_{i_n}/b)$ for all $i_1, \ldots i_n \in \omega_1$, so we are done.
\end{proof}

For (9), we still need the following lemma.
The Zariski geometry version was presented originally in \cite{HrZi} (Lemma 4.13).  
 
\begin{lemma}\label{dimthmspec}
Let $a=(a_1, \ldots, a_n), a''=(a_1'', \ldots, a_n'') \in V^n$, $a \to a''$, and suppose $a_1 \neq a_2$, $a_1''=a_2''$.
Then there exists $a'=(a_1', \ldots, a_n') \in V^n$ such that $a_1'=a_2'$, $a \to a' \to a''$, and $\textrm{dim}(a)-\textrm{dim}(a')=1$.
\end{lemma}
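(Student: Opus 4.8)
The plan is to reduce this to the analogous statement on the field, which is Lemma 4.13 in \cite{HrZi}, and then lift the intermediate point back up to the cover using the structure we have developed. First I would apply exp to the given specialization: from $a \to a''$ we get $\textrm{exp}(a) \to \textrm{exp}(a'')$ on $F$, but here a subtlety arises, namely that $a_1 \ne a_2$ need not imply $\textrm{exp}(a_1) \ne \textrm{exp}(a_2)$. However, if $\textrm{exp}(a_1)=\textrm{exp}(a_2)$, then $a_1-a_2 \in K$, and since $K \cong \mathbb{Z}$ sits inside $\textrm{cl}(\emptyset)=\textrm{log}(\overline{\mathbb{Q}})$, we already have $a_1-a_2$ definable, so one can take $a'$ equal to $a$ with the first two coordinates ``merged'' along a linear (affine) relation of rank one; I would dispatch this degenerate case directly. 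So assume $\textrm{exp}(a_1) \ne \textrm{exp}(a_2)$, while $\textrm{exp}(a_1'')=\textrm{exp}(a_2'')$. Applying Lemma 4.13 of \cite{HrZi} to $\textrm{exp}(a) \to \textrm{exp}(a'')$ on the field, we obtain $e=(e_1,\ldots,e_n) \in F^n$ with $e_1=e_2$, $\textrm{exp}(a) \to e \to \textrm{exp}(a'')$ and $\textrm{dim}(\textrm{exp}(a))-\textrm{dim}(e)=1$.

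Next I would lift $e$ to the cover compatibly with the two specializations. The key point is that a field specialization $\textrm{exp}(a) \to e$ corresponds to a point $e$ lying in every variety through $\textrm{exp}(a)$, in particular in $\textrm{exp}(C)$ for $C$ the locus of $a$ over $\emptyset$; by Corollary \ref{olennainen} and Theorem \ref{komponentit}, the irreducible components of $\textrm{log}(\textrm{exp}(C))$ are PQF-closed and their images under exp are exactly the irreducible components of $\textrm{exp}(C)$, so some $a' \in V^n$ with $\textrm{exp}(a')=e$ lies in $C$, giving $a \to a'$. To also get $a' \to a''$, I would choose $a'$ more carefully: work with the loci and use that the specialization $a \to a''$ together with $\textrm{exp}(a) \to e \to \textrm{exp}(a'')$ means $e$ specializes to $\textrm{exp}(a'')$ through varieties compatible with $a''$; pulling back, the coset of $K^n$ over $e$ meeting the relevant components can be chosen so that $a' \to a''$ holds. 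Concretely, one picks a lift $a_1'$ of $e_1$, sets $a_2'=a_1'$ (legitimate since $e_1=e_2$), and lifts the remaining coordinates inside the appropriate components of $\textrm{log}(\textrm{exp}(C))$; the linear/torus part of $C$ (which is where the only ``ambiguity by $K^n$'' lives, by Lemma \ref{irredform} and Theorem \ref{komponentit}) can be matched to $a''$ because $a \to a''$ already fixes which coset is relevant.

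Finally, the dimension count: by Lemma \ref{dimensio} and Remark \ref{ranksama}, $\textrm{dim}(a')=\textrm{dim}(\textrm{exp}(a'))=\textrm{dim}(e)$ and $\textrm{dim}(a)=\textrm{dim}(\textrm{exp}(a))$, so $\textrm{dim}(a)-\textrm{dim}(a')=\textrm{dim}(\textrm{exp}(a))-\textrm{dim}(e)=1$, as required. The main obstacle I anticipate is the lifting step: ensuring that one can find a \emph{single} $a'$ over $e$ that simultaneously satisfies $a \to a'$ and $a' \to a''$, rather than just each separately. This requires bookkeeping of the $K^n$-cosets and the linear parts of the loci involved, using Remark \ref{linearintersect}, Lemma \ref{linearirre} and the canonical form of Lemma \ref{irredform} to show that the choice of roots and cosets forced by $a \to a''$ is consistent with a lift of the field-level intermediate point $e$. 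Everything else is a routine transfer between the cover and the field via the dictionary established in Sections 2 and 3.
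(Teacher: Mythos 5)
Your approach differs from the paper's: you propose to push down to the field, apply Lemma 4.13 of \cite{HrZi} there to obtain $e$ with $e_1=e_2$ and the right dimension drop, and then lift $e$ back to the cover. The paper instead works directly on the cover: it takes $D$ to be an irreducible component of $C\cap\Delta_{12}^n$ (where $C$ is the locus of $a$ and $\Delta_{12}^n$ is the diagonal) containing $a''$, applies the cover's own Dimension Theorem (Theorem \ref{dimthm}) to get $\textrm{dim}(D)=\textrm{dim}(C)-1$, uses Lemma \ref{dimthmlemma}, Corollary \ref{olennainen}, Theorem \ref{komponentit} and Remarks \ref{irredykskas}, \ref{kuntapqf} to show $D$ is PQF-closed over $\emptyset$, and then takes $a'$ to be a generic point of $D$. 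Since $D\subseteq C$ this immediately gives $a\to a'$, and since $a''\in D$ and $D$ is the locus of $a'$ this gives $a'\to a''$; the dimension count is built in. The cover-level construction thus delivers a single $a'$ satisfying both specializations automatically.

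There is a real gap in your proposal at precisely the place you flag as ``the main obstacle'': you do not actually produce a lift $a'$ of $e$ satisfying $a\to a'$ and $a'\to a''$ simultaneously. Showing $a\to a'$ is indeed equivalent to $a'\in C$, but $a'\to a''$ requires that the entire locus of $a'$ (over $\emptyset$, in the PQF sense) contains $a''$, and this locus depends not only on $\textrm{exp}(a')=e$ but on the $K^n$-coset of $a'$, the linear relations among its coordinates, and the choice of roots $\textrm{exp}(a'/l)$ for all $l$. A field specialization $e\to\textrm{exp}(a'')$ does not by itself control any of that. Resolving it would require redoing essentially the same component analysis of $C\cap\Delta_{12}^n$ that the paper carries out (and then noting that a generic point of the relevant component is such a lift), so the detour through the field buys nothing and leaves the hardest step unproved. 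Two smaller remarks: the ``degenerate case'' $\textrm{exp}(a_1)=\textrm{exp}(a_2)$ that you try to dispatch cannot actually occur, since then $a_1-a_2=k$ for some $0\neq k\in K\subset\textrm{log}(\overline{\mathbb{Q}})$, the equation $x_1=x_2+k$ is PQF-closed over $\emptyset$, and the specialization $a\to a''$ would force $a_1''=a_2''+k\neq a_2''$, contradicting the hypothesis; and the dimension transfer $\textrm{dim}(a')=\textrm{dim}(e)$ via Lemma \ref{dimensio} and Remark \ref{ranksama} is fine, but only once the correct $a'$ has been found.
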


\begin{proof}
Denote $\Delta_{12}^n=\{(v_1, \ldots, v_n) \in V^n \, | \, v_1=v_2\}$.
Let $C$ be the locus of $a$.
Then, $a''$ must lie on some irreducible component $D$ of $C \cap \Delta^n_{12}$.
By Theorem \ref{dimthm}, 
$\textrm{dim}(X) \ge \textrm{dim}(C)-1.$
Since $a_1 \neq a_2$, we have $C \cap \Delta^n_{12} \subsetneq C$, so $\textrm{dim}(D)= \textrm{dim}(C)-1.$
 
By Lemma \ref{dimthmlemma}, $\textrm{exp}(D)$ is an irreducible component of $\textrm{exp}(C) \cap \Delta^n_{12}$ (abusing notation, we use $ \Delta^n_{12}$ to denote the diagonal also on the field).
By Corollary \ref{olennainen}, $D$ is an irreducible component of $\textrm{log}(\textrm{exp}(C) \cap \Delta_{12}^n)$.
The variety $\textrm{exp}(C) \cap \Delta_{12}^n$ is definable over the empty set, and thus also $X$ is definable over the empty set as a variety.
By Remarks \ref{irredykskas} and \ref{kuntapqf} and Theorem \ref{komponentit}, $D$ is PQF-closed over the empty set.
 
Choose $a'$ to be a generic point of $D$.
Then $a'$ is as wanted. 
\end{proof} 

And finally, we prove (9):
 
\begin{theorem}\label{dimthmspec2}
 Let  $a_i, b_i \in V$ with $i < \kappa$, such that $a_0 \neq a_1$ and $b_0=b_1$.
Denote by $K$ the kernel of exp in $V$.
Suppose $(a_i)_{i<\kappa} \to (b_i)_{i <\kappa}$ is a specialization.
Assume there is some unbounded and directed $S \subset \mathcal{P}_{<\omega}(\kappa)$ satisfying the following conditions:
\begin{enumerate}[(i)]
\item  $0,1 \in X$ for all $X \in S$;
\item For all $X,Y \in S$ such that $X \subseteq Y$,  and for all sequences $(c_i)_{i \in Y}$ from $V$, the following holds: 
If $c_0=c_1$, $ (a_i)_{i \in Y} \to (c_i)_{i \in Y} \to (b_i)_{i \in Y},$
and $\textrm{rk}((a_i)_{i \in Y} \to (c_i)_{i \in Y}) \le 1$, then $\textrm{rk}((a_i)_{i \in X} \to (c_i)_{i \in X}) \le 1$.
\end{enumerate}

\vspace{3mm}  
Then, there are $(c_i)_{i<\kappa}$ such that   
$$(a_i)_{i \in \kappa} \to (c_i)_{i \in \kappa} \to (b_i)_{i \in \kappa},$$
$c_0=c_1$ and $\textrm{rk}((a_i)_{i \in X} \to (c_i)_{i \in X}) \le 1$ for all $X \in S$.
\end{theorem}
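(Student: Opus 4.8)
The plan is a compactness argument run in a first-order saturated elementary extension of $V$, followed by a transfer back into $V$ by Lemma \ref{ylimalli}, in the spirit of the proof of Theorem \ref{infamalg}. First, for each $X \in S$ I would produce a ``local witness''. Since $0,1 \in X$, $a_0 \neq a_1$, $b_0 = b_1$, Lemma \ref{dimthmspec} (applied to the restricted specialization $(a_i)_{i\in X} \to (b_i)_{i\in X}$, with the indices $1,2$ there played by $0,1$) yields $(c^X_i)_{i\in X}$ from $V$ with $(a_i)_{i\in X} \to (c^X_i)_{i\in X} \to (b_i)_{i\in X}$, $c^X_0 = c^X_1$, and $\textrm{dim}((a_i)_{i\in X}) - \textrm{dim}((c^X_i)_{i\in X}) = 1$. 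In particular $\textrm{rk}((a_i)_{i\in X} \to (c^X_i)_{i\in X}) \le 1$, so hypothesis (ii) of the theorem applies and forces $\textrm{rk}((a_i)_{i\in X'} \to (c^X_i)_{i\in X'}) \le 1$ for every $X' \in S$ with $X' \subseteq X$. This is precisely the coherence that will let the separate witnesses be glued by compactness.

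Next I would fix a saturated elementary extension $\mathfrak{V}$ of $V$ of cardinality $> |\kappa| + \aleph_0$, with kernel $K^{*}$ of $\textrm{exp}$, and consider the partial type $\Sigma((x_i)_{i<\kappa})$ over $\{a_i\}_{i<\kappa} \cup \{b_i\}_{i<\kappa} \cup \textrm{log}(\overline{\mathbb{Q}})$ consisting of:
\begin{enumerate}[(a)]
\item for each $C \in \mathcal{C}$, each $n$, and each $(j_1,\ldots,j_n) \in \kappa^n$ with $(a_{j_1},\ldots,a_{j_n}) \in C$, the formula $(x_{j_1},\ldots,x_{j_n}) \in C$;
\item for each $C \in \mathcal{C}$, each $n$, and each $(j_1,\ldots,j_n) \in \kappa^n$ with $(b_{j_1},\ldots,b_{j_n}) \notin C$, the formula $\neg((x_{j_1},\ldots,x_{j_n}) \in C)$;
\item $x_0 = x_1$;
\item for each $X \in S$ and each variety $Z$ defined over $\overline{\mathbb{Q}}$ of dimension $\le \textrm{dim}((a_i)_{i\in X}) - 2$, the formula $\neg(\textrm{exp}((x_i)_{i\in X}) \in Z)$;
\item for each finite $J \subseteq \kappa$, each $(q_j)_{j\in J}$ from $\mathbb{Q}$ and each $c_0 \in \textrm{span}(\textrm{log}(\overline{\mathbb{Q}}))$, the formula $\textrm{exp}\left(\sum_{j\in J} q_j x_j + c_0\right) = 1 \rightarrow \sum_{j\in J} q_j x_j + c_0 = \sum_{j\in J} q_j b_j + c_0$.
\end{enumerate}
Every clause is first-order: each $C \in \mathcal{C}$ is definable by a positive quantifier-free formula over $\emptyset$ by Lemma \ref{irredform}, and varieties over $\overline{\mathbb{Q}}$ are positive quantifier-free definable over $\emptyset$ in the expanded language; moreover $|\Sigma| \le |\kappa| + \aleph_0$ since $\mathcal{C}$ and the set of $\overline{\mathbb{Q}}$-varieties are countable. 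Clause (d) is the first-order surrogate for $\textrm{rk}((a_i)_{i\in X} \to (x_i)_{i\in X}) \le 1$: by Lemma \ref{dimensio} and Remark \ref{ranksama}, $\textrm{dim}((x_i)_{i\in X})$ equals the dimension of the locus of $\textrm{exp}((x_i)_{i\in X})$ over $\overline{\mathbb{Q}}$, and this locus is itself an $\overline{\mathbb{Q}}$-variety, so avoiding all $\overline{\mathbb{Q}}$-varieties of dimension $\le \textrm{dim}((a_i)_{i\in X}) - 2$ is equivalent to $\textrm{dim}((x_i)_{i\in X}) \ge \textrm{dim}((a_i)_{i\in X}) - 1$. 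Clause (e) is there to make the transfer step work.

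For finite satisfiability I would take a finite $\Sigma_0 \subseteq \Sigma$, note it mentions indices inside a finite $J_0 \subseteq \kappa$, use directedness and unboundedness of $S$ to pick $X^{*} \in S$ with $J_0 \subseteq X^{*}$, and check that $(c^{X^*}_i)_{i\in X^*}$, extended arbitrarily to all of $\kappa$, satisfies $\Sigma_0$: clauses (a)--(c) hold by the first step; clause (d) holds because for any $X \subseteq X^{*}$ occurring in $\Sigma_0$, hypothesis (ii) gives $\textrm{dim}((c^{X^*}_i)_{i\in X}) \ge \textrm{dim}((a_i)_{i\in X}) - 1$; and clause (e) holds because $(c^{X^*}_i)_{i\in X^*}$ and the $b_j$ all lie in $V$, so any value $\sum_j q_j c^{X^*}_j + c_0$ that lands in the kernel lies in $K = V \cap K^{*} \subseteq \textrm{log}(\overline{\mathbb{Q}})$, whence the linear set it determines is definable over $\emptyset$ (a union of sets in $\mathcal{C}$) and is preserved by the specialization $(c^{X^*}_i)_{i\in X^*} \to (b_i)_{i\in X^*}$, yielding the required equality. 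Hence $\Sigma$ is realized in $\mathfrak{V}$ by some $(c_i)_{i<\kappa}$; by clause (e), $\textrm{span}((c_i)_{i<\kappa}) \cap K^{*} \subseteq K$, so Lemma \ref{ylimalli} produces $(\tilde c_i)_{i<\kappa}$ in $V$ with the same quantifier-free type over $\emptyset$ as $(c_i)_{i<\kappa}$. Since clauses (a)--(d) only assert quantifier-free formulas over $\emptyset$ (for (d) one again uses that $\overline{\mathbb{Q}}$-varieties are positive quantifier-free definable and that $\textrm{dim}$ is computed via $\textrm{exp}$ by Lemma \ref{dimensio}), they survive the transfer, so $(a_i)_{i<\kappa} \to (\tilde c_i)_{i<\kappa} \to (b_i)_{i<\kappa}$, $\tilde c_0 = \tilde c_1$, and $\textrm{rk}((a_i)_{i\in X} \to (\tilde c_i)_{i\in X}) \le 1$ for all $X \in S$ --- which is the conclusion.

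I expect the delicate point to be exactly this last interaction: showing that the span-control clause (e) is simultaneously consistent with (a)--(d), so that the local witnesses genuinely realize each finite fragment, and that passing through Lemma \ref{ylimalli} does not destroy the two specializations. This is the same bookkeeping already carried out in the proof of Theorem \ref{infamalg}; the structural reason it works is that inside $V$ the kernel $K$ is contained in $\textrm{log}(\overline{\mathbb{Q}})$, so the linear sets arising from kernel relations are definable over $\emptyset$ and their ``kernel coordinates'' are pinned down by the specialization onto $(b_i)_{i<\kappa}$. For $\kappa$ finite the type $\Sigma$ is merely countable rather than of size $|\kappa|$, but the argument is unchanged.
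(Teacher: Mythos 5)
Your proposal is correct and takes essentially the same approach as the paper's proof: apply Lemma~\ref{dimthmspec} together with hypothesis~(ii) for finite-fragment witnesses, run a compactness argument in a saturated elementary extension, control the kernel by pinning down any accidental kernel values to those of the $b_i$, and transfer back into $V$ via Lemma~\ref{ylimalli}. The paper is terser---it does not spell out the partial type $\Sigma$ or the first-order encoding of the rank bound via avoidance of $\overline{\mathbb{Q}}$-varieties---but the structure and the key ingredients match yours exactly.
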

 
\begin{proof}
Let  $a_i, b_i \in V$ with $i < \kappa$, such that $a_0 \neq a_1$ and $b_0=b_1$.
Suppose $(a_i)_{i<\kappa} \to (b_i)_{i <\kappa}$ is a specialization.
If $S \subset \mathcal{P}_{<\omega}(\kappa)$ is such that $0,1 \in X$ for all $X \in S$, then, by Lemma \ref{dimthmspec}, for each $X \in S$, there is some sequence $(c_i)_{i \in X} \in V$ so that 
$$(a_i)_{i \in X} \to (c_i)_{i \in X} \to (b_i)_{i \in X},$$
$c_0=c_1$ and $\textrm{rk}((a_i)_{i \in X} \to (c_i)_{i \in X}) \le 1$.

Suppose now that $S$ is unbounded and directed and satisfies condition (ii) from the theorem.
By Compactness, there is a saturated elementary extension $\mathfrak{V}$ of $V$ and elements $c_i \in \mathfrak{V}$ for $i < \kappa$ such that $(a_i)_{i<\kappa} \to (c_i)_{i<\kappa} \to (b_i)_{i <\kappa}$, $c_0=c_1$ and $\textrm{rk}((a_i)_{i \in X} \to (c_i)_{i \in X}) \le 1$ for all $X \in S$.
Denote by $K$ the kernel of exp in $V$ and by $K^{*}$ the kernel of exp in  $\mathfrak{V}$.

We will show, using Compactness, that we may choose the $c_i$ so that $\textrm{span}(c_i)_{i < \kappa} \cap K^{*} \subset K$.
Then, by Lemma \ref{ylimalli}, we may assume that the sequence $(c_i)_{i<\kappa}$ is in $V$ and the theorem will follow.
  
Let $J \subset \kappa$ be finite.  
By Lemma \ref{dimthmspec}, there are $c_i' \in V$ such that $(a_i)_{i \in J} \to (c_i')_{i \in J} \to (b_i)_{i \in J}$
and $\textrm{rk}((a_i)_{i \in X} \to \textrm{rk}(c_i')_{i \in X}) \le 1$ for all $X \in S \cap \mathcal{P}(J)$.
If for some $I_0 \subseteq J$, we have $\textrm{exp}(\sum_{i \in I_0} q_i c_i')=1$, where $q_i \in \mathbb{Q}$, 
then, since the $c_i'$ are in $V$, we have $\sum_{i \in I_0} q_i c_i'=k$ for some $k \in K$.
As there is a specialization from the $c_i'$ to the $b_i$, we must have $\sum_{i \in I_0} q_i b_i=k$.
Thus, for any finite $J \subset \kappa$, we may choose the sequence $(c_i)_{i \in J}$ so that whenever $I_0 \subseteq J$ and  $\textrm{exp}(\sum_{i \in I_0} q_i c_i)=1$, then $\sum_{i \in I_0} q_i c_i=\sum_{i \in I_0} q_i b_i$.
Hence, by Compactness, we may choose the $c_i$ for $i<\kappa$ so that whenever $I_0 \subset \kappa$ is finite and $\textrm{exp}(\sum_{i \in I_0} q_i c_i)=1$, then $\sum_{i \in I_0} q_i c_i=\sum_{i \in J} q_i b_i$.
In particular, $\textrm{span}(c_i)_{i \in I} \cap K^{*} \subset K$.
\end{proof}
    
\subsection{Curves on the cover}

\begin{definition}
We say that an irreducible, one-dimensional PQF-closed set  $D$ on $V^n$ is a \emph{curve} on $V^n$.
Note that if $D$ is a curve on $V^n$, then $\textrm{exp}(D)$ is an algebraic curve on $F^n$.

For each $m$, define the \emph{closed sets} on  $D^m$  to be the restrictions of the PQF-closed sets on $V^{mn}$.
Again, we say that a closed set is \emph{irreducible} if it cannot be written as a union of two proper closed subsets.
\end{definition}

We first note that if $D \subset V^n$ is a curve, then each point $x \in D^m$ is also a point of $V^{nm}$ and the locus of $x$ on $D^m$ coincides with the locus of $x$ on $V^{nm}$.
It follows that also ranks coincide and that a map on $D$ is a specialization if and only if the corresponding map on $V$ is one.
In fact, if we assume $\textrm{exp}(D)$ is a smooth algebraic curve, then the theory developed for the cover nicely descends to $D$, and the results can be proved using basically the same arguments as in the case of the cover.
The smoothness is needed because it guarantees that the dimension theorem will hold (see \cite{Marker}).
 If $\textrm{exp}(D)$ is not smooth, then we might run into trouble proving (9) since the dimension theorem does not hold on an arbitrary curve.
The other axioms for a Zariski-like structure can be proved, however.

\end{document}